\newtheorem{theorem}{Theorem}[section]
\newtheorem{claim}[theorem]{Claim}
\newtheorem{conjecture}[theorem]{Conjecture}
\newtheorem{corollary}[theorem]{Corollary}
\newtheorem{lemma}[theorem]{Lemma}
\newtheorem{proposition}[theorem]{Proposition}
\newtheorem{remark}[theorem]{Remark}
\newcommand{\comb}[2]{\mbox{ $\left(\!\!\begin{array}{c} {#1} \\
{#2}\end{array}\!\!\right)$}}
\newcommand{\norm}[1]{\Vert {#1} \Vert}
\def\comp{{\circ}}
\def\N{\mathbb{N}}
\def\R{\mathbb{R}}
\def\C{\mathbb{C}}
\def\esc{\lambda}
\def\esc{\lambda}
\def\e{\varepsilon}
\def\RR{\tilde{R}}
\def\aa{\tilde{a}}
\def\bb{\tilde{b}}
\def\KK{\tilde{K}}
\def\GG{\tilde{G}}
\def\BB{\tilde{B}}
\def\Dif{{D}}
\def\Spec{{\rm Spec}\,}
\def\Re{{\rm Re}\,}
\def\L{\mathcal{L}}
\def\O{\mathcal{O}}
\def\U{\mathcal{U}}
\def\A{\mathcal{A}}
\def\CC{\kappa}
\def\o{o}
\newcommand{\Id}{\mathrm{Id}}
\newcommand{\CCC}{\mathcal{C}}
\title[Gevrey parabolic manifolds]{
Gevrey estimates for one dimensional parabolic invariant manifolds
of non-hyperbolic fixed points}
\author[Inmaculada~Baldom\'a, Ernest~Fontich and Pau~Mart\'{\i}n]{}
\subjclass{Primary: 37D10}
\keywords{Gevrey class, Parabolic Points, Invariant manifolds}
\email{immaculada.baldoma@upc.edu}
\email{fontich@ub.edu}
\email{p.martin@upc.edu}
\thanks{
I.B and P.M. have been partially supported by the Spanish
MINECO-FEDER Grant MTM2015-65715-P and the Catalan Grant 2014SGR504.
The work of E.F. has been partially supported by the Spanish
Government grant MTM2013-41168P and the Catalan Government grant
2014SGR-1145.}
\begin{document}

\begin{abstract}
    We study the Gevrey character of a natural parameterization of one dimensional
     invariant manifolds associated to a \emph{parabolic direction} of fixed points of analytic maps,
     that is, a direction associated with an eigenvalue equal to~$1$.
     We show that, under general hypotheses, these invariant manifolds are Gevrey
     with type related to some explicit constants. We provide examples of the optimality
     of our results as well as some applications to celestial mechanics, namely, the Sitnikov
     problem and the restricted planar three body problem.
\end{abstract}

\maketitle

% Enter the first author's name and address:
\centerline{\scshape Inmaculada Baldom\'a}
\medskip
{\footnotesize
% please put the address of the first author
 \centerline{Departament de Matem\`{a}tiques,}
\centerline{Universitat Polit\`{e}cnica de Catalunya,}
\centerline{Av. Diagonal 647, 08028 Barcelona, Spain}
}
\medskip

\centerline{\scshape Ernest Fontich}
{\footnotesize
% please put the address of the first author
 \centerline{Departament de Matem\`{a}tiques i Inform\`atica,}
\centerline{Universitat de Barcelona,}
\centerline{Gran Via 585,
08007, Barcelona, Spain} }
\medskip

\centerline{\scshape Pau Mart\'{\i}n}
{\footnotesize
% please put the address of the first author
 \centerline{Departament de Matem\`{a}tiques,}
\centerline{Universitat Polit\`{e}cnica de Catalunya}
\centerline{Ed.~C3, Jordi Girona 1--3, 08034 Barcelona, Spain}
}

\bigskip

% The name of the associate editor will be entered by an editorial staff
% "Communicated by the associate editor name" is not needed for special issue.
 \centerline{(Communicated by the associate editor name)}

\section{Introduction}

Let us consider a dynamical system defined through a map $F: \R^{m}
\to \R^{m}$ with a fixed point at the origin. To each invariant
subspace $E$ of $DF(0)$ one can try to identify its corresponding
counterpart for $F$, that is, a manifold tangent to $E$ at the
origin invariant by~$F$, if it exists. Of course, these invariant
manifolds need  not be unique, or even if they do exist, they can be
less regular than the map~$F$, depending on the resonance relations
in $\Spec DF(0)_{\mid E}$. In the case that $F$ is analytic or
$C^{\infty}$, one can even ask if there exists a \emph{formal}
invariant manifold tangent to~$E$, that is, a formal  power series
which solves at all orders an appropriate invariance equation.

One way to obtain manifolds invariant by~$F$ is by using the
parameterization method. A brief description is the following. If
$E\subset \R^{m}$ is a subspace of dimension~$n$, invariant by
$DF(0)$, one can try to find an invariant manifold by~$F$ tangent
to~$E$ at the origin as an embedding $K:B_{\rho} \subset \R^n \to
\R^m$ (here $B_{\rho}$ denotes the ball of radius~$\rho$) such that
$K(0) = 0$, $DK(0) \R^n = E$ and  a \emph{reparameterization}
$R:B_{\rho} \to \R^n$, $R(0) =0$, satisfying the \emph{invariance
equation}
\begin{equation}
\label{eq:inv_equation} F \circ K = K \circ R.
\end{equation}

Well known examples of invariant manifolds are the strong stable and
unstable manifolds, which, roughly speaking, are associated to the
eigenvalues $\lambda$ of $DF(0)$ such that $|\lambda| > \mu
>1$ and $|\lambda| < \nu <1 $, respectively, for given constants $\mu$ and $\nu$.
See, for instance, \cite{HirschP70,Irwin70,Irwin80}
and~\cite{CabreFL03a,HaroCFLM16,CabreFL05} and the references
therein. These manifolds are as regular as the map in a neighborhood
of the fixed point. In particular, analytic if so is the map. Their
expansions in power series are convergent.

When one considers invariant manifolds tangent to subspaces
associated to subsets of non-resonant eigenvalues the situation
becomes more interesting. The invariance equation can be solved at
all orders, due to the non-resonant character of the eigenvalues.
This solution provides a formal invariant manifold. In general, this
formal series corresponds to a regular meaningful object if one
imposes the non-resonant eigenvalues to be of modulus larger (resp.
smaller) than one. That is, when the non-resonant manifolds are
submanifolds of the strong unstable (resp. stable) manifold. See for
instance \cite{CabreFL03a}. If the map is analytic, these
non-resonant manifolds are also analytic and, again, their
expansions are convergent.

Here we consider the totally resonant case, that is, manifolds
tangent to subspaces associated to the eigenvalue $1$ and, thus,
submanifolds of the center manifold. We call these manifolds
\emph{parabolic}.

When the map is tangent to the identity at the fixed point, that is,
$DF(0) = \Id$, any subspace of $\R^{m}$ is invariant by $DF(0)$. In
order to identify the subspaces which are susceptible to have an
invariant manifold tangent to them it is necessary to pay attention
to the first next non-vanishing terms of the Taylor expansion of $F$
at the origin. This is the case considered in~\cite{BFdLM2007}, when
one looks for one dimensional manifolds. See also~\cite{McGehee73}.
In the latter, only analytic manifolds where considered, while the
former includes the case of finite differentiability. The former
also includes the construction of formal solutions of the invariance
equation~\eqref{eq:inv_equation}. Under the conditions
in~\cite{BFdLM2007}, if the map $F$ is analytic or $\CCC^{\infty}$,
the parabolic invariant manifolds exist and are $\CCC^{\infty}$ at
the fixed point. See also~\cite{Hakim98} and the
survey~\cite{Abate15} in the setting of complex dynamics.

In~\cite{BF2004} it is studied the case of $F$ analytic, tangent to
the identity and with invariant manifolds of dimension two or
greater. These manifolds are analytic in their domain, although in
general the fixed point is only at their boundary. In this case,
however, it is easy to see that in general there are no formal
solutions (in the sense of power series) of the invariance
equation~\eqref{eq:inv_equation}. In the same setting,
in~\cite{BFM2015b} it is shown that the invariant manifolds can be
approximated by sums of homogeneous functions of increasing order.

In the present paper we assume that $F$ is an analytic local
diffeomorphism in a neighborhood of the origin in $\R\times \R^d
\times \R^{d'}$ and satisfies
\begin{equation}\label{linearpartF}
DF(0) = \begin{pmatrix} 1& 0 & 0 \\
0 & \Id_{d} & 0 \\
0& 0 & C
\end{pmatrix}
\end{equation}
with $1 \not \in \Spec C$ and $\Id_{d}$ is the identity matrix in
$\R^{d}$. When $d=0$, that is, when $1$ is a simple eigenvalue of
$DF(0)$, this class of maps was studied in~\cite{BH08}. There the
authors proved that if the map $F$ has the form
\[
F:\begin{pmatrix} x \\ z \end{pmatrix} \in \R \times \R^{d'} \mapsto
\begin{pmatrix} x - a x^N + z \O_{N-1} + \O_{N+1} \\ Cz + \O_2
\end{pmatrix},
\]
where $\O_j$ stands for $\O(\|(x,z)\|^j)$,  with $a \neq 0$, $N\ge
2$,
 the invariance equation admits a formal solution $\hat K(t) = \sum_{k\ge 1} K_j t^j$,
$K_j \in \R^{1+d'}$, with some polynomial  reparameterization~$R$,
and that the series is $\alpha$-Gevrey with $\alpha = 1/(N-1)$, that
is, there exist constants $c_1,c_2>0$ such that
\[
\norm{K_j} \leq c_1 c_2^j j!^{\alpha},  \qquad j \ge 0.
\]
Furthermore, if $a >0$ and $\Spec C \subset \{ z\in \C\mid |z| \ge
1, \, z\neq 1\}$,  there is an analytic solution $K$ of the
invariance equation, defined in some convex set $V$ with
$0\in\partial V$, that is $\alpha$-Gevrey asymptotic to $\hat K$,
that is, there exist constants $c_1,c_2>0$ such that
$$
\left \| K(t) - \sum_{j=1}^{n-1} K_j t^j \right \| \leq c_1 c_2^n
n!^{\alpha} |t|^n, \qquad  n\ge 1, \quad t\in V.
$$

Here we generalize these results to the case $d>0$, $d'\ge 0$. That
is, if the map $F$ has the linear part~\eqref{linearpartF} and
certain conditions on the nonlinear terms are met (see
Theorem~\ref{formal_theorem}), the invariance
equation~\eqref{eq:inv_equation} for the map $F$ admits a formal
solution~$\hat K (t)$, which is $\gamma$-Gevrey for a precise
$\gamma$ (defined in~\eqref{defalpha}). We provide examples for
which this value of $\gamma$ is sharp, that is, $\hat K (t)$ is not
$\gamma'$-Gevrey for any $0\le\gamma'< \gamma$. These conditions can
be seen as non-resonances, because they allow to solve some
cohomological equations (see also
Claim~\ref{claim:autonomousflowsanalyticmanifolds}). Also they would
imply the existence of a \emph{characteristic direction}, if the map
was truly tangent to the identity at the fixed point.

Adding some additional conditions (see
Theorem~\ref{mtexistencesolution}), we also prove that there is a
true invariant manifold given by an analytic parameterization $K$
which is $\gamma$-Gevrey asymptotic the the formal series $\tilde K$
in some complex convex set with $0$ at its boundary. We will refer to this
manifold as a parabolic manifold and we notice that the information
about its internal dynamics is given by $R(t)$ which, in our case,
turns out to be a polynomial. Depending on $R$ the parabolic
manifold may behave as a (weak) stable manifold (in the sense that
the iterates of its points converge to de origin) or a (weak)
unstable manifold. In those cases we will denote them by parabolic
stable/unstable manifolds.

Of course, the conditions that allow the existence of a formal
solution are weaker than the ones we need to impose in order to have
a true invariant manifold. However, we prove that if the map
possesses a one dimensional parabolic stable invariant manifold to
the origin, tangent to a particular direction associated to an
eigenvalue equal to~$1$, and it is non-degenerate (in the sense of
Proposition~\ref{prop:FormaF}), then there are suitable coordinates
in which the map satisfies our conditions (listed in~\eqref{system}
and hypotheses below).

Our results provide upper bounds for the coefficients of the
asymptotic expansion of the invariant manifold. The existence of
lower bounds remains open. Although we provide examples that show
the optimality of our results, we also prove that if the map is the
time one map of an autonomous analytic vector field, satisfying our
hypotheses, the invariant manifold, when written as a graph, extends
analytically to a neighborhood of the origin (see
Claim~\ref{claim:autonomousflowsanalyticmanifolds}). That is, the
invariant manifolds can be more regular than what we claim. This is
no longer true for the stroboscopic Poincar\'e map of time-periodic
equations (Claim~\ref{example:optimal_Gevrey_N_equal_M}). However,
although obtaining lower bounds is out of the scope of the present
work, we show in Proposition~\ref{prop:trickylowerbounds} that the
conditions to obtain such lower bounds cannot depend on a finite
number of coefficients of the Taylor expansion of the map~$F$.

An important consequence of our present results is the Gevrey
character of some invariant manifolds in some problems of Celestial
Mechanics. In several instances of the restricted three body, like
the Sitnikov problem or the restricted planar three body problem,
the parabolic infinity is foliated by periodic orbits. The
associated stroboscopic Poincar\'e map satisfies the conditions of
our existence result (Theorem~\ref{mtexistencesolution}) with
$d'=0$, which implies that the manifolds are at least $1/3$-Gevrey
at the origin. See Section~\ref{sec:celestial_mechanics} for more
details. Sim\'o and Mart{\'{\i}}nez announced in 2009
\cite{MartinezS09} that, in the case of the Sitnikov problem, the
manifolds are precisely $1/3$-Gevrey, which would imply the
optimality of our result in the sense that these manifolds are not
more regular. The numerical experiments in~\cite{MartinezS14}
strongly support the same claim for the restricted circular planar
three body problem. These computations and the example we provide in
Claim~\ref{example:optimal_Gevrey_N_equal_M} move us to conjecture
that the invariant manifolds of infinity of the restricted three
body problem are exactly $1/3$-Gevrey (see
Conjecture~\ref{conjecture} for the precise statement).

The structure of the paper  is as follows. In
Section~\ref{sec:notation} we introduce the definitions and
notations we will use along the paper. In
Section~\ref{sec:Main_Results} we collect the main results of the
paper. Section~\ref{sec:examples} is devoted to present some
examples that show that the Gevrey order we find is optimal. We also
show how our theorems apply to the restricted three body problem.
The rest of the paper contains the proofs of the results on
Section~\ref{sec:Main_Results}. In Section~\ref{sec:formal} we
obtain the formal solution of the invariance equation. Its Gevrey
character is studied in Section~\ref{sec:Gevrey}. The existence of
the true manifold is proved in Section~\ref{sec:existence}. The
appendix contains the proofs of Propositions~\ref{prop:FormaF}
and~\ref{prop:trickylowerbounds}.

\section{Set up and notation}
\label{sec:notation} Let $\U\subset \R\times \R^{d} \times \R^{d'}$ be an
 open neighborhood of $0= (0,0,0)$. We consider $F:  {\U}\longrightarrow \R\times
\R^{d} \times \R^{d'}$, the real analytic maps
%
%
%\begin{equation}
%\label{map}
%\begin{array}{llcl}
%F:  {\U} \subset \R\times \R^{d} \times \R^{d'}  \longrightarrow
%\R\times \R^{d} \times \R^{d'}
%\\
%   & u = (x,y,z)   & \mapsto & F(x,y,z)
%   = (f(x,y,z),g(x,y,z),h(x,y,z))
%\end{array}
%\end{equation}
defined by
\begin{equation}\label{system}
F \begin{pmatrix} x \\ y \\ z
\end{pmatrix}
=
\begin{pmatrix} x - a x^N  +   f_N(x,y,z) + f_{\geq N+1} (x,y,z) \\
y  + x^{M-1} B_1 y+ x^{M-1} B_2 z+ g_{M}(x,y,z)+ g_{\geq
M+1}(x,y,z) \\
C z  +  h_{\geq 2}(x,y,z)
\end{pmatrix}
%\left\{\begin{array}{lrl}
%\bar x = &  x - a x^N & +  \hat f_N(x,y,z) + f_{\geq N+1} (x,y,z), \\
%\bar y = &  y & + x^{M-1} B_1 y+ x^{M-1} B_2 z+\hat g_{M}(x,y,z)+
%g_{\geq M+1}(x,y,z),
%\\
%\bar z = &  C z & +  h_{\geq 2}(x,y,z),
%\end{array}
%\right.
\end{equation}
where:
\begin{itemize}
\item $N,M\geq 2$ are integer numbers;
\item the constant $a$ is non-zero;
\item $1\not \in \Spec C$;
%, and the $d\times d$ matrix $A$ is non-singular;
\item $f_N(x,y,z)$ is a homogeneous polynomial of degree $N$
such that $ f_N(x,0,0)= 0$. We introduce the notation
\[
v= \frac{1}{(N-1)!}
\partial^{N-1}_x
\partial_y f_N(0,0,0)\in \R^d, \quad   w= \frac{1}{(N-1)!}
\partial^{N-1}_x
\partial_z  f_N(0,0,0)\in \R^{d'},
\]
so that $\partial_y  f_N(x,0,0) = x^{N-1} v^\top$ and
$\partial_z  f_N(x,0,0) = x^{N-1} w^\top$;

\item $ g_{M}(x,y,z)$ is a homogeneous polynomial of degree $M$ such that
$ g_{M}(x,0,0)=0$, $\Dif_y  g_{M}(x,0,0)=0$ and $\Dif_z
g_{M}(x,0,0)=0$;

\item $f_{\geq N+1}$ has order $N+1$ (the function and its derivatives vanish
up to order $N$ at $(0,0,0)$), $g_{\geq M+1}$ has order~$M+1$ and
$h_{\geq 2}$ has order~$2$.
\end{itemize}
Since $F$ is real analytic, it can be extended to a complex
neighborhood ${\mathcal U}_\C$ of ${\mathcal U}$. For simplicity, we
will denote also  by $F$ this complex extension.
%
%By ``real-analytic'' we mean that $F$ can be extended to a
%holomorphic function defined in a complex neighborhood ${\mathcal
%U}_\C$ of ${\mathcal U}$, that is $F_\C : {\mathcal U}_\C \subset
%\C\times \C^{d} \times \C^{d'}\longrightarrow \C\times \C^{d}\times
%\C^{d'}$. For the sake of simplicity, we will also use the notation
%$F$ for its complexification $F_\C$.

We introduce the following  notational conventions  we use
throughout the paper. We denote by $\hat{W}(t)= \sum_{k\geq 0} W_k
t^k$ any formal series in~$t$ and if $W(t)$ is a map, we denote by
$W_k=\frac{1}{k!} D^k W(0)$, if the derivatives are defined. The expressions
$W_{\leq l}$, $W_{\geq l+1}$, etc. will mean $\sum_{k=0}^{l} W_k
t^k$, $\sum_{k\geq l+1} W_k t^k$, etc., and we will use them without
further mention. The projection over the $x$, $y$ or $z$-component
is denoted by $\pi^x$, $\pi^y$ and $\pi^z$. If $W(\cdot) \in
\C^{1+d+d'}$ (or if $W$ is a map taking values in $\C^{1+d+d'}$, or
a power series with coefficients in $\C^{1+d+d'}$), we write $W^x=
\pi^x W$, $W^y= \pi^y W$ and $W^z= \pi^z W$. We also use $\pi^{x,y}W
= W^{x,y}=(W^x,W^y)$, or any other combination of the variables.

We finally introduce the constants
\begin{equation}\label{defalpha}
\alpha=\frac{1}{N-1} \qquad \text{and}  \qquad \gamma=
\begin{cases} \displaystyle\frac{1}{N-1}, &  \text{if $N\leq M$},\\
\displaystyle\frac{1}{N-M}, & \text{if $N>M$},
 \end{cases}
\end{equation}
which will play a capital role in our results.

\section{Main Results}
\label{sec:Main_Results} We start dealing with formal solutions of
the invariance equation  $F\circ K = K\circ R$.  We provide
conditions that ensure the existence of a formal solution as a power
series, which turns out to be $\gamma$-Gevrey.

\begin{theorem}\label{formal_theorem}
Let $F$ be a map of the form~\eqref{system}.
If the matrix $B_1$ satisfies that
\begin{itemize}
\item if $M<N$, the matrix $B_1$ is invertible,
\item if $M=N$, the matrices $B_1+la\Id$ are invertible for $l\geq 2$,
\item if $M>N$, no conditions are needed for $B_1$,
\end{itemize}
then there exist formal
power series $\hat R(t)=\sum_{n\geq 1} R_n t^n \in \R[[t]]$, $\hat
K(t) = \sum_{n\geq 0} K_n t^n \in \R[[t]]^{1+d+d'}$ with $K_0=
(0,0,0)$ and $K_1= (1,0,0)^\top$ such that
\begin{equation}\label{formal_inv_cond}
F\comp \hat K = \hat K\comp \hat R
\end{equation}
(in the sense of formal series composition).

More precisely, under these conditions, there exists a unique polynomial $R(t)= t -
a t^N + b t^{2N-1}$ such that for any $c\in \mathbb{R}$, there is a
unique formal power series $ \hat K(t) = \sum_{n\geq 1} K_n t^n \in
\R[[t]]^{1+d+d'}$ with $K_0= (0,0,0)^\top$, $K_1= (1,0,0)^\top$ and
$K_{N}^x=c$, satisfying~\eqref{formal_inv_cond}.

This expansion is $\gamma$-Gevrey, that is, there exist constants
$c_1,c_2>0$ such that
\[
\norm{K_n} \leq c_1 c_2^n n!^{\gamma}, \qquad n\ge 0,
\]
where $\norm{\cdot}$ is a norm in $\R^{1+d+d'}$.
\end{theorem}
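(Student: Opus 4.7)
The plan is to construct $\hat K$ and $\hat R$ inductively, matching coefficients of $t^n$ on both sides of $F\circ \hat K=\hat K\circ \hat R$ and identifying, at each step, which coefficient the equation determines. The linear part $DF(0)=\mathrm{diag}(1,\Id_d,C)$ contributes on the right-hand side the same $K_n$-term as does $\hat K\circ \hat R$ (via $[\hat R^n]_n=1$), so after cancelling $K_n$ each equation becomes a linear equation for a previously undetermined unknown.

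Concretely, I would analyze the three components as follows. The $z$-equation at order $n$ isolates $(C-\Id)K_n^z$, invertible since $1\notin\Spec C$. For the $y$-equation, the term $\hat K^x(t)^{M-1}B_1\hat K^y(t)$ contributes $B_1 K_n^y$ at order $n+M-1$, while the left-hand side composition contributes $K_n^y\,[\hat R^n]_{n+M-1}$: this last coefficient equals $-an$ when $M=N$ (from $\hat R(t)^n=t^n-an\,t^{n+N-1}+\cdots$) and $0$ when $M<N$. The resulting operator acting on $K_n^y$ is therefore $B_1$ in the case $M<N$ and $B_1+an\,\Id$ in the case $M=N$, whence the stated hypotheses. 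For $M>N$ the $B_1$-term is of too high order to reach step $n+N-1$, and the $y$-equation at that step instead isolates $-an\,K_n^y$, invertible for every $n\ge 2$. The $x$-equation at order $n+N-1$ isolates $a(N-n)K_n^x-v^\top K_n^y-w^\top K_n^z+R_{n+N-1}$: for $n\ne N$ one sets $R_{n+N-1}=0$ and solves for $K_n^x$ given the already-found $K_n^y,K_n^z$; for $n=N$ the leading coefficient $a(N-n)$ vanishes, so $K_N^x=c$ remains free and the equation instead fixes $b=R_{2N-1}$. The analogous analysis at orders $n<N$ of the $x$-equation forces $R_n=0$, and the consistency of the induction ordering (knowing $K_n^y$ and $K_n^z$ before $K_n^x$) follows by comparing the orders at which each coefficient first enters each equation.

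For the Gevrey estimate, my approach is an induction bound $\|K_j\|\le c_1 c_2^j\,j!^\gamma$. The inverse of the operator identified above has norm $O(1)$ when $M<N$ and $O(1/n)$ when $M\ge N$, via $(B_1+an\,\Id)^{-1}$, $1/(an)$ or $1/(a(N-n))$. The right-hand side is built from Fa\`a di Bruno sums over $f_N(\hat K)$, $g_M(\hat K)$, $h_{\ge 2}(\hat K),\ldots$ and from convolutions with the polynomial $\hat R$ (producing factors $[\hat R^k]_n$ controlled by binomials $\binom{k}{j}$), and each such term is estimated by the induction hypothesis. The $1/n$ gain is exactly what converts the naive Gevrey exponent $1$ into $\gamma=1/(N-1)$, mimicking the one-dimensional computation of~\cite{BH08}, while its absence when $N>M$ leaves the exponent at $\gamma=1/(N-M)$, matching~\eqref{defalpha}. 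The hardest part will be the combinatorial bookkeeping: choosing $c_1,c_2$ so that all Fa\`a di Bruno coefficients and binomials are absorbed simultaneously at every order and in every component. The cleanest remedy would be a majorant-series argument in which the vector coefficients are replaced by scalar majorants satisfying a related functional equation whose explicit solution can be shown to be Gevrey of the claimed order.
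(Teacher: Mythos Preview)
Your approach matches the paper's. The formal construction (Proposition~\ref{construction}) proceeds exactly as you describe: the $z$-equation at order $l$ determines $K_l^z$ via $(C-\Id)^{-1}$, the $y$-equation at order $l+L-1$ with $L=\min\{M,N\}$ determines $K_l^y$ via the operator $\A_l$ you identified, and the $x$-equation at order $l+N-1$ determines either $K_l^x$ (for $l\ne N$) or $b=R_{2N-1}$ (for $l=N$).

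For the Gevrey bound the paper also runs the induction $\|K_l\|\le \mu_0\, l!^\gamma$, but inserts two preparatory devices that replace your ``choose $c_1,c_2$'' and majorant-series suggestions with something concrete: a polynomial change of variables (Lemma~\ref{lemmaN}) forcing $K_l=0$ for $2\le l\le N-1$, so that every Fa\`a di Bruno index is either $1$ or $\ge N$ and the combinatorial sums become the explicit $J^1_{k,\nu},J^2_{k,\nu}$ of Lemmas~\ref{technicallemmas}--\ref{lem:J2knu}; and a rescaling by a large $\lambda$ (Lemma~\ref{scaling}) that makes the effective $a$ small and validates the bound for all $l\le l_0$ before the induction starts. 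One heuristic of yours to amend: the exponent $1/(N-1)$ is \emph{not} produced by the $1/n$ gain---the $z$-component has operator $(C-\Id)^{-1}=O(1)$ yet is still $1/(N-1)$-Gevrey. The exponent comes from the step size $N-1$ built into $R(t)=t-at^N+\cdots$ (this is why Lemmas~\ref{technicallemmas}--\ref{lem:J2knu} need $\beta\ge 1/(N-1)$); the $1/n$ gain when $M\ge N$ is used only to absorb the linear factor $(\nu-mN+1)$ appearing in Lemma~\ref{technicallemmas}. When $M<N$ the $y$-equation lives at order $l+M-1$, and the estimate $(l+M-N)!^{\gamma}(l+M-N)\le l!^{\gamma}$ is what forces $\gamma\ge 1/(N-M)$.
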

We prove Theorem~\ref{formal_theorem} along
Sections~\ref{sec:formal} and~\ref{sec:Gevrey}. First, in
Proposition~\ref{construction} we prove the existence of the formal
solution of~\eqref{formal_inv_cond} and provide formulas to compute
it. Then, with the aid of some technical lemmas, we prove in
Proposition~\ref{gevprop} that this formal solution is
$\gamma$-Gevrey.

The following proposition emphasizes the conditions on our map given
by~\eqref{system} are not too restrictive when considering
parabolic-hyperbolic fixed points.

\begin{proposition}\label{prop:FormaF}
Let $\mathcal{F}:\mathcal{U}\to \R\times \R^d \times \R^{d'}$ be a real analytic map of the form
$$
\mathcal{F}(x,y,z) = (x, y , C z)+ \mathcal{N}(x,y,z), \qquad \mathcal{N}(x,y,z) =\O(\|(x,y,z)\|^2 ),
$$
with $1\not \in \Spec C$, having an invariant curve associated to the origin of the form
$(y,z) = \varphi(x)$. Assume that there exist $N\ge 2$ and $a\neq 0$
such that
\begin{equation}\label{hipprop}
\mathcal{N}^x(x,\varphi(x))= - a x^N +\O(|x|^{N+1})
\end{equation}
and that $\varphi$ is $\mathcal{C}^r$ with $r\geq N$. Then, by means
of changes of variables and a blow up, $\mathcal{F}$ can be expressed
as in the form~\eqref{system} for some $M\ge 2$.
\end{proposition}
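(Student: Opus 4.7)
The plan is to combine a polynomial straightening of the invariant curve, a finite normal-form reduction, and a weighted blow-up in the $y$-direction to produce the structure of \eqref{system}.

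\textbf{Step 1 (straightening).} I first let $P(x)=(P^y(x),P^z(x))$ be the Taylor polynomial of $\varphi$ at the origin of degree $N$ (well-defined since $r\ge N$) and introduce the analytic change of variables $T:(x,y,z)\mapsto(x,y+P^y(x),z+P^z(x))$. In the new coordinates the invariant curve sits at $(y,z)=\varphi-P=o(|x|^N)$, and setting $\mathcal{F}_1:=T^{-1}\circ\mathcal{F}\circ T$, Taylor expansion around $(y,z)=0$ combined with the invariance relation for $\varphi$ and hypothesis~\eqref{hipprop} will give
\[
\mathcal{F}_1^x(x,0,0)=x-ax^N+O(|x|^{N+1}),\qquad \mathcal{F}_1^{y,z}(x,0,0)=O(|x|^{N+1}).
\]
Analyticity in $x$ alone then forces the corresponding Taylor coefficients to vanish to the stated orders.

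\textbf{Step 2 (partial normal form).} Since $1\notin\Spec C$, for every integer $c\ge 1$ only finitely many eigenvalues $\lambda\in\Spec C$ can satisfy $\lambda^c=1$. A finite sequence of analytic, near-identity, polynomial changes of variables will then eliminate, to any prescribed finite order, the non-resonant $z$-involving monomials in the $x$- and $y$-components of $\mathcal{F}_1$. The specific monomials to be killed are exactly those that would produce negative powers of $x$ after the blow-up of Step~3; I denote the outcome by $\mathcal{F}_2$.

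\textbf{Step 3 (weighted blow-up and extraction of $M$).} The next step introduces $\pi:(x,\bar y,z)\mapsto(x,x^{N-1}\bar y,z)$ and defines $\mathcal{F}_3$ implicitly by $\pi\circ\mathcal{F}_3=\mathcal{F}_2\circ\pi$, so that $\mathcal{F}_3^{\bar y}=(\mathcal{F}_2^y\circ\pi)/(\mathcal{F}_3^x)^{N-1}$. The divisibility arranged in Step~2 ensures this defines an analytic map at the origin. Under $\pi$, a monomial $x^ay^bz^c$ becomes $x^{a+(N-1)b}\bar y^bz^c$, of $(x,\bar y,z)$-degree $a+Nb+c$, so every monomial in $\mathcal{F}_3^x-x+ax^N$ has degree $\ge N$; the pure $x^N$ coefficient is $-a$ and the remaining degree-$N$ monomials vanish at $\bar y=z=0$, matching the prescription on $f_N$. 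I will then set $M\ge 2$ to be the smallest integer such that $\mathcal{F}_3^{\bar y}-\bar y$ carries a nonzero Taylor coefficient of degree $M$ in $(x,\bar y,z)$; the coefficients of $x^{M-1}\bar y$ and $x^{M-1}z$ in this degree-$M$ part give $B_1$ and $B_2$, with the remainder absorbed into $g_M$, whose vanishing properties follow by construction. The $z$-component needs no further work: $\mathcal{F}_3^z=Cz+h_{\ge 2}$ automatically from analyticity and the unaltered linear part.

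\textbf{Main obstacle.} The technical heart of the argument is the coordination of Steps~2 and~3: one must identify in advance precisely which non-resonant monomials must be killed so that the blow-up produces an analytic map near the origin, and then verify that the surviving structure matches all the detailed conditions on $f_N$ and $g_M$ in \eqref{system}. The assumption $1\notin\Spec C$ is essential to ensure that the monomials in question are non-resonant. The finite smoothness of $\varphi$ is not a real obstacle, as only its Taylor coefficients up to degree $N$ enter the construction.
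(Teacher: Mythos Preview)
Your Step~2 contains a genuine gap. To make the blow-up $\pi:(x,\bar y,z)\mapsto(x,x^{N-1}\bar y,z)$ produce an analytic map, you must remove from the $y$-component every monomial $x^a z^c$ with $a\le N-2$ and $|c|\ge 1$ (and similarly low-degree $z$-monomials from the $x$-component). The resonance obstruction to killing $x^a z^c$ in a component with eigenvalue $1$ is precisely $\prod_i \lambda_i^{c_i}=1$, where the $\lambda_i$ are the eigenvalues of $C$. The hypothesis $1\notin\Spec C$ does \emph{not} rule this out: take $d'=1$, $C=-1$ and a $z^2$ term in the $y$-component, or $d'=2$ with eigenvalues $2$ and $1/2$ and a $z_1 z_2$ term. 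Your sentence ``for every integer $c\ge 1$ only finitely many eigenvalues $\lambda\in\Spec C$ can satisfy $\lambda^c=1$'' is true but does not help; a single such resonance leaves an irremovable monomial which, after your blow-up, becomes $x^{-(N-1)}z^c$ and destroys analyticity of $\mathcal{F}_3^{\bar y}$.

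The paper sidesteps this by a different organization: it performs the normal-form reduction in the $z$-component instead (arranging $\mathcal{N}^z(x,y,0)=\O(\|(x,y)\|^N)$, whose only obstruction is $\lambda_i=1$ and therefore never occurs), and then blows up \emph{both} $y$ and $z$, setting $\bar y=u^m v$, $\bar z=u^n w$ with $m,n$ chosen a posteriori. Blowing up $z$ converts a resonant $z^c$ term in the $y$-component into $u^{n|c|-m}w^c$, which is harmless once $2n\ge m$. The paper also straightens with the degree-$r$ Taylor polynomial of $\varphi$ (not degree $N$), giving extra room to absorb the $\o(|u|^{r-m})$ errors after division by $u^m$. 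Your scheme can be repaired along these lines, but as written Step~2 fails under the stated hypotheses.
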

The proof of this result is elementary. We defer it to
Appendix~\ref{A1}.

The following result assures that, under additional conditions, the
formal expansion $\hat K$ given by Theorem~\ref{formal_theorem} is
the asymptotic series of a true solution of the invariance equation,
analytic in some domain with $0$ at its boundary.
\begin{theorem}\label{mtexistencesolution}
Let $F$ be a map of the form~\eqref{system}. Assume that $a>0$ and
\begin{itemize}
\item If $M\geq N$, the matrix $C$ satisfies $\Spec C \subset \{ z\in \C : |z|\geq 1\}\backslash\{1\}$.
\item If $M<N$, the matrix $C$ satisfies $\Spec C \subset \{ z\in \C : |z|>1\}$ and the matrix
$B_1$ is such that $\Spec B_1 \subset \{ z\in \C : \Re z >0\}$.
\end{itemize}
Then, for any $0<\beta<\alpha\pi$, there exist $\rho$ small enough and a real analytic function $K$ defined on the open
sector
\begin{equation}
\label{def:sector} S= S(\beta,\rho)=\{ t= r e^{{\rm i} \varphi} \in
\C\ : \ 0<r<\rho, \; |\varphi|< \beta/2\}
\end{equation}
such that $K$ is a solution of the invariance equation $F\circ K= K\circ R$.

Moreover, K is
$\gamma$-Gevrey asymptotic to the $\gamma$-Gevrey formal solution $\hat K$.
That is, for any $0<\bar \beta <\beta$ and $0<\bar \rho <\rho$, there exist constants $c_1,c_2$ such that, for any $n\in \mathbb{N}$,
$$
\left \| K(t) - \sum_{j=1}^{n-1} K_j t^j \right \| \leq c_1 c_2^n n!^{\gamma} |t|^n,
$$
for all $t\in \bar{S}_1:=\{t=r e^{{\rm i} \varphi} \in \C\ : \ 0<r\leq \bar \rho, \; |\varphi|\leq \bar  \beta/2\}$.

In particular, $K$ can be extended to a $\mathcal{C}^{\infty}$ function in $[0,\rho)$.
\end{theorem}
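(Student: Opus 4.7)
The plan is to prove Theorem~\ref{mtexistencesolution} in two steps: first construct a genuine analytic solution $K$ on the sector $S(\beta,\rho)$, then verify it is $\gamma$-Gevrey asymptotic to the formal solution $\hat K$ supplied by Theorem~\ref{formal_theorem}. For the existence part I would fix a large integer $n_0$, set $K = K^{[n_0]} + \Delta$ with $K^{[n_0]}(t) = \sum_{j=1}^{n_0} K_j t^j$, substitute into $F\circ K = K\circ R$ and expand $F$ at $K^{[n_0]}$. This yields a functional equation of the form
\[
\mathcal{L}\Delta := \Delta\circ R - DF(K^{[n_0]})\,\Delta = E(t) + \mathcal{N}(\Delta),
\]
where $E = F\circ K^{[n_0]} - K^{[n_0]}\circ R$ vanishes to order $n_0+1$ at $t=0$ by formal invariance, and $\mathcal{N}(\Delta) = O(\|\Delta\|^2)$.

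The heart of the argument is the inversion of $\mathcal{L}$ as a bounded operator on a Banach space of analytic functions on $S(\beta,\rho)$ with the weighted norm $\|\Delta\|_\ell = \sup_{t\in S}|t|^{-\ell}\|\Delta(t)\|$, with $\ell$ close to $n_0+1$. Splitting by components, on $\pi^x$ the operator reduces to $\Delta^x\circ R - \Delta^x$ plus lower order terms and is inverted by the forward telescoping series $-\sum_{k\ge 0}(E^x+\mathcal{N}^x)\circ R^k$; convergence holds because $a>0$ and the choice $\beta<\alpha\pi=\pi/(N-1)$ embeds $S$ inside an attracting petal of $R$, where a Fatou-type estimate $|\pi^x R^k(t)|\lesssim (k+|t|^{1-N})^{-\alpha}$ applies. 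On $\pi^z$ the leading operator is $\Delta^z\circ R - C\Delta^z$, inverted by the backward geometric series $\sum_{k\ge 1} C^{-k}(E^z+\mathcal{N}^z)\circ R^{-k}$, convergent thanks to $\Spec C\subset\{|z|\ge 1\}\setminus\{1\}$ combined with the parabolic gain coming from iterating $R^{-1}$. On $\pi^y$ the leading operator is $\Delta^y\circ R - (\Id + t^{M-1}B_1)\Delta^y$; when $M\ge N$ the perturbation $t^{M-1}B_1$ is subdominant with respect to the parabolic drift and the same forward telescoping works, while when $M<N$ the hypothesis $\Re\Spec B_1>0$ makes $\Id+t^{M-1}B_1$ expanding along orbits, so a backward telescoping sum involving $\prod_j(\Id+(R^j(t))^{M-1}B_1)^{-1}$ converges. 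Assembling these partial inverses and using Cauchy estimates for $\mathcal{N}$, a contraction mapping argument in a small ball of $\|\cdot\|_\ell$ yields $\Delta$, hence an analytic solution $K$ on $S(\beta,\rho)$.

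For the $\gamma$-Gevrey asymptotic statement, I would apply the above construction with varying $n_0 = n$, producing solutions $K^{(n)} = K^{[n]}+\Delta^{(n)}$ which, by uniqueness of the fixed point in nested balls, glue into a single analytic $K$ on $S$. Tracking the dependence on $n$ of the source term $E$ via the $\gamma$-Gevrey bound $\|K_j\|\le c_1 c_2^j j!^\gamma$ from Theorem~\ref{formal_theorem}, together with the norm of $\mathcal{L}^{-1}$, yields $\|\Delta^{(n)}(t)\|\le c_1 c_2^n n!^\gamma |t|^n$ on $\bar S_1$, which is exactly the claimed Gevrey asymptotic; the $\mathcal{C}^\infty$ extension to $[0,\rho)$ is then standard. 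The main obstacle is precisely this quantitative inversion of $\mathcal{L}$ uniformly in $n$, with operator bounds matching the Gevrey rate $n!^\gamma$ rather than the geometric rate of a naive contraction: it requires careful bookkeeping of the parabolic iterate estimates and of how they interact with the three spectral modes, and at the borderline $N=M$, where the $y$-component contributes to the Gevrey index at the same rate as the $x$-component, the weighted norms must be chosen with particular care to preserve the sharp exponent $\gamma$.
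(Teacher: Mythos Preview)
Your overall architecture---truncate the formal solution, set up a linear operator $\mathcal{L}$ on weighted spaces, invert it componentwise, close by contraction---is reasonable, but it differs in an essential way from the paper's and contains one genuine error.

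\textbf{The error.} Your inversion of the $\pi^z$-block via the backward series $\sum_{k\ge 1} C^{-k}(\cdot)\circ R^{-k}$ cannot work on the sector $S(\beta,\rho)$. Since $a>0$ and $\beta<\alpha\pi$, the sector is an \emph{attracting} petal for $R$; the inverse iterates $R^{-k}(t)$ leave $S$ after finitely many steps, so the composition $(\cdot)\circ R^{-k}$ is not even defined. There is no ``parabolic gain from iterating $R^{-1}$'' here. The $z$-equation $C\Delta^z-\Delta^z\circ R = -T^z$ must be solved by the \emph{forward} series $\Delta^z=-\sum_{j\ge 0}C^{-(j+1)}T^z\circ R^j$, whose convergence then hinges on controlling $\|C^{-j}\|$ against the polynomial decay of $|R^j(t)|^\ell$. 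When $M\ge N$ and $\Spec C$ meets the unit circle this is delicate in your polynomial-weight framework (one must choose a norm making $\|C^{-j}\|$ grow only polynomially), whereas it is straightforward in the paper's setup. Similarly, for $M<N$ the paper inverts the $y$-block by a \emph{forward} sum with the products $\prod_{i=0}^j(\Id+B_1[K_e\circ R^i]^{M-1})^{-1}$, exploiting $\Re\Spec B_1>0$ to keep these products bounded; your ``backward'' language for this block is at best confusing.

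\textbf{The strategic difference.} The paper does not truncate at a finite order and track constants uniformly in $n$. Instead it invokes the Gevrey Borel--Ritt theorem to produce a single analytic $K_e$ on $S(\beta,\rho)$ which is already $\gamma$-Gevrey asymptotic to $\hat K$; the defect $E=F\circ K_e-K_e\circ R$ is then \emph{exponentially small}, $\|E(t)\|\le c_0\exp(-c|t|^{-1/\gamma})$. The correction $H$ is sought in Banach spaces with the exponentially weighted norm $\|H\|_\ell=\sup|t|^{-\ell}e^{c|t|^{-1/\gamma}}\|H(t)\|$. In this norm, for $M\ge N$, the map $H\mapsto H\circ R$ is itself a strict contraction (this is where the Gevrey exponent enters), and a direct fixed point on $H=\hat C^{-1}[H\circ R-E-\mathcal{N}(H)]$ suffices. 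For $M<N$ that contraction fails, and the paper instead constructs an explicit right inverse $\mathcal{S}$ of the linear part acting \emph{forward} on $(x,y)$ and simply as $C^{-1}$ on $z$, then proves $\mathcal{S}$ is bounded from $\mathcal{X}_{\ell}$ to $\mathcal{X}_{\ell-M+1}$ via a careful estimate of $\sum_j \exp(-c|R^j(t)|^{-1/\gamma})$; the condition $\|C^{-1}\|<1$ handles the $z$-feedback. The payoff is that once $H$ is found in the exponential space, $K=K_e+H$ is automatically $\gamma$-Gevrey asymptotic to $\hat K$: no uniform-in-$n$ bookkeeping is needed, and the ``main obstacle'' you flag simply disappears.
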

The proof of this theorem is given in Section~\ref{sec:existence}.

Now we give conditions that ensure that the manifold given by
Theorem~\ref{mtexistencesolution} is unique (in a suitable open
set).
\begin{theorem}
\label{thm:uniqueness} Under the same assumptions of
Theorem~\ref{mtexistencesolution}, if the matrices $C$ and $B_1$
satisfy that
$$
\Spec C \subset \{ z\in \C : |z|>1\}, \qquad \text{and} \qquad \Spec B_1 \subset \{ z\in \C : \Re  z >0\},
$$
there exists a unique right hand side branch of a curve in the center manifold which is a parabolic stable manifold to the origin.
That is, if we denote by $B(\rho)\subset \R^{1+d+d'}$ the open ball of radius $\rho$, the following local stable manifold
$$
W^s_\rho =\{ (x,y,z) \in B(\rho) \ : \ F^k (x,y,z) \in B(\rho)\cap \{x>0\} ,\; \text{for all }k\geq 0\}
$$
satisfies that $W^{s}_\rho = K([0,\rho))$.
\end{theorem}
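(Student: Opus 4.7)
The easy inclusion $K([0,\rho)) \subseteq W^s_\rho$ follows directly from Theorem~\ref{mtexistencesolution}: since $a > 0$, the polynomial $R(t) = t - at^N + bt^{2N-1}$ maps a small interval $[0,\rho)$ into itself with iterates $R^k(t) \searrow 0$, so the invariance equation gives $F^k(K(t)) = K(R^k(t)) \in B(\rho)\cap\{x>0\}$ for every $k\ge 0$, after shrinking $\rho$ if necessary.

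For the reverse inclusion I would straighten the curve $K([0,\rho))$ to the positive $x$-axis and then exploit expansivity of $F$ transverse to it. The graph parameterization $\varphi(x) := \pi^{y,z} K \circ (\pi^x K)^{-1}(x)$ is well defined on $[0,\rho)$ because $\pi^x K(t) = t + O(t^N)$ is locally invertible, and is $\mathcal{C}^\infty$ at the origin thanks to the $\gamma$-Gevrey asymptotics supplied by Theorem~\ref{mtexistencesolution}. The $\mathcal{C}^\infty$ change of variables $(x,y,z) \mapsto (x, y - \varphi^y(x), z - \varphi^z(x))$ sends $K([0,\rho))$ to the positive $x$-axis and, combined with the structure \eqref{system}, produces at every point $(x,0,0)$ with $x>0$
\[
DF(x,0,0) = \begin{pmatrix} R'(x) & 0 & 0 \\ * & \Id + x^{M-1}B_1 + O(x^M) & x^{M-1}B_2 + O(x^M) \\ * & O(x) & C + O(x) \end{pmatrix}.
\]
I would then choose adapted inner products in which $B_1 + B_1^* \ge 2\eta\,\Id$ (possible because $\Spec B_1 \subset \{\Re z > 0\}$) and $\|Cu\|_2 \ge \mu \|u\|_2$ with $\mu > 1$ (because $\Spec C \subset \{|z|>1\}$), and work with the weighted norm $\|(y,z)\|_A^2 := \|y\|_1^2 + A\|z\|_2^2$. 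Taking $A$ large makes the off-diagonal blocks above be dominated by the strong expansion of the $z$-block, yielding on a small neighborhood of the axis with $x>0$ the transverse expansivity estimate
\[
\|\pi^{y,z} F(x,y,z)\|_A^2 \ge (1 + c\,x^{M-1})\,\|(y,z)\|_A^2,
\]
the quadratic remainders being absorbed by the smallness of $\rho$.

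Iterating this inequality along the orbit $(x_n,y_n,z_n) = F^n(x_0,y_0,z_0)$ of a point $(x_0,y_0,z_0) \in W^s_\rho$, and using that $x_n$ is a small perturbation of the model iterates $x_n \sim (a(N-1)n)^{-1/(N-1)}$, yields $\|(y_n,z_n)\|_A \ge \|(y_0,z_0)\|_A \prod_{k=0}^{n-1}(1+c\,x_k^{M-1})^{1/2}$. When $M\le N$ one has $\sum_k x_k^{M-1} = \infty$, the product is unbounded, and boundedness of the orbit in $B(\rho)$ forces $(y_0,z_0) = 0$. The main obstacle is the case $M > N$: there the series is summable, so the weak $y$-expansion by itself does not conclude, and one must exploit the strong expansion in $z$. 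I would complete the argument by an invariant cone condition of the form $\|z\|_2 \ge \kappa\, x^{M-1}\|y\|_1$: in this cone the $\mu>1$ factor from the $z$-block drives orbits out of $B(\rho)$, while orbits starting outside are pushed back into the cone by the $O(x)$ coupling $A_{zy}(x)$ coming from the $h_{\ge 2}$ terms. In every case $(y_0,z_0)\ne 0$ is incompatible with the orbit remaining in $B(\rho)$, giving $W^s_\rho = K([0,\rho))$.
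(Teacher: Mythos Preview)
The paper does not actually prove this theorem: it states only that ``the same geometrical arguments in~\cite{BH08}'' apply and omits the details. Your sketch is therefore already more explicit than the paper, and the transverse-expansion strategy you outline is exactly the kind of argument the citation points to (in~\cite{BH08} the situation is simpler because $d=0$, so only the genuinely hyperbolic $z$-block is present).

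Your identification of the obstacle for $M>N$ is correct, but your proposed cone argument, which leans on the strong expansion of the $z$-block, cannot close the gap when $d'=0$, since there is then no $z$-direction to exploit. In fact the equality $W^s_\rho=K([0,\rho))$ appears to fail in that regime: for $F(x,y)=(x-x^2,\;y+x^2y)$ one has $N=2$, $M=3$, $B_1=1$, $d'=0$, and all hypotheses of the theorem are met; the invariant curve is the $x$-axis, yet for small $x_0>0$ and $y_0\neq 0$ one finds $x_n\searrow 0$ while $y_n=y_0\prod_{k<n}(1+x_k^2)$ stays bounded and bounded away from zero (indeed $\sum_k x_k^2=\sum_k(x_k-x_{k+1})=x_0<\infty$), so the whole orbit remains in $B(\rho)\cap\{x>0\}$ off the axis and $W^s_\rho$ is two-dimensional. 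Your argument is sound for $M\le N$, and the cone idea is plausible when $M>N$ and $d'\ge 1$; for $M>N$ with $d'=0$ the statement itself (the equality $W^s_\rho=K([0,\rho))$, as opposed to the weaker uniqueness of a curve \emph{asymptotic to the origin}) seems to require amendment rather than a sharper proof.
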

This theorem is proven by using the same geometrical arguments
in~\cite{BH08}. We omit the proof.

\begin{remark}
In the last two theorems we have assumed $a>0$. Clearly, if $a<0$,
the map $F^{-1}$ has the form in~\eqref{system} substituting $a,
B_1, B_2$ and $C$ by $-a, -B_1, -B_2$ and $C^{-1}$ respectively.
Therefore, if $a<0$, we can apply (if the other conditions are
satisfied) Theorems~\ref{mtexistencesolution}
and~\ref{thm:uniqueness} to $F^{-1}$ obtaining a local unstable
parabolic invariant manifold.
\end{remark}

A straightforward consequence of Theorem~\ref{mtexistencesolution}
is the following.
\begin{corollary}\label{cor:Fatou}
If $a\neq 0$, there exists a unique constant $b$ such that the real analytic maps
$$
f(x)=x-ax^{N} + f_{\geq N+1}(x),\qquad R(x) = x-ax^{N} + bx^{2N-1}
$$
are conjugated in a domain $S(\beta,\rho)$, with $0<\beta<\alpha\pi$ and $\rho$ small, by means of an
analytic function $h:S(\beta,\rho)\to \C$ which is
$\alpha$-Gevrey asymptotic to a $\alpha$-Gevrey formal series at $0$.
\end{corollary}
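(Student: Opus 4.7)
The plan is to read off the corollary as the scalar specialization $d=d'=0$ of Theorems~\ref{formal_theorem} and~\ref{mtexistencesolution}, and then check that the resulting semi-conjugacy is actually a conjugacy in a subsector.

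First I would observe that when $d=d'=0$ the map~\eqref{system} collapses to $F(x)=x-ax^N+f_{\geq N+1}(x)=f(x)$: the homogeneous polynomial $f_N$ vanishes identically because of the hypothesis $f_N(x,0,0)=0$, and there are no $y,z$ components, no matrices $B_1,B_2,C$, and no integer $M$ to worry about. In particular, the hypotheses on $B_1$ in Theorem~\ref{formal_theorem} are vacuously true, and Theorem~\ref{formal_theorem} applies. It provides a unique polynomial $R(t)=t-at^N+bt^{2N-1}$ (which establishes the uniqueness of $b$ asserted in the corollary) and, fixing any $c\in\R$, a unique formal series $\hat h(t)=\sum_{n\geq 1}h_nt^n$ with $h_1=1$, $h_N=c$, such that $f\circ\hat h=\hat h\circ R$ as formal series. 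Since $d=d'=0$ forces $\gamma=1/(N-1)=\alpha$, the Gevrey statement in Theorem~\ref{formal_theorem} gives that $\hat h$ is $\alpha$-Gevrey.

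Next, assuming first $a>0$, I would apply Theorem~\ref{mtexistencesolution} in the same scalar setting. Again the spectral conditions on $C$ and $B_1$ are vacuous, so for each $0<\beta<\alpha\pi$ the theorem produces an analytic function $h:S(\beta,\rho)\to\C$, with $\rho>0$ small, solving $f\circ h=h\circ R$ on $S(\beta,\rho)$ and $\alpha$-Gevrey asymptotic to $\hat h$. Only two small points remain. First, the statement of the corollary asks for a true \emph{conjugacy}, i.e.\ $h$ must be invertible on its image. Because $\hat h(t)=t+O(t^2)$, the Gevrey asymptotic estimate with $n=2$ gives $h(t)=t+O(|t|^2)$ uniformly on any proper subsector $\bar S_1\subset S(\beta,\rho)$. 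After shrinking the parameters to $\bar\beta<\beta$ and $\bar\rho<\rho$, $h$ is injective on $S(\bar\beta,\bar\rho)$, with analytic inverse, and the invariance equation rewrites as $f=h\circ R\circ h^{-1}$, i.e.\ the desired conjugacy. Second, to cover $a<0$, I would invoke the remark following Theorem~\ref{thm:uniqueness}: $f^{-1}$ has the form $x+ax^N+\O(x^{N+1})$, so the above construction applied to $f^{-1}$ yields a conjugacy $f^{-1}=\tilde h\circ \tilde R\circ\tilde h^{-1}$ on a sector, from which $f=\tilde h\circ\tilde R^{-1}\circ\tilde h^{-1}$, and one checks that $\tilde R^{-1}$ agrees, up to order $2N-1$, with the unique normal form $R$.

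There is essentially no new estimate to make; the proof is bookkeeping that identifies the hypotheses of the corollary with those of the two main theorems. The only genuine point, and the only place where a small additional argument is needed, is the passage from the semi-conjugacy $f\circ h=h\circ R$ to a true conjugacy, i.e.\ the local injectivity of $h$ in a subsector. That step is immediate from $h_1=1$ and the Gevrey asymptotic expansion, so I do not expect any real obstacle.
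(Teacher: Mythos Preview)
Your proposal is correct and matches the paper's approach: the paper simply states the corollary as ``a straightforward consequence of Theorem~\ref{mtexistencesolution}'' without further elaboration, and your specialization to $d=d'=0$ (making all spectral hypotheses vacuous and forcing $\gamma=\alpha$) is exactly the intended reading. The additional points you supply---local injectivity of $h$ from $h(t)=t+O(t^2)$ and the reduction of the case $a<0$ to $a>0$ via the remark after Theorem~\ref{thm:uniqueness}---are reasonable details the paper leaves implicit.
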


In the next section we will provide examples and describe the
parabolic manifolds as graphs of functions. We remark that
\begin{remark}
Let $F$ be a map of the form~\eqref{system} satisfying the
hypotheses of Theorem~\ref{mtexistencesolution}. Then, the graph
invariance equation
\begin{equation}
\label{graph_transform} F^{y,z} (x, \Phi(x) )=  \Phi(F^x(x,
\Phi(x)))
\end{equation}
with the condition $\Phi(0)=0$, $D\Phi(0)=0$,
has a $\gamma$-Gevrey solution if and only if the invariance
equation $F\comp  K = K\comp R$ has a $\gamma$-Gevrey solution with
$K(t)=(t,0)+\O(t^2)$ and $R(t) = t-at^{N} +b t^{2N-1}$. It is unique if the hypotheses of
Theorem~\ref{thm:uniqueness} are satisfied.

Indeed, if $\Phi$ satisfies~\eqref{graph_transform}, then
$\tilde{K}(x)=(x,\Phi(x))$ and $\tilde R = F^x(x, \Phi(x))$ is a
solution of $F\comp K = K \comp R$. Let $h$ be the $\gamma$-Gevrey
conjugation provided by Corollary~\ref{cor:Fatou}. Then $R(t)=
h^{-1} \circ  \tilde R \circ h(t) = t-at^{N}+bt^{2N-1}$ and $ K=
\tilde{K}\circ h$ is the solution we are looking for. On the
contrary, if $F\circ K = K\circ R$, then $\Phi(x) = K^{y,z}(
(K^x)^{-1}(x))$ is solution of the graph invariance equation. Notice
that $K^x(t) = t+ O(t^2)$ is invertible around the origin and its
inverse is Gevrey.

The same happens at a formal level. In this case, only the
hypotheses of Theorem~\ref{formal_theorem} are required.
\end{remark}

The statements in this section provide upper bounds to the
coefficients of the formal solution of the invariance equation. In
the next section we will give examples that show that our results
are sharp but also examples that show that a map satisfying our
hypotheses can have an analytic invariant manifold. To provide
conditions that ensure the existence of lower bounds of the
coefficients remains an open problem. The following proposition
shows that these conditions cannot depend only on a finite number of
coefficients of the Taylor expansion of~$F$ at the origin.

\begin{proposition}
\label{prop:trickylowerbounds} Let $F$ be an analytic map of the
form~\eqref{system} satisfying the hypotheses of
Theorem~\ref{formal_theorem} and $\hat \varphi (x) = \sum_{k\ge 2}
\varphi_k x^k$ a formal  solution of the invariance
equation~\eqref{graph_transform}. For $p\ge 0$, let $\varphi_{\le
p} (x) = \sum_{2\le k \le p}
\varphi_k x^k$.

Then, for any $p\ge 2$, there exists an analytic map $G$ such that
\[
\|F(x,y,z) -G(x,y,z)\| = \O(\|(x,y,z)\|^{p+1})
\]
with $\text{graph}\, \varphi_{\le p}$ as  invariant manifold to the
origin. If $p \ge \max\{N,M\}$, $G$ satisfies the hypotheses of
Theorem~\ref{formal_theorem} and, consequently, $\text{graph}\,
\varphi_{\le p}$ is a parabolic invariant manifold.
\end{proposition}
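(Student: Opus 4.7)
My plan is to construct $G$ by adding to $F$ a small correction that depends only on $x$ and vanishes to sufficiently high order at the origin. Because $\hat\varphi$ formally solves \eqref{graph_transform}, the truncation $\varphi_{\le p}$ satisfies this equation up to an error that is $O(x^{p+1})$, and this error is precisely what the correction needs to cancel.

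Concretely, I would define the analytic function
\[
E(x) \;=\; \varphi_{\le p}\!\bigl(F^x(x,\varphi_{\le p}(x))\bigr) \;-\; F^{y,z}(x,\varphi_{\le p}(x)) \in \R^{d+d'},
\]
and set $G(x,y,z) = F(x,y,z) + (0,E(x))$. The first step is to verify that $E(x) = O(x^{p+1})$. I would write $r(x) = \hat\varphi(x) - \varphi_{\le p}(x) = O(x^{p+1})$ and subtract the identity $F^{y,z}(x,\hat\varphi(x)) = \hat\varphi(F^x(x,\hat\varphi(x)))$, which holds in $\R[[x]]$ by the formal invariance of $\hat\varphi$. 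Since $F$ is analytic, $\varphi_{\le p}$ is polynomial, and $F^x$ has order~$1$, a direct Taylor expansion shows that both $F^{y,z}(x,\hat\varphi(x)) - F^{y,z}(x,\varphi_{\le p}(x))$ and $\hat\varphi(F^x(x,\hat\varphi(x))) - \varphi_{\le p}(F^x(x,\varphi_{\le p}(x)))$ are $O(x^{p+1})$, so the difference $E(x)$ is analytic and vanishes to order $p+1$.

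With this, invariance of $\mathrm{graph}\,\varphi_{\le p}$ under $G$ is immediate: $G^x = F^x$ and $G^{y,z}(x,\varphi_{\le p}(x)) = F^{y,z}(x,\varphi_{\le p}(x)) + E(x) = \varphi_{\le p}(G^x(x,\varphi_{\le p}(x)))$. The bound $\|G-F\| = |E(x)| = O(\|(x,y,z)\|^{p+1})$ is also immediate. For the last statement, when $p \ge \max\{N,M\}$, the correction $E^y(x) = O(x^{p+1}) \subset O(x^{M+1})$ is absorbed into $g_{\ge M+1}$ and $E^z(x) = O(x^{p+1}) \subset O(x^2)$ into $h_{\ge 2}$, so $G$ has the structure~\eqref{system} with the same constants $a$, $N$, $M$ and matrices $B_1$, $B_2$, $C$ as $F$; in particular the hypotheses of Theorem~\ref{formal_theorem} transfer verbatim. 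Since $\varphi_{\le p}(0)=0$ and $D\varphi_{\le p}(0)=0$, its graph is tangent to the eigenspace of the eigenvalue $1$, hence is a parabolic invariant manifold of $G$.

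There is no real obstacle here; the proof is a one-line construction followed by order counting. The only point requiring a line of justification is the bound $E(x) = O(x^{p+1})$, which must be extracted from the formal invariance of $\hat\varphi$ together with the order of the remainder $\hat\varphi - \varphi_{\le p}$.
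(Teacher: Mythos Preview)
Your proposal is correct and follows essentially the same route as the paper: the paper defines the same error function (called $\tilde g$ there), shows it is $\O(x^{p+1})$ by comparing with the formal invariance identity and using $\varphi_1=0$, and sets $G^x=F^x$, $G^{y,z}=F^{y,z}+\tilde g$. Your order-counting argument and the verification that $G$ retains the structure~\eqref{system} when $p\ge\max\{N,M\}$ match the paper's reasoning.
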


We defer the proof of this proposition
to Appendix~\ref{proofofpropositionprop:trickylowerbounds}.

In the following section we consider some examples. It is often
easier to provide examples of maps arising from flows. The following
remark is straightforward, but allows us to apply our results
directly to flows.
\begin{remark}
Let $X(x,y,z,t)$ be a $T$-periodic field, $(x,y,z)\in \R\times \R^{d}\times \R^{d'}$ of the form
\begin{equation}\label{formX}
X(x,y,z,t)
=
\begin{pmatrix} - a x^N  +   f_N(x,y,z,t) + f_{\geq N+1} (x,y,z,t) \\
 x^{M-1} B_1 y+ x^{M-1} B_2 z+ g_{M}(x,y,z,t)+ g_{\geq M+1}(x,y,z,t) \\
D z  +  h_{\geq 2}(x,y,z,t).
\end{pmatrix}
\end{equation}
Assume that the functions $f_N, f_{\geq N+1}, g_M, g_{\geq M+1},
h_{\geq 2}$ satisfy the hypotheses in Section~\ref{sec:notation} for
all $t\in [0,T]$ and that $0\notin \Spec D$. Then, any stroboscopic
Poincar\'e map of $\dot{\xi}=X(\xi,t)$ has the form
in~\eqref{system} with the same $a,B_1, B_2$ and $C=e^{TD}$.
\end{remark}

%%%%%%%%%%%%%%%%%%%%%%%%%%%%%%%%%%%%%%%%%%%%%%%%%%%%%%%%%%%%%%%%%%%%%%%%%%%%%%%%%%%%%%%%%%%%%%%%%%%%%%%%%%%

\section{Examples}\label{sec:examples}
In this section we provide several examples. In particular we show
that, under the hypotheses of Theorem~\ref{mtexistencesolution}, the
parabolic manifold (and, consequently, the formal solution) is
indeed $\gamma$-Gevrey and not more regular, that is, it is not
$\gamma'$-Gevrey for $0\le \gamma' < \gamma$.

It is more convenient to work with differential equations and manifolds represented as
graphs. That is, for a given periodic in time system $X(x,y,z,t)$ of
the form~\eqref{formX}, we look for formal solutions $(y,z)=\hat
\Phi(x,t)$, depending periodically on $t$, of the invariance
equation:
\begin{equation}\label{inveqX}
X^{y,z}(x,\hat \Phi(x,t),t)= D_x \hat \Phi(x,t) \big (X^x(x,\hat \Phi(x,t),t)\big ) +\frac{\partial\hat \Phi}{\partial t} (x,t).
\end{equation}

It is often useful to use the following equivalent definition of a
$s$-Gevrey series (see~\cite{Balser94}): a formal series
$\sum_{n\geq 0} a_n z^n$ is $s$-Gevrey if there exist $c_1,c_2
>0$ such that $|a_n| \leq c_1 c_2^n \Gamma(1+ s n)$, for all $n\ge
0$.

\subsection{Some elementary examples}
The first one is a generalization of the ones in~\cite{BH08}. Here
we add the variables corresponding to the eigenvalue equal to~$1$
but still require the presence of the hyperbolic directions.

\begin{claim}\label{Claim1} Let $X(x,y,z)$  be the autonomous vector field
\begin{equation}\label{systemClaim1}
X(x,y,z)=\big (-a x^N, x^{M-1} B_1  y + g(x,y,z), Cz+x^{\ell}c\big
),  \qquad (x,y,z)\in \mathbb{R}\times \mathbb{R}^d\times
\mathbb{R}^{d'},
\end{equation}
with $c\neq 0$, $b\neq 0$, $C$ an invertible matrix and
$g=g_{M}+g_{\geq M+1}$ as in~\eqref{system}.

Assume that $a,B_1$ satisfy their corresponding conditions in
Theorem~\ref{formal_theorem}. Then
\begin{itemize}
\item If $M\geq N$ and $d'\ge 1$, for any $g(x,y,z)=\O(\|(x,y,z)\|^{M+1})$, the formal invariance equation~\eqref{inveqX}
has a $\gamma$-Gevrey solution which is not $\gamma'$-Gevrey for any
$0\le\gamma' < \gamma$.
\item If $M < N$, taking $g(x,y,z)=x^{\nu} b$ with $\nu\geq M+1$, the formal invariance equation~\eqref{inveqX}
has a solution $\hat \Phi(x) = \sum_{n\geq 1} \Phi_n x^{n}$ which is
$\gamma$-Gevrey and is not $\gamma'$-Gevrey for any $0\le\gamma' <
\gamma$.
\end{itemize}
\end{claim}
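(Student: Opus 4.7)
Since the vector field $X$ in~\eqref{systemClaim1} is autonomous, it suffices to look for time-independent formal solutions $\hat\Phi(x)=(\hat\phi(x),\hat\psi(x))$ of~\eqref{inveqX}. By inspection of~\eqref{systemClaim1}, the $z$-component of $X$ does not involve $y$, and in Case 2 the $y$-component is independent of $(y,z)$. Consequently the invariance equation decouples into a linear equation for $\hat\psi$ alone,
\begin{equation*}
-a x^N\,\hat\psi'(x) = C\hat\psi(x) + x^\ell c,
\end{equation*}
and (in Case 2) an analogous linear equation for $\hat\phi$ alone. The $\gamma$-Gevrey upper bound follows by applying Theorem~\ref{formal_theorem} to the time-one map of $X$, which has the form~\eqref{system} by the remark at the end of Section~\ref{sec:Main_Results}, upon observing that autonomy makes the flow- and time-one-map invariance equations share the same formal solution. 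The whole task is therefore to exhibit coefficients growing at the rate $\Gamma(1+n\gamma)$.

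\textbf{Case $M\ge N$ ($\gamma=1/(N-1)$).} Writing $\hat\psi(x)=\sum_{m\ge 1}\psi_m x^m$ and equating coefficients yields $C\psi_m + c\,\delta_{m,\ell} = -a(m-N+1)\psi_{m-N+1}$ with the convention $\psi_j:=0$ for $j\le 0$. An index-by-index check using invertibility of $C$ propagates $\psi_m=0$ for every $m<\ell$; at $m=\ell$ we read $\psi_\ell=-C^{-1}c\ne 0$, and a telescoping induction gives
\[
\psi_{\ell+k(N-1)}=(-1)^{k+1}a^k\Bigl(\prod_{j=0}^{k-1}(\ell+j(N-1))\Bigr)C^{-(k+1)}c.
\]
The product equals $(N-1)^k\,\Gamma(k+\ell/(N-1))/\Gamma(\ell/(N-1))$, so Stirling yields $\|\psi_{\ell+k(N-1)}\|\ge c_1\,c_2^k\,k!$. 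Setting $n=\ell+k(N-1)$ and assuming a bound $\|\psi_n\|\le C_1 C_2^n\,\Gamma(1+n\gamma')$ with $\gamma'<1/(N-1)$ yields a contradiction, since $\log[k!/\Gamma(1+n\gamma')]\sim\bigl(1-(N-1)\gamma'\bigr)\,k\log k\to +\infty$.

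\textbf{Case $M<N$ ($\gamma=1/(N-M)$).} With $g(x,y,z)=x^\nu b$ the $y$-equation is decoupled, and matching coefficients of $x^k$ gives, with $\mu:=N-M\ge 1$ and $n_0:=\nu-M+1$,
\[
B_1\phi_m + b\,\delta_{m,n_0} = -a(m-\mu)\phi_{m-\mu},\qquad \phi_j:=0 \text{ for } j\le 0.
\]
The inequality $n_0-\mu=\nu-N+1<n_0$ (which is precisely $M<N$), together with invertibility of $B_1$, forces $\phi_m=0$ for $m<n_0$, so $\phi_{n_0}=-B_1^{-1}b\ne 0$. The identical telescoping yields
\[
\phi_{n_0+k\mu}=(-1)^{k+1}a^k\Bigl(\prod_{j=0}^{k-1}(n_0+j\mu)\Bigr)B_1^{-(k+1)}b,
\]
whose norm satisfies $\|\phi_{n_0+k\mu}\|\ge c_1\,c_2^k\,k!$ by Stirling (using $b\ne 0$ and $\|B_1^{-k}b\|\ge\|b\|/\|B_1\|^k$), and the same Stirling comparison excludes $\gamma'<1/(N-M)$. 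The only real subtlety, in both cases, is the bookkeeping needed to guarantee that the conventional zeros propagate across \emph{every} residue class modulo $N-1$ (respectively $\mu$) up to the injection index, so that no spurious coefficient contaminates the sharp growth; this is the main, though elementary, technical point.
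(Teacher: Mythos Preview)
Your proof is correct and follows essentially the same route as the paper's. Both arguments decouple the invariance equation, solve the linear recursion for the $z$-component (respectively, when $M<N$, for the $y$-component with the specific $g=x^\nu b$), arrive at the same closed formula
\[
\psi_{\ell+k(N-1)}=(-1)^{k+1}a^k\Bigl(\prod_{j=0}^{k-1}(\ell+j(N-1))\Bigr)C^{-(k+1)}c,
\]
and read off the sharp growth. The differences are cosmetic: you invoke Theorem~\ref{formal_theorem} explicitly for the upper bound and use Stirling for the lower bound, whereas the paper bounds the product directly by $\Gamma(\ell+k)/\Gamma(\ell)$ (using $N-1\ge 1$) and appeals to the $\Gamma(1+sn)$ characterization of Gevrey order; both are equivalent.
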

\begin{proof}
We introduce $\hat \Phi(x) = (\hat \varphi(x),\hat \psi(x))$. We
have that $\hat \psi(x)=\sum_{n\geq 2} \psi_n x^n$ satisfies
$$
-a \sum_{n\geq 2 } n \psi_n x^{n+N-1} = C\sum_{n\geq 2} \psi_n x^{n} + x^{\ell}c
$$
or, equivalently,
$$
\sum_{n\geq 2} \psi_n x^{n} = - x^{\ell} C^{-1} c -a C^{-1}\sum_{n\geq N+1} (n-N+1) x^n \psi_{n-N+1}.
$$
Therefore $\psi_{2},\cdots = \psi_{\ell-1}=0$, $\psi_{\ell}=-C^{-1}c $ and
\begin{equation}\label{eqformalpsi}
\psi_{n} = -a (n-N+1) C^{-1} \psi_{n-N+1},\qquad n\geq \ell +1.
\end{equation}
Then $\psi_{n}=0$ if $n\neq \ell +k(N-1)$ and
$$
\psi_{\ell + k(N-1)} = (-1)^{k+1} a^k \prod_{i=0}^{k-1}
(\ell+i(N-1))  C^{-k-1} c,
$$
which implies, since $N-1\geq 1$, that $\|\psi_{\ell+k(N-1)}\| \geq \|c\| \|C\|^{-1} (a
\|C\|^{-1})^{k} \Gamma(\ell+k)/\Gamma(\ell)$. Then, using that
$\Gamma(\ell +k(N-1) \alpha)=\Gamma(\ell +k)$ we conclude that the
formal series $\hat \psi$ is exactly of order $\alpha=1/(N-1)$.

If $M\geq N$, $\gamma=1/(N-1)$, then, $\hat \psi$ is exactly of the Gevrey order claimed in
Theorem~\ref{formal_theorem}. Therefore, no matter what is the Gevrey order of $\hat \varphi$, the
asymptotic series $\hat \Phi$ is $\gamma$-Gevrey.

Now we consider the case $M<N$. The invariance of the formal
solution $\hat \varphi(x) = \sum_{n\geq 2} \varphi_n x^n$ reads
$$
-a \sum_{n\geq 2} n \varphi_n x^{n+N-1} = b x^{\nu} + B_1 \sum_{n\geq 2} \varphi_n x^{n+M-1}.
$$
Since $M<N$ and $B_1$ is invertible, $\varphi_2=\cdots =\varphi_{\nu-M}=0$, $\varphi_{\nu -M+1}=-B_1^{-1} b$, and
$$
\varphi_{n} = -a (n+M-N) B_1^{-1} \varphi_{n-N+M}, \qquad n\geq \nu-M+1.
$$
In the same way as in~\eqref{eqformalpsi}, it follows that $\hat \varphi$ is
Gevrey of order $\gamma=1/(N-M)$.
\end{proof}

%As a consequence of the proof of Claim~\ref{Claim1}, we deduce:
%\begin{claim}
%When $M<N$ and $d'=0$ (the vector field is independent of $z$), system~\eqref{systemClaim1} has a formal solution of~\eqref{inveqX} which is
%exactly $\gamma$-Gevrey.
%\end{claim}

We emphasize  that, when $M<N$, the map defined
by~\eqref{systemClaim1} has a Gevrey formal solution of order
precisely $\gamma=1/(N-M)$ even if $d' = 0$, that is, even if $F$ is
tangent to identity, but the same claim (for this particular example) only holds for $M\geq N$ if
$d'\ge 1$. In the next subsection we will deal with the case  $M\geq
N$ and $d'=0$, which is the relevant one in the problems of
celestial mechanics we will consider in
Section~\ref{sec:celestial_mechanics}.

\subsection{The tangent to the identity case (\texorpdfstring{$d'=0$}{Lg}) when \texorpdfstring{$M\geq N$}{Lg}}
In this section we present a family of differential equations of the
form~\eqref{formX} having a formal solution of the invariance
equation~\eqref{inveqX}. We check that this formal solution is
precisely $\gamma$-Gevrey. Recall that in this case
$\gamma=1/(N-1)$.

The example we will consider will be given by a non autonomous time
periodic vector field. The reason is because if the vector field is
autonomous, the parabolic invariant manifold is analytic (when
written as a graph), as the following claim shows.

\begin{claim} \label{claim:autonomousflowsanalyticmanifolds} Assume $M\geq N$. Let $X$ be an analytic vector field
of the form
\begin{equation}\label{formXautonomous}
X(x,y)=\big (-ax^N + f(x,y),  B_1 x^{M-1}y + g(x,y)\big ),\qquad  (x,y) \in \mathbb{R}\times \mathbb{R}^d
\end{equation}
with $f=f_{N}+f_{\geq N+1}$, $g=g_{M}+g_{\geq M+1}$ as
in~\eqref{system}, $a\neq 0$ and $B_1$ satisfying the condition stated in Theorem~\ref{formal_theorem}.
Then, the invariance
equation~\eqref{inveqX} has a real analytic solution $\varphi:
B_\rho \subset \mathbb{C} \to \mathbb{C}^d$  tangent to the $x$-axis
at the origin.

As a consequence the real analytic maps
$$
F(x,y) = (x-ax^N + \tilde{f}(x,y), y+B_1x^{M-1} y + \tilde{g}(x,y)),
$$
with $a\neq 0$, which are the time $1$ map of systems
like~\eqref{formXautonomous}, have an analytic solution of the graph invariance equation~\eqref{graph_transform}.
\end{claim}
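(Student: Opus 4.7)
The plan is to turn the invariance equation \eqref{inveqX} into a Briot--Bouquet type analytic singular ODE for $\varphi$ and to apply the classical existence theorem for such equations.

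Since $X$ is autonomous, \eqref{inveqX} reduces to the scalar ODE $X^x(x,\varphi)\,\varphi'=X^y(x,\varphi)$. Motivated by the fact that the formal solution supplied by Theorem \ref{formal_theorem} starts at order $x^2$ (because $K_1=(1,0)^\top$), I would substitute $\varphi(x)=x^2\psi(x)$. The vanishing conditions $f_N(x,0)=0$, $g_M(x,0)=0=D_y g_M(x,0)$, together with the homogeneity of $f_N$ and $g_M$, imply that
$$
X^x(x,x^2\psi)=-ax^N+x^{N+1}\widetilde F(x,\psi),\qquad X^y(x,x^2\psi)=x^{M+1}\bigl(B_1\psi+\widetilde G(x,\psi)\bigr),
$$
with $\widetilde F,\widetilde G$ analytic near $(0,0)$ and, crucially, $\widetilde G(0,\psi)$ a constant vector $\widetilde G_0\in\R^d$ (its only $x^0$--contribution is the coefficient of $x^{M+1}$ in $g_{\geq M+1}(x,0)$).

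Substituting $\varphi'=2x\psi+x^2\psi'$ and dividing by $x^{N+1}$ converts the invariance equation into
$$
x(a-x\widetilde F)\psi'=-(2a\Id+x^{M-N}B_1)\psi+2x\widetilde F\psi-x^{M-N}\widetilde G(x,\psi).
$$
If $M>N$, a further division by $a-x\widetilde F$ gives the Briot--Bouquet equation $x\psi'=-2\psi+h(x,\psi)$ with $h$ analytic, $h(0,0)=0$, $\partial_\psi h(0,0)=0$; the eigenvalue $-2$ is not a positive integer, so the classical theorem (for instance, the Hukuhara--Kimura--Matuda version for systems) produces a unique analytic $\psi$ with $\psi(0)=0$. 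If $M=N$, evaluating at $x=0$ forces $\psi(0)=-(2a\Id+B_1)^{-1}\widetilde G_0$, which is well-defined because the hypothesis of Theorem \ref{formal_theorem} at $l=2$ makes $B_1+2a\Id$ invertible. The shift $\psi=\psi(0)+\bar\psi$, followed by division by $a-x\widetilde F$, produces the Briot--Bouquet form $x\bar\psi'=A\bar\psi+\bar h(x,\bar\psi)$ with $A=-(2\Id+B_1/a)$ and $\bar h$ analytic, $\bar h(0,0)=0$, $\partial_{\bar\psi}\bar h(0,0)=0$. The spectrum of $A$ equals $\{-2-\mu/a:\mu\in\Spec B_1\}$, and the assumption that $B_1+la\Id$ is invertible for every $l\geq 2$ is precisely what ensures that no eigenvalue of $A$ is a positive integer; Briot--Bouquet then yields an analytic $\bar\psi$, and $\varphi(x)=x^2(\psi(0)+\bar\psi(x))$ is real analytic on a disk $B_\rho\subset\C$.

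The consequence for the time-$1$ map is immediate: invariance of $\mathrm{graph}\,\varphi$ under the flow $\Phi_t$ of $X$ (which is exactly what the ODE expresses) implies in particular invariance under $F=\Phi_1$, so $\varphi$ also satisfies the discrete graph invariance equation~\eqref{graph_transform}. The most delicate step is the $M=N$ case, in which the preliminary algebraic determination of $\psi(0)$, the shift $\psi\mapsto\psi(0)+\bar\psi$, and the matching of the Briot--Bouquet spectral condition with the hypothesis on $B_1$ must all be carried out consistently.
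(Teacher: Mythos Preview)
Your argument is correct and reaches the same conclusion by a route different from the paper's. The paper performs the blow-up $y=xu$, rewrites the invariance equation as the \emph{autonomous} planar system
\[
\dot x=-ax+x^{-N+1}\bar f(x,u),\qquad \dot u=(a\Id+B_1x^{M-N}+\cdots)u+x^{-N}\bar g(x,u),
\]
observes that the origin is a partially hyperbolic equilibrium with a simple eigenvalue $-a$, and then invokes the nonresonant invariant manifold theorem of Cabr\'e--Fontich--de la Llave \cite{CabreFL03a} to obtain an analytic one-dimensional manifold $u=h(x)$; the required non-resonance $k(-a)\notin\Spec(a\Id+B_1)$ for $k\ge 1$ is exactly the hypothesis on $B_1$. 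Your approach instead uses the deeper blow-up $y=x^2\psi$ and reduces to a Briot--Bouquet singular ODE $x\psi'=A\psi+h(x,\psi)$, appealing to the classical analytic existence theorem under the condition $\Spec A\cap\mathbb{Z}_{\ge 1}=\emptyset$. The two spectral obstructions coincide (your eigenvalues $-2-\mu/a$ avoid the positive integers precisely when $-la\notin\Spec B_1$ for $l\ge 3$, and you correctly use $l=2$ separately to define $\psi(0)$), so the hypotheses match up. Your route is somewhat more elementary in that it avoids the invariant-manifold machinery and stays within classical singular ODE theory; the paper's route is more geometric and makes the non-resonance condition transparent as a spectral relation between the tangent and normal directions. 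Either way, the key step---resolving the degeneracy by a blow-up so that a standard hyperbolic/analytic result applies---is the same.
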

\begin{proof}
The one dimensional invariant manifold we are looking for is the
graph of a function $y=\varphi(x)$ satisfying the equation
\begin{equation}\label{graphautonomous}
\frac{dy}{dx}=\frac{1}{-ax^{N} + f(x,y)} \big [B_1x^{M-1} y + g(x,y)\big ].
\end{equation}
We introduce the new variable $u$ by $y=xu$. The system becomes
\begin{equation}\label{blowupautonomous}
\frac{du}{dx} = \frac{1}{-ax^N+ f(x,xu)}\big [(ax^{N-1}\Id   + B_1x^{M-1} +x^{-1} f(x,xu))u  + x^{-1} g(x,xu)\big ].
\end{equation}
We introduce the functions $\bar{f}(x,u):=f(x,xu)$ and $\bar{g}(x,u):=g(x,xu)$ and we notice that they satisfy $\bar f(x,xu)=|x|^{N}\O(\|(x,u)\|)$ and
$\bar{g}(x,u)=\O(|x|^{M+1}) + \O(|x|^{M}\|u\|^2)$.  In addition, since $f$
and $g$ are analytic functions at $(x,y)=(0,0)$ and $M\geq N$, so are
the functions $x^{-N+1}\bar{f}(x,u), x^{-N}\bar{f}(x,u)$ and $x^{-N}\bar{g}(x,u)$ at $(x,u)=(0,0)$. We rewrite~\eqref{blowupautonomous} as
\begin{equation}\label{autonomousanalytic}
\frac{du}{dx} = \frac{1}{-ax +x^{-N+1}\bar{f}(x,u)} \big [(a\Id   + B_1x^{M-N} +x^{-N} \bar f(x,u) )u  + x^{-N} \bar{g}(x,u)\big ].
\end{equation}
We consider now the system
\begin{equation}\label{fieldhyperbolic}
\dot{x}=-ax + x^{-N+1}\bar{f}(x,u),\qquad \dot{u}=(a\Id   + B_1 x^{M-N} +x^{-N} \bar f(x,u))u  + x^{-N} \bar{g}(x,u).
\end{equation}
The origin is a fixed point, having a single hyperbolic direction corresponding to the eigenvalue
$-a$. Indeed, when $M>N$, the linear part of the field in~\eqref{fieldhyperbolic} at $(x,u)=(0,0)$ is
$A=\text{diag}(-a, a \Id)$.
However, when $M=N$, this linear part is
$$
A=\left (\begin{array}{cc} -a & 0 \\  \partial_x [x^{-N} \bar{g}(0,0)] & a\Id +B_1\end{array}\right),
$$
which may be not diagonal.
Using the non-resonance condition $-a\notin \Spec(B_1 + la \Id)$ if
$M=N$ and that $a\neq 0$ if $M>N$, one deduces from the theory of nonresonant invariant manifolds (\cite{CabreFL03a})
that the corresponding to the eigenvalue $-a$ one-dimensional invariant manifold is the graph of a
real function $h$, analytic at $x=0$, which is a solution of~\eqref{autonomousanalytic}.
Let $h(x)=cx+\O(x^2)$ (the constant $c$ is $0$ if either $M>N$ or $\partial_x [x^{-N} \bar{g}(0,0)]=0$). Then $y=xh(x)=\O(x^2)$ is  a real analytic
solution of~\eqref{graphautonomous} tangent to the $x$ axis.
\end{proof}

\begin{claim}
\label{example:optimal_Gevrey_N_equal_M} Let $X$ be the
$2\pi$-periodic vector field defined by
$$
X(x,y,t)=\big (-ax^{N}, bx^{N-1}y + x^{N+1}\cos t\big ), \qquad (x,y)\in \mathbb{R}^2
$$
with $a,b>0$. The parabolic stable manifold has a formal Taylor expansion at $0$ which is Gevrey of order exactly $\gamma=1/(N-1)$.
\end{claim}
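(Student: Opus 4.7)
The plan is to solve the graph invariance equation~\eqref{inveqX} formally in Fourier variables in $t$ and read off both an upper bound from Theorem~\ref{formal_theorem} and a matching lower bound from the closed form. Writing $\hat\Phi(x,t)=\sum_{n\ge 2}\varphi_n(t)x^n$ with each $\varphi_n$ sought $2\pi$-periodic and substituting into~\eqref{inveqX} for this particular $X$, matching coefficients of $x^k$ yields, for $k\ge 2$,
\[
\varphi_k'(t) = \bigl(b+a(k-N+1)\bigr)\varphi_{k-N+1}(t)+\delta_{k,N+1}\cos t,
\]
with the convention $\varphi_j\equiv 0$ for $j\le 1$.

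I would pass to Fourier coefficients $\hat\varphi_k(m)=\tfrac{1}{2\pi}\int_0^{2\pi}\varphi_k(t)e^{-{\rm i}mt}\dd t$, which turns the recursion into
\[
{\rm i}m\,\hat\varphi_k(m) = \bigl(b+a(k-N+1)\bigr)\hat\varphi_{k-N+1}(m)+\tfrac{1}{2}\delta_{k,N+1}(\delta_{m,1}+\delta_{m,-1}).
\]
The mode $m=0$ is homogeneous and, since $b+a(k-N+1)>0$ for $a,b>0$ and $k\ge 2$, forces $\hat\varphi_j(0)\equiv 0$ for all $j$. For $|m|\ge 2$ the recursion is again homogeneous with zero initial data, so all such modes vanish. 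The modes $m=\pm 1$ are first excited only at $k=N+1$ and then propagated by jumps of $N-1$ in $k$, so $\hat\varphi_k(\pm 1)$ can be nonzero only when $k=k_p:=N+1+p(N-1)$ with $p\ge 0$ integer; iterating then yields
\[
\hat\varphi_{k_p}(1)=-\tfrac{{\rm i}^{1-p}}{2}\,A_p,\qquad A_p:=\prod_{j=1}^{p}\bigl(b+2a+aj(N-1)\bigr),
\]
and hence $\varphi_{k_p}(t)=-A_p\cos\bigl(t+(1-p)\pi/2\bigr)$.

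Finally, I would combine this closed form with Theorem~\ref{formal_theorem}: by the Remark preceding Section~\ref{sec:examples}, the stroboscopic Poincar\'e map at any reference time $t_0$ has the form~\eqref{system} with $M=N$, $a>0$, scalar $B_1\propto b>0$ satisfying $B_1+la\ne 0$ for every $l\ge 2$, and $d'=0$, so Theorem~\ref{formal_theorem} delivers the upper bound $|\varphi_n(t_0)|\le c_1 c_2^n n!^{1/(N-1)}$. For the matching lower bound, every factor of $A_p$ obeys $b+2a+aj(N-1)\ge aj(N-1)$, giving $\sup_t|\varphi_{k_p}(t)|=A_p\ge (a(N-1))^p\,p!$; since $k_p=p(N-1)+N+1$, Stirling shows that $(a(N-1))^p\,p!\asymp C^{k_p}\,k_p!^{1/(N-1)}$ up to a polynomial factor in $k_p$, so no estimate $\sup_t|\varphi_n(t)|\le c_1 c_2^n n!^{\gamma'}$ can hold for any $\gamma'<1/(N-1)$. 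This pins the Gevrey order of $\hat\Phi$ to exactly $\gamma=1/(N-1)$. The main obstacle is the Fourier bookkeeping that confines the nonzero coefficients to $m=\pm 1$ and to the arithmetic progression $k=k_p$; once this is established, both bounds are immediate from the explicit product defining $A_p$.
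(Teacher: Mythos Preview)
Your proof is correct and takes a genuinely different route from the paper. The paper solves the ODE explicitly: it integrates $\dot x=-ax^N$ in closed form, substitutes into the $y$-equation, and expresses the stable manifold $\varphi(x,t)$ as an integral which it then recognizes as a confluent hypergeometric function $\Psi$; the known large-argument asymptotic expansion of $\Psi$ furnishes the formal series together with the exact $\Gamma$-growth of its coefficients, giving both the upper and lower Gevrey bounds simultaneously. You instead work directly with the formal power series in $x$, use Fourier decomposition in $t$ to collapse the recursion to a scalar one along the arithmetic progression $k_p=N+1+p(N-1)$, and read off the closed product $A_p=\prod_{j=1}^p(b+2a+aj(N-1))$. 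This is more elementary and bypasses the special-function machinery entirely. One minor observation: since you already have the explicit product, the upper bound $A_p\le (b+a(N+1))^p\,p!$ follows at once from $b+2a+aj(N-1)\le (b+a(N+1))j$ for $j\ge 1$, so the appeal to Theorem~\ref{formal_theorem} is not strictly needed---your formula gives both directions directly. What your approach forgoes is the paper's closed-form expression for the \emph{actual} (not merely formal) stable manifold, which is of independent interest but not required for the claim.
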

\begin{proof}
We first note that Theorem~\ref{mtexistencesolution}
and~\ref{thm:uniqueness}  assure the existence and uniqueness of the
parabolic stable manifold when $a,b>0$.

For any initial conditions $x_0,y_0, t_0$, the associated flow  is
given by
\begin{align*}
x(t)&=\frac{x_0}{\big (1+ a (N-1) (t-t_0) x_0^{N-1}\big )^{\alpha}} \\
y(t)&= (1+a(N-1)(t-t_0)x_0)^{\beta} \left (y_0 + \int_{t_0}^t \frac{x_0^{N+1} \cos t }{\big (1+a(N-1) (t-t_0) x_0^{N-1}\big )^{\alpha (N+1)+\beta}}\,dt\right ),
\end{align*}
where we have introduced $\beta=b/a$. Since we are looking for the
stable invariant manifold, we want the solution such that
$(x(t),y(t))\to (0,0)$ as $t\to \infty$. Hence, since $\beta>0$, we
need to impose
$$
y_0=- x_0^{N+1} \int_{t_0}^\infty \frac{1}{\big (1+a(N-1) (t-t_0) x_0^{N-1}\big )^{\alpha (N+1)+\beta}} \cos t \,dt.
$$
Therefore the stable invariant manifold is described by
$$
y=\varphi(x,t) = -x^{N+1} \int_{0}^{\infty} \frac{1}{\big (1+a(N-1) \tau x^{N-1}\big )^{\alpha (N+1)+\beta}} \cos (t+\tau) \,d\tau.
$$
Notice that $\varphi$ is $2\pi$-periodic with respect to $t$.
Now we will prove that the series of $\varphi$ at $x=0$ is Gevrey of order $\gamma=1/(N-1)$. First we introduce
$\sigma=\alpha(N+1)+\beta>1$ and we decompose
\begin{align*}
\varphi(x,t)  &= - x^{N+1} \int_{0}^{\infty}\frac{e^{i(t+\tau)} + e^{-i(t+\tau)} }{2\big (1+a(N-1) \tau x^{N-1}\big )^\sigma} \,d\tau \\
&= - \mu x^2 e^{it} \int_{0}^{\infty}\frac{e^{\mu i x^{-(N-1)}u}}{2(1+u)^{\sigma}}\, du - \mu x^2 e^{-it} \int_{0}^{\infty}\frac{e^{-\mu i x^{-(N-1)}u}}{2(1+u)^{\sigma}}\, du ,
\end{align*}
with $\mu^{-1} = a(N-1)$.
We take $\theta \in (0,\pi)$ and change the integration path in the above integrals as:
$$
\varphi(x,t) = -\mu x^2 e^{it} \int_{0}^{\infty e^{i\theta} }\frac{e^{\mu i x^{-(N-1)}u}}{2(1+u)^{\sigma}}\, du
- \mu x^2 e^{-it}\int_{0}^{\infty e^{-i\theta}}\frac{e^{-\mu i x^{-(N-1)}u}}{2(1+u)^{\sigma}}\, du .
$$
It is well known that these integrals define the confluent
hypergeometric functions $\Psi$ (see~\cite[p.\ 280]{Erdelyi53}) so that
$$
\varphi(x,t) = - \frac{\mu}{2}  x^{2} \left ( e^{it} \Psi\big (1,-\sigma+2, \mu i x^{-(N-1)}\big ) + e^{-it} \Psi\big (1,-\sigma+2, -\mu i x^{-(N-1)}\big )\right ).
$$
By~\cite[p.\ 302]{Erdelyi53}, an asymptotic expansion of
$\Psi(a,c,z)$ for large $|z|$ is
$$
\Psi(a,c,z) = \sum_{n\geq 0 } (-1)^n \frac{(a)_n (a-c+1)_n}{n!} z^{-a-n},
$$
with $(a)_n=\Gamma(a+n)/\Gamma(a)$. Therefore, the Taylor formal series at $0$ of $\varphi$ is
\begin{align*}
\hat \varphi(x,t) =& -\frac{\mu}{2} x^{2} \frac{1}{\Gamma(\sigma)} \sum_{n\geq 0} (-1)^n \Gamma(n+\sigma) \left (\big (\mu i x^{-(N-1)})^{-n-1} + \big (-\mu i x^{-(N-1)})^{-n-1}\right ) \\
=&\mu x^2 \frac{1}{\Gamma(\sigma)} \cos t \sum_{n\geq 0} \Gamma(2n+1+\sigma)  \frac{x^{(N-1)(2n+2)}}{\mu^{2n+2}} \\
&-\mu x^2 \frac{1}{\Gamma(\sigma)} \sin t \sum_{n\geq 0} \Gamma(2n+\sigma)  \frac{x^{(N-1)(2n+1)}}{\mu^{2n+1}}.
\end{align*}
This formal series is Gevrey of order $\gamma=1/(N-1)$. Indeed,
comparing $\Gamma(k+\sigma)$ with $\Gamma(1+
\gamma(N-1)(k+1))=\Gamma(k+2)$  we conclude that $\hat \varphi$ is
a Gevrey formal series of order exactly $\gamma$.
\end{proof}

\subsection{Aplications to Celestial Mechanics}
\label{sec:celestial_mechanics}

The three body problem describes the motion of three point bodies
evolving under their mutual Newtonian gravitational attraction. The
restricted three body problem is the simplification of the three
body problem obtained by assuming that one of the bodies has zero
mass. Consequently, the bodies with mass, usually called
\emph{primaries}, describe Keplerian orbits. See, for instance,
\cite{MeyerH92}.

Among the several instances of the restricted three body problem one
finds the Sitnikov problem, which is the special case when the
primaries move in ellipses and the massless body in the line
orthogonal to the plane of the primaries through their center of
mass. The relevant parameter in the Sitnikov problem is the
eccentricity~$e$ of the orbits of the primaries. When $e=0$, the
Sitnikov problem is integrable. Another important subproblem is the
so called restricted planar three body problem (RPTBP), when the
massless body moves in the plane where the primaries lie, while the
latter describe Keplerian ellipses. In this case, a relevant
parameter is the mass ratio of the primeries, $\mu$, which can be
assumed to be in $[0,1/2]$. When $\mu =0$, the RPTBP is integrable.

In both cases, the parabolic infinity can be written as
\begin{equation}
\label{eq:system_in_McGehee_coordinates}
 \begin{aligned}
 \dot x&= -\frac{1}{4} (x+y_1)^3(x+(x+y_1)^3\O_0)\\
 \dot y_1&= \frac{1}{4} (x+y_1)^3(y_1+(x+y_1)^3\O_0)\\
 \dot{{\tilde y}}&=\frac{1}{4}(x+y_1)^2(x-y_1) {\tilde y} + (x+y_1)^5\O_0\\
 \dot t &=1
 \end{aligned}
\end{equation}
where $(x,y_1,\tilde y)\in \R\times \R\times \R^n$ and $\O_k$ stands
for a function in $(x,y_1,\tilde y,t)$, $1$-periodic with respect
to~$t$, analytic in a neighborhood of $x=y_1=0$, $\tilde y = 0$ and
of order $\O(\|(x,y_1,\tilde y)\|^k)$. In the case of the Sitnikov
problem, $n=0$, while $n=2$ in the RPTBP. See~\cite{Moser01} for the
derivation of the above equations in Sitnikov problem
and~\cite{GuardiaMSS17} in the planar restricted three body problem.

It is immediate to check that any stroboscopic Poincar\'e map of the
system~\eqref{eq:system_in_McGehee_coordinates} has the form
\[
\begin{pmatrix}
q \\ p \\ z
\end{pmatrix}
\mapsto
\begin{pmatrix}
x -\frac{1}{4} x^4 +y_1 \O_3+ \O_5\\
y_1  + \frac{1}{4}y_1 x^3 + y_1^2 \O_2 +\O_5\\
{\tilde y}  + \frac{1}{4}{\tilde y} x^3 + y_1 {\tilde y} \O_2 +\O_5\\
\end{pmatrix},
\]
which has the form~\eqref{system} with
\[
d=1+n, \quad d'=0, \quad N = M = 4, \quad a = \frac{1}{4}, \quad B_1
= \frac{1}{4}\Id_{1+n}.
\]
Consequently $\alpha = 1/3$. Since the eigenvalues of $B_1$ are
positive, Theorems~\ref{formal_theorem}, \ref{mtexistencesolution}
and~\ref{thm:uniqueness} apply. Hence we have

\begin{corollary}
The parabolic infinity in the Sitnikov problem (for any $e\in
[0,1)$) and in the RPTBP (for any $\mu \in [0,1/2])$ possesses
invariant manifolds which are $1/3$-Gevrey.
\end{corollary}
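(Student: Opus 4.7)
The proof is essentially a verification that the stroboscopic Poincaré map displayed just before the corollary fits exactly into the framework of Theorems~\ref{formal_theorem} and~\ref{mtexistencesolution}. The plan is to read off the relevant constants, check the two non-degeneracy conditions, and then apply the theorems to get a $\gamma$-Gevrey parabolic stable manifold with $\gamma=1/3$.

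First, I would inspect the displayed map
$$
F(x,y_1,\tilde y) = \bigl( x-\tfrac{1}{4}x^4 + y_1\O_3 + \O_5,\; y_1+\tfrac{1}{4}y_1 x^3 + y_1^2\O_2 + \O_5,\; \tilde y + \tfrac{1}{4}\tilde y\, x^3 + y_1\tilde y\,\O_2 + \O_5\bigr)
$$
and match it against the normal form~\eqref{system}. Writing $y=(y_1,\tilde y)\in\R^{1+n}$ and setting $z=0$ (no $z$-component since $d'=0$), one has $N=M=4$, $a=\tfrac{1}{4}$, $B_1=\tfrac{1}{4}\Id_{1+n}$, and $B_2$ is vacuous. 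The nonlinear remainders $f_{\ge N+1}$ and $g_{\ge M+1}$ clearly fit since all the error terms are of order $5$ or higher, and the pure $x$-dependence of $f_N$ and $g_M$ is zero (i.e.\ $f_N(x,0,0)=0$, $g_M(x,0,0)=0$ and its first derivatives in $y$ vanish), as required in Section~\ref{sec:notation}.

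Next I would verify the hypotheses of Theorem~\ref{formal_theorem}. Since $N=M$, the relevant condition is that $B_1+\ell a\,\Id$ is invertible for every $\ell\ge 2$. With $B_1=\tfrac{1}{4}\Id_{1+n}$ and $a=\tfrac{1}{4}$ this matrix equals $\tfrac{1+\ell}{4}\Id_{1+n}$, which is clearly invertible. From~\eqref{defalpha}, since $N=M$, we get $\gamma=\alpha=1/(N-1)=1/3$. For Theorem~\ref{mtexistencesolution} in the case $M\ge N$, we need $a>0$, which holds, and a spectral condition on $C$; but $C$ is vacuous because $d'=0$, so the condition is satisfied trivially. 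Theorem~\ref{mtexistencesolution} therefore delivers a real analytic $K$ on a sector $S(\beta,\rho)$ with $\beta<\pi/3$ that is $1/3$-Gevrey asymptotic to the formal solution of the invariance equation.

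Finally, to identify this solution with the full parabolic infinity manifold (and ensure uniqueness), I would invoke Theorem~\ref{thm:uniqueness}. Its hypotheses are $\Spec C\subset\{|z|>1\}$ (vacuous here) and $\Spec B_1\subset\{\Re z>0\}$, which holds since the unique eigenvalue of $B_1=\tfrac14\Id_{1+n}$ is $\tfrac14>0$. Thus the local parabolic stable manifold $W^s_\rho$ coincides with the image of $K$, and its analytic parametrization is $1/3$-Gevrey. The same argument applied to $F^{-1}$ (for which $a$ becomes $-\tfrac14<0$ and the roles of stable/unstable swap, as noted in the remark before Corollary~\ref{cor:Fatou}) gives the parabolic unstable manifold with the same regularity. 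I do not anticipate any real obstacle: the only work is bookkeeping, and the main point is that the restricted three body systems in~\eqref{eq:system_in_McGehee_coordinates} fall into the \emph{degenerate} case $M=N$ with $d'=0$, so the only non-trivial non-resonance one has to rule out is the one involving $B_1+\ell a\Id$, which is immediate because $B_1$ is a positive scalar multiple of the identity.
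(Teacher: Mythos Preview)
Your proposal is correct and follows exactly the same route as the paper: identify the constants $d=1+n$, $d'=0$, $N=M=4$, $a=\tfrac14$, $B_1=\tfrac14\Id_{1+n}$ from the displayed Poincar\'e map, observe that the eigenvalues of $B_1$ are positive so that Theorems~\ref{formal_theorem}, \ref{mtexistencesolution} and~\ref{thm:uniqueness} all apply, and conclude $\gamma=1/(N-1)=1/3$. In fact you give more detail than the paper, which simply records the constants and states that the theorems apply.
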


As we have already mentioned, Theorems~\ref{formal_theorem},
\ref{mtexistencesolution} only provide upper bounds on the
coefficients of the expansion of the invarariant manifold. However,
in view of Mart\'{\i}nez and Sim\'o's numerical computations \cite{MartinezS14} and
the example in Claim~\ref{example:optimal_Gevrey_N_equal_M}, where a
time periodic perturbation of a system with a parabolic fixed point
is considered, we present the following conjecture.

\begin{conjecture}
\label{conjecture} The parabolic infinity in the Sitnikov problem,
with $e\in (0,1)$, and in the RPTBP, with $\mu \in (0,1/2]$,
possesses invariant manifolds which are \emph{precisely}
$1/3$-Gevrey, that is, they are not $\gamma'$-Gevrey for any $0\le
\gamma' < 1/3$.
\end{conjecture}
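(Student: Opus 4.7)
The plan is to reduce Conjecture~\ref{conjecture} to a quantitative splitting-of-manifolds statement via Borel--Laplace resummation. I would proceed in three steps, the last of which is the main obstacle.

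First, for $e\in(0,1)$ in the Sitnikov problem and $\mu\in(0,1/2]$ in the RPTBP, one applies Theorems~\ref{mtexistencesolution} and~\ref{thm:uniqueness} both to the stroboscopic Poincar\'e map $F$ of~\eqref{eq:system_in_McGehee_coordinates} and to its inverse. This produces two analytic parameterizations $K^{s}$ and $K^{u}$ of the stable and unstable parabolic manifolds, defined on sectors $S^{s}$ and $S^{u}$ of opening arbitrarily close to $\alpha\pi=\pi/3$ around the positive and negative real $t$-axis respectively. Both are $\gamma$-Gevrey asymptotic to a common formal series $\hat K$, which, by the uniqueness statement of Theorem~\ref{formal_theorem}, is the same formal power series once the first coefficients are normalized.

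Second, I would invoke the Borel--Laplace theory of $1/(N-1)$-summable series (see~\cite{Balser94}). Since $\hat K$ is $1/3$-Gevrey, its Borel transform of order $3$ is a convergent power series $\mathcal{B}\hat K$, analytic in a neighborhood of $0$ in the Borel plane. The parameterizations $K^{s}$ and $K^{u}$ are realized as Laplace integrals of analytic continuations of $\mathcal{B}\hat K$ along the respective directions. Crucially, $\hat K$ is $\gamma'$-Gevrey for some $\gamma'<1/3$ if and only if $\mathcal{B}\hat K$ extends to an entire function of suitable exponential type, in which case Laplace summation along every ray produces a single analytic object agreeing with both $K^{s}$ and $K^{u}$ on their domains. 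In particular, $K^{s}$ and $K^{u}$ would then coincide after analytic continuation, and the two parabolic manifolds would merge into a single analytic invariant manifold.

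Therefore, the conjecture reduces to proving the converse statement: for every $e\in(0,1)$ and every $\mu\in(0,1/2]$, the stable and unstable parabolic manifolds of infinity \emph{do not coincide}. This is the main obstacle. The difference $K^{u}(t)-K^{s}(t)$ is exponentially small across the Stokes directions, of order $\exp(-c/|t|^{N-1})$ with $N-1=3$, and the relevant Stokes constant lies beyond all orders of a formal perturbation expansion. For $e$ near $0$ and $\mu$ near $0$, I would attempt a Melnikov- or resurgence-type computation in the spirit of~\cite{GuardiaMSS17} to extract the leading behavior of the Stokes constant in the respective perturbation parameter; a non-vanishing leading term would prove the conjecture in a punctured neighborhood of the integrable endpoints. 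For the full parameter intervals, only the numerical evidence of~\cite{MartinezS14} is presently available, and a complete proof would require uniform, global control of the Stokes constant, which is well beyond the techniques developed in the present paper.
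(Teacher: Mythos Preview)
The statement you are attempting to prove is \emph{Conjecture}~\ref{conjecture}. The paper does not prove it and does not claim to; it is explicitly presented as a conjecture, supported only by the numerical evidence of Mart\'{\i}nez--Sim\'o and by the analogy with the model equation in Claim~\ref{example:optimal_Gevrey_N_equal_M}. There is therefore no ``paper's own proof'' to compare your proposal against.

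As a programme for attacking the conjecture, your outline is in the right spirit but contains a genuine gap in the second step. You assert that if $\hat K$ were $\gamma'$-Gevrey for some $\gamma'<1/3$, then its order-$3$ Borel transform would be entire and its Laplace sum along any direction would coincide with both $K^{s}$ and $K^{u}$. The first implication is fine, but the identification of the Laplace sum with $K^{s}$ and $K^{u}$ is not justified by what the paper proves. Theorem~\ref{mtexistencesolution} only guarantees that $K^{s}$ (resp.\ $K^{u}$) is $1/3$-Gevrey asymptotic to $\hat K$ on a sector of opening $\beta<\alpha\pi=\pi/3$. This opening is \emph{strictly below} the critical opening $\pi/3$ required by Watson's lemma for uniqueness of $1/3$-Gevrey asymptotic expansions; on such narrow sectors a given $1/3$-Gevrey series can have many analytic realizations differing by flat terms of order $\exp(-c|t|^{-3})$. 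Consequently, even if the formal series happened to be $1/3$-summable (or better), nothing in the present setup forces $K^{s}$ or $K^{u}$ to be \emph{the} sum, and your reduction ``better Gevrey $\Rightarrow$ manifolds coincide'' does not go through as stated. To make the argument work one would first need to upgrade Theorem~\ref{mtexistencesolution} to produce the parameterizations on sectors of opening \emph{strictly larger} than $\pi/3$, or to prove directly that $K^{s}$ and $K^{u}$ are the Borel--Laplace sums in the appropriate directions; neither is done in the paper.

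Your own assessment of the third step is accurate: establishing that the splitting (equivalently, the relevant Stokes constant) is nonzero for all $e\in(0,1)$ or all $\mu\in(0,1/2]$ is the heart of the matter and is well beyond the scope of the paper, which is precisely why the statement is left as a conjecture.
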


\section{Formal parameterization of the manifold}\label{sec:formal}
In this section we obtain a formal solution of the equation $F\comp
K = K\comp R$, that is, a formal series which solves the equation at
all orders. We will need also a precise expression of the
coefficients in order to obtain Gevrey estimates for them.

We will use the following notation, that arises from the
Fa\`a-di-Bruno formula. Assuming that $f$ and $g$ are two
$\mathcal{C}^{\infty}$ functions such that $f \circ g$ makes sense,
$f(0)=0$ and $g(0)=0$, we have that  $(f\comp g)_l=\frac{1}{l!}
D^l(f\comp g)(0)$ satisfies
\begin{equation}\label{Faa_power}
(f\circ g)_l =
\sum_{k=1}^{l} \sum_{\begin{array}{c} \scriptstyle{l_1+\cdots+l_k=l} \\
\scriptstyle{l_i\ge 1} \end{array}}
 f_k [g_{l_1}, \cdots , g_{l_k}].
\end{equation}
Here $f_k$ and $g_k$ are $k$-multilinear symmetric maps. This
expression also holds when dealing  with the composition of formal
power series $\hat f(w)= \sum_{l\geq 1} f_l w^l$ and $\hat g(v)=
\sum_{l\geq 1} g_l v^l$. The coefficient of the~$l$ order term of
the formal composition $f\comp g$ is given by~\eqref{Faa_power}. It
depends only on $ f_{\leq l}(w) = \sum_{k= 1}^l f_k w^k$ and $
g_{\leq l}(v) = \sum_{k= 1}^l g_k v^k$. The only term of $(\hat
f\comp \hat g)_l$ in which $f_l$ appears is $f_l g_1^l$, and the
only term in which $g_l$ appears is $f_1 g_l$.

\begin{comment}
When the invariance equation $F\comp K=K\comp R$ is considered in the sense
of composition of formal power series, we have to look for a formal expansion
$\hat K(t)= \sum_{l\geq 1} K_l t^l \in \R[[t]]^{1+d+d'}$
and a polynomial $R(t)= R_1 t + \dots + R_m t^m$
satisfying
\begin{equation*}
\label{invariance_formal}
\sum_{k=1}^{l} \sum_{\begin{array}{c} \scriptstyle{l_1+\cdots+l_k=l} \\
\scriptstyle{1\leq l_i} \end{array}}
 F_k [K_{l_1}, \cdots , K_{l_k}] =
\sum_{k=1}^{l} \sum_{\begin{array}{c} \scriptstyle{l_1+\cdots+l_k=l} \\
\scriptstyle{1\leq l_i} \end{array}}
 K_k  R_{l_1} \cdots R_{l_k}
\end{equation*}
for all $l\geq 1$.
\end{comment}

We introduce the maps
\begin{equation}
\label{defL}
\L(x,y,z)=\left (\begin{array}{c} x-a  x^N \\ y
\\ C z \end{array}\right ),\qquad G(x,y,z)= F(x,y,z)-\L(x,y,z),
\end{equation}
for $l\geq 2$, the family of operators
\begin{equation}\label{defAl}
\A_l=\begin{cases}
-(B_1+al\Id)^{-1},  &\quad \text{if}\;\;\; N=M, \\ -B_1^{-1},  & \quad \text{if}\;\;\; M<N, \\
-(la)^{-1}\Id, &\quad \text{if}\;\;\; M>N
\end{cases}
\end{equation}
and
$$
L=\min\{N,M\}.
$$

\begin{proposition}\label{construction}
There exists a unique $b\in \R$ such that for any $c\in\R$ there
exists a unique formal power series $\hat{K}(t)= \sum_{l=1}^{\infty}
K_l t^l $, $K_l\in \R^{1+d+d'}$ with $K_1=(1,0,0)^\top$ and $K^x_N=c$,
such that $R(t)=t-a t^N +b t^{2N-1}$ and ${K}$ satisfies
 the equation $F\comp K -  K\comp  R=0$ formally.

The coefficients of $K$ and $R$ can be given
inductively. For $l>1$ we have
\begin{align*}
K_l^z & = -(C-\Id)^{-1} E_{l}^z, \\
K_l^y & =\begin{cases}
\displaystyle  \A_l E_{l+L-1}^y, &\text{if}\quad  N< M,\\
\displaystyle \A_l (E_{l+L-1}^y+B_2 K_l^z), &\text{if}\quad  N\ge M,
\end{cases}
\\
K_l^x & =\begin{cases} \displaystyle  \frac{-1}{a(l-N)} (E_{l+N-1}^x
+ v^\top K_l^y+ w^\top K_l^z),
&\text{if}\quad  l\neq N,\\
\displaystyle c, &\text{if}\quad  l= N,
\end{cases} \\
\intertext{and}\\
R_{l+N-1}&  =\begin{cases}
\displaystyle  0, &\text{if}\quad l>  1, \;\; l\ne N,\\
\displaystyle b= E_{2N-1}^x + v^\top K_N^y+ w^\top K_l^z,
&\text{if}\quad  l= N;
\end{cases}
\end{align*}
where
\begin{align}
E_{l+N-1}^x = &
 -a \sum_{\begin{array}{c}\scriptstyle{l_1+\cdots+l_N=l+N-1} \nonumber \\
\scriptstyle{1\leq l_i \leq l-1}
\end{array}} \prod_{i=1}^N K^x_{l_i} +
\sum_{k=N}^{l+N-1}
\sum_{\begin{array}{c} \scriptstyle{l_1+\cdots+l_k=l+N-1} \\
\scriptstyle{1\leq l_i \leq l-1}
\end{array}} G^x_k[K_{l_1},\cdots, K_{l_k}] \label{Ex} \\
& - \sum_{k=2}^{l-1} K^x_k
\sum_{\begin{array}{c} \scriptstyle{l_1+\cdots+l_k=l+N-1} \\
\scriptstyle{1\le l_i \le l+N-2} \end{array}} \prod_{i=1}^k R_{l_i},
 \\
E_{l+L-1}^y = &
  \sum_{k=M}^{l+L-1}
\sum_{\begin{array}{c} \scriptstyle{l_1+\cdots+l_k=l+L-1} \\
\scriptstyle{1\leq l_i \leq \min\{l-1,l+L-M\}}
\end{array}} G^y_k[K_{l_1},\cdots,K_{l_k}] \nonumber
\\ &-
\sum_{k=M-L+2}^{\min\{l-1, l+L-N\}}K_k^y \sum_{\begin{array}{c}
\scriptstyle{l_1+\cdots+l_k=l+L-1} \\ \scriptstyle{1\le l_i \le
l+N-2}
\end{array}} \prod_{i=1}^k R_{l_i}
\label{Ey} \\
\intertext{and} \nonumber \\
\label{Ez} E_{l}^z = &  \sum_{k=2}^{l}
\sum_{\begin{array}{c} \scriptstyle{l_1+\cdots+l_k=l} \\
\scriptstyle{1\leq l_i \le l-1}
\end{array}} G^z_k[K_{l_1},\cdots,K_{l_k}]
- \sum_{k=2}^{l-N+1}K_k^z \sum_{\begin{array}{c}
\scriptstyle{l_1+\cdots+l_k=l} \\ \scriptstyle{1\le l_i \le l+N-2}
\end{array}} \prod_{i=1}^k R_{l_i}.
\end{align}
%
%\begin{equation}\label{Ex}
%\begin{aligned}
%E_{l+N-1}^x = &
%-a \sum_{\begin{array}{c}\scriptstyle{l_1+\cdots+l_N=l+N-1} \\
%\scriptstyle{1\leq l_i \leq l-1}
%\end{array}} \prod_{i=1}^N K^x_{l_i} +
%\sum_{k=N}^{l+N-1}
%\sum_{\begin{array}{c} \scriptstyle{l_1+\cdots+l_k=l+N-1} \\
%\scriptstyle{1\leq l_i \leq l-1}
%\end{array}} G^x_k[K_{l_1},\cdots, K_{l_k}]\\\
%& - \sum_{k=2}^{l-1} K^x_k
%\sum_{\begin{array}{c} \scriptstyle{l_1+\cdots+l_k=l+N-1} \\
%\scriptstyle{1\le l_i \le l+N-2} \end{array}}
%\prod_{i=1}^k R_{l_i},
%\end{aligned}
%\end{equation}
%
%\begin{equation}\label{Ey}
%\begin{aligned}
%E_{l+L-1}^y = &
%\sum_{k=M}^{l+L-1}
%\sum_{\begin{array}{c} \scriptstyle{l_1+\cdots+l_k=l+L-1} \\
%\scriptstyle{1\leq l_i \leq \min\{l-1,l+L-M\}}
%\end{array}} G^y_k[K_{l_1},\cdots,K_{l_k}]
%\\ &-
%\sum_{k=M-L+2}^{\min\{l-1, l+L-N\}}K_k^y
%\sum_{\begin{array}{c}
%\scriptstyle{l_1+\cdots+l_k=l+L-1} \\ \scriptstyle{1\le l_i \le l+N-2}
%\end{array}} \prod_{i=1}^k R_{l_i}
%\end{aligned}
%\end{equation}
%and
%\begin{equation}\label{Ez}
%\begin{aligned}
%E_{l}^z = & \sum_{k=2}^{l}
%\sum_{\begin{array}{c} \scriptstyle{l_1+\cdots+l_k=l} \\
%\scriptstyle{1\leq l_i \le l-1}
%\end{array}} G^z_k[K_{l_1},\cdots,K_{l_k}]
%-
%\sum_{k=2}^{l-N+1}K_k^z \sum_{\begin{array}{c}
%\scriptstyle{l_1+\cdots+l_k=l} \\ \scriptstyle{1\le l_i \le l+N-2}
%\end{array}} \prod_{i=1}^k R_{l_i}.
%\end{aligned}
%\end{equation}

In addition, if $1\le l\leq M-L+1$, $K^y_l=0$.
\end{proposition}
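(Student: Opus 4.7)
The plan is to solve the formal invariance equation $F\comp \hat K = \hat K\comp \hat R$ inductively on $l$, comparing coefficients of $t^m$ at carefully chosen orders $m$ for each vector component. The key observation is that, because $D\L(0)$ is the identity on the $x$ and $y$ directions, the contribution of $K_l$ cancels at order $m = l$ in both the $x$- and $y$-equations; one must therefore examine the higher orders $m = l + N - 1$ for the $x$-equation and $m = l + L - 1$ for the $y$-equation (with $L = \min\{N, M\}$) to extract a non-trivial linear equation for $K^x_l$ and $K^y_l$. For the $z$-component, $C - \Id$ is invertible, so $K^z_l$ is already determined at order $m = l$.

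First I would handle the base cases. Comparing coefficients of $t$ in $F\comp K = K\comp R$ and using $K_1 = (1,0,0)^\top$ together with~\eqref{linearpartF} immediately forces $R_1 = 1$. Expanding the $x$-component at order $N$ via~\eqref{Faa_power}, and using $f_N(K_1) = f_N(1,0,0) = 0$, one finds $K^x_N - a = R_N + K^x_N$, whence $R_N = -a$; in particular $K^x_N$ is not constrained at this order and is declared the free parameter $c$.

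For the inductive step, suppose all $K_{l'}$ for $l' < l$ and all coefficients $R_m$ for $m < l + N - 1$ are known, with the claimed structure ($R_m = 0$ outside $\{1, N\}$, plus $R_{2N-1} = b$ if $l > N$). At order $l$, the $z$-equation isolates $K^z_l$ as $(C - \Id)K^z_l = -E^z_l$ with $E^z_l$ as in~\eqref{Ez}, solvable since $1 \notin \Spec C$. Next, at order $l + L - 1$, the $y$-equation determines $K^y_l$: in the regime $L = M \leq N$ the terms $x^{M-1}(B_1 y + B_2 z)$ of $G^y$ contribute $B_1 K^y_l + B_2 K^z_l$, while in the regime $L = N \leq M$ the single factor $R_N = -a$ among $l$ copies of $R_1$ in $(K\comp R)^y_{l+L-1}$ contributes $-al K^y_l$. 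Combining these in each of the three cases $M < N$, $M = N$, $M > N$ produces precisely the operator $\A_l$ of~\eqref{defAl}, whose invertibility is exactly the content of the hypotheses on $B_1$. Finally, at order $l + N - 1$ the $x$-equation reduces to $a(l-N)K^x_l + v^\top K^y_l + w^\top K^z_l + E^x_{l+N-1} = R_{l+N-1}$. For $l > 1$, $l \ne N$, one sets $R_{l+N-1} = 0$ and solves for $K^x_l$ using $a(l - N) \ne 0$; for $l = N$, the coefficient vanishes, so one sets $K^x_N = c$ and solves for $R_{2N-1} = b$. This $b$ is independent of $c$ because the recursions show that $K^y_N$ and $K^z_N$ depend only on $K_{l'}$ with $l' < N$.

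The main obstacle will be the bookkeeping for the auxiliary expressions $E^x_{l+N-1}$, $E^y_{l+L-1}$, and $E^z_l$: one must carefully verify that they involve only $K_{l'}$ for appropriate $l' < l$ (or $l' \le l + L - M < l$ in the $y$-equation when $L = N < M$, so as to exclude spurious $K^y_l$ contributions) and previously determined $R_m$. The lower bound $k \ge M$ in the first sum of~\eqref{Ey} reflects that $G^y$ has order $M$; the constraint $l_i \le \min\{l-1, l+L-M\}$ excludes precisely the Fa\`a di Bruno terms containing $K^y_l$ (or $K^z_l$ when $L = M$) that have already been separated out; and the lower bound $k \ge M - L + 2$ in the second sum, combined with the induction hypothesis, yields the vanishing $K^y_l = 0$ for $1 \le l \le M - L + 1$. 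Once these combinatorial identifications are established, the induction closes and produces the unique formal solution claimed.
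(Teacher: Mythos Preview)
Your proposal is correct and follows essentially the same approach as the paper's own proof. The paper organizes the induction via the truncated error $E^l = F\circ K_{\le l} - K_{\le l}\circ R_{\le l+N-1}$ and an explicit Taylor-type decomposition of $E^l$ in terms of $E^{l-1}$, but this is just a more systematic bookkeeping device for the same coefficient comparison you describe; the identification of the operators $\A_l$, the role of the order shifts $l\to l+N-1$ and $l\to l+L-1$, and the handling of the free coefficient $K_N^x=c$ versus the forced $R_{2N-1}=b$ are all the same.
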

\begin{proof}
First we prove by induction that there exist a formal series
$K=\sum _{n\ge 1}K_n t^n$, $K_n =(K^x_n, K^y_n, K^z_n)^\top \in \R^{1+d+d'}$ and a polynomial
$R(t) = \sum _{n\ge 1}^{n_0} R_n t^n$, $R_n  \in \R$, with as much as possible coefficients equal to 0, such that the error
$$
E^l = F \circ K_{\le l} - K_{\le l} \circ R_{\le l+N-1}
$$
%($R$ to be determined, but independent of $l$)
satisfies
\begin{equation}
\label{eq:orderEl} E^{l}(t) =
\begin{cases}
\big (\O(t^{l+N}) ,\O(t^{M+1}),\O(t^{l+1})\big )^{\top}, & \quad \text{if}
\;\;\; l < M-L+1, \\
\big (\O(t^{l+N}) ,\O(t^{l+L}),\O(t^{l+1})\big )^{\top}, & \quad \text{if} \;\;\;
l \ge M-L+1. \end{cases}
\end{equation}
To deal simultaneously with both cases we introduce $P(l)$ as
\begin{equation*}
%\label{def:P}
P(l) = \begin{cases} M+1, &\quad \text{if} \;\;\; 1\le l < M-L+1, \\
l+L,& \quad l \ge M-L+1.
\end{cases}
\end{equation*}
Note that $P(l-1) + 1 \ge P(l) $ and that $P(l) = \max \{M+1,l+L\}$.

We can write $E^l (t)= \sum_{n\ge 1} E^l_n t^n$, with $E^l_n \in \mathbb{R} \times \mathbb{R}^d \times \mathbb{R}^{d'} $.
We denote by $E^{l,x}_{l+N-1}, E^{l,y}_{P(l)}$ and $E^{l,z}_l$ the first non-zero terms of
$(E^{l,x}, E^{l,y}, E^{l,z})$ respectively.
From the proof it will become clear that $E^{m,x}_{l+N-1}, E^{m,y}_{P(l)}$ and $E^{m,z}_l$
actually do not depend on $m$ provided  $m\ge l-1$.
We will simply denote them by $E^{x}_{l+N-1}, E^{y}_{P(l)}$ and $E^{z}_l$ respectively.
These values are the ones which
appear in the statement.

Taking $R(t) = t- a t^N + \O(t^{N+1}) $ and $K_1 = (1,0,0)^{\top}$
the claim holds true for $l=1$ because
\[
E^1 (t)= F \circ K_{\le 1}(t) - K_{\le 1} \circ R_{\le N} (t)= (\O(t^{N+1}),
\O(t^{M+1}), \O(t^2)).
\]

Now, let $l\ge 2$ and assume that there exist polynomials $K_{\le l-1}$ of degree at most $l-1$
and $R_{\le l+N-2} $ of degree at most $l+N-2$ such that
\[
E^{l-1}(t) =
\big (\O(t^{l+N-1}) ,\O(t^{P(l-1)}),\O(t^{l})\big )^{\top}.
\]
We remark that the value of the constant  $b=R_{2N-1}$ will be determined at the step
$l=N$.

In addition, we assume that $K_j^y= 0$ $1 \le j \le l-1 \le M-L+1$.

We look for $K_l \in \R \times \R^{d} \times \R^{d'}$ and $R_{l+N-1}\in \R$
such that $K_{\le l}(t) = K_{\le l-1}(t)
+K_l t^{l}$  and $R_{\le l+N-1}(t) = R_{\le l+N-2}(t) + R_{l+N-1} t^{ l+N-1}$ satisfy~\eqref{eq:orderEl}. Defining  $\Delta_l(t) =
K_l t^{l}$, we have that
%\begin{equation}
%
\begin{align}
E^l=F \circ & K_{\le l} -  K_{\le l} \circ R_{\le l+N-1}\nonumber \\
= &  E^{l-1} \nonumber \\
& + F \circ K_{\le l} - F \circ K_{\le l-1} - (DF\circ K_{\le l-1}) \Delta_l \label{eq:steplofinduction1} \\
& + (DF\circ K_{\le l-1}) \Delta_l \label{eq:steplofinduction2}\\
& - K_{\le l} \circ R_{\le l+N-1} - K_{\le l} \circ R_{\le l+N-2} \label{eq:steplofinduction3}\\
& - \Delta_l \circ R_{\le l+N-2}. \label{eq:steplofinduction4}
\end{align}
%\end{equation}
By the induction hypothesis,
\begin{equation*}
%\label{errortermstepl}
E^{l-1}(t) = (E_{l+N-1}^x t^{l+N-1}, E_{P(l-1)}^y
t^{P(l-1)},E_{l}^z t^{l})^{\top} +\big (\O(t^{l+N})
,\O(t^{P(l)}),\O(t^{l+1})\big )^{\top}.
\end{equation*}

Now we identify the lowest order terms
in~\eqref{eq:steplofinduction1}, \eqref{eq:steplofinduction2},
\eqref{eq:steplofinduction3} and~\eqref{eq:steplofinduction4}.

Using that  $l\ge 2$ we easily estimate \eqref{eq:steplofinduction1}
\begin{equation*}
\begin{aligned}
(F \circ K_{\le l} - F \circ K_{\le l-1} - (DF\circ K_{\le l-1})
\Delta_l)(t) & = \big (\O(t^{2l+N-2}) ,\O(t^{2l+M-2}),\O(t^{2l})\big
)^{\top} \\
& = \big (\O(t^{l+N}) ,\O(t^{P(l-1) + 1}),\O(t^{l+1})\big )^{\top}
\end{aligned}
\end{equation*}
since $2l+M-2\ge M+1+2l-3 > M+1$ and $2l+M-2 \ge l+L +l-2 \ge l+L$.

Concerning \eqref{eq:steplofinduction2}, taking into account that $K_1 = (1,0,0)^{\top}$,
$$
((DF\circ K_{\le l-1}) \Delta_l)(t) = \begin{pmatrix} (1-Nat^{N-1})t^{l}K_l^x
+ v^{\top} K_l^y t^{l+N-1}+ w^{\top} K_l^z t^{l+N-1} + \O(t^{l+N})\\
K^y_l t^l+B_1K_l^y t^{l+M-1} + B_2 K_l^z t^{l+M-1}+ \O(t^{l+M}) \\
C K_l^z t^l + \O(t^{l+1})
\end{pmatrix}.
$$
As for \eqref{eq:steplofinduction3}, taking into account that $K_j^y = 0$ if $1 \le j \le l-1 \le M-L+1$, which implies that
\[
K_{\le l} (t) =
\begin{cases}
K_l^y t^l, & l-1 \le M-L+1, \\
K_{M-L+1} t^{M-L+1} + O(t^{M-L+2}), & \text{otherwise},
\end{cases}
\]
we have that
\begin{equation*}
(K_{\le l} \circ R_{\le l+N-1} - K_{\le l} \circ R_{\le l+N-2})(t) =
\begin{pmatrix}
R_{l+N-1} t^{l+N-1} \\ 0 \\0
\end{pmatrix}
+
\begin{pmatrix}
\O(t^{l+N}) \\ \Delta_l^y(t) \\ \O(t^{l+N})
\end{pmatrix},
\end{equation*}
where
\[
\Delta_l^y(t) =
\begin{cases}
l K_l^y O(t^{2l+N-2}), & l-1 \le M-L+1, \\
O(t^{l+M+N-L}), & \text{otherwise},
\end{cases}
\]
In all cases $\O(t^{l+M+N-L}) = \O(t^{P(l-1)+1})$.

Finally we evaluate \eqref{eq:steplofinduction4}
\begin{equation*}
\Delta_l \circ R_{\le l+N-2} (t) =  K_l(t-at^N +\O(t^{N+1}))^l  = K_l
t^l - l a K_l t^{l+N-1}+ \O(t^{l+N})K_l.
\end{equation*}

From the above calculations, since $l\ge 2$, we have
\begin{align*}
E^l(t) =&
\begin{pmatrix} (E_{l+N-1}^x+a(l-N)K_l^x
- R_{l+N-1}+v^{\top} K_l^y + w^{\top} K_l^z) t^{l+N-1} \\
E_{P(l-1)}^y t^{P(l-1)}+al K_l^y t^{l+N-1}+ B_1 K_l^y t^{l+M-1} + B_2 K_l^z t^{l+M-1} \\
(E_l^z + (C-\Id) K_l^z) t^l
\end{pmatrix}
\\
&+\begin{pmatrix}  \O(t^{l+N})\\
 \O(t^{P(l-1)+1})+\O(t^{l+N})K_l^y \\
 \O(t^{l+1})
\end{pmatrix}.
\end{align*}
This expression permits to choose $(K_l^x,K_l^y,K_l^z)$ and $R_{l+N-1}$ in order to $E^l$ has the claimed order. We start dealing with the third component. We have to take
\[
K_l^z  = -(C-\Id)^{-1} E_l^z.
\]
When dealing with the second one we have to distinguish two cases.
If $P(l-1)=M+1$, which means that $l< M-L+1$ we have that $l+L-1< M$.
Then if $L=N<M$, $K_l^y=0$. Otherwise, if $L=M\le N$, $l\le 1$ and this case is void. Now suppose
$P(l-1)=l+L-1$.
If $M\le N$, we take
\[
K_l^y = \A_l (E_{l+L-1}^y+B_2 K_l^z),
\]
while, if $N<M$,
\[
K_l^y = \A_l E_{l+L-1}^y.
\]
Finally, considering the $x$
component, if $l\neq N$, we take
\[
R_{l+N-1} = 0, \qquad K_l^x = -\frac{1}{a(l-N)}\big ( E_{l+N-1}^x
+v^{\top} K_l^y + w^{\top} K_l^z\big),
\]
otherwise,
\[
R_{2N-1} = E_{2N-1}^x+v^{\top} K_N^y + w^{\top} K_N^z, \qquad  K_N^x \;\;\; \text{ is free }.
\]
We write $c=K_N^x$ which can be chosen arbitrarily. We recall that
$R_{2N-1}$ corresponds to the coefficient $b$.

Now we come to compute $E^{x}_{l+N-1}$, $E^y_{l+L-1}$  and
$E^z_{l+L-1}$.

By definition, $E^z_{l}$ is the term of order $l$ of $\pi^z E^{l-1}
= F^z \circ K_{\le l-1} - K^z_{\le  l-1} \circ R_{\le  l+N-2}$, that is,
\[
E^z_{l} = \frac{1}{l!} D^l \pi^z E^{l-1}(0).
\]
By the Fa\`a di Bruno formula  \eqref{Faa_power},
\begin{equation}
\label{eq:Ezl} E^z_{l} = \sum_{k=1}^{l} \sum_{\begin{array}{c}
\scriptstyle{l_1+\cdots+l_k=l} \\ \scriptstyle{1\leq l_i \leq l-1}
\end{array}} F_k^z[K_{l_1},\dots,K_{l_k}] - \sum_{k=1}^{l-1} K_k^z \sum_{\begin{array}{c}
\scriptstyle{l_1+\cdots+l_k=l}\\ \scriptstyle{1\leq l_i \leq l+N-2}
\end{array}} \prod_{i=1}^k R_{l_i}.
\end{equation}

In the first term of \eqref{eq:Ezl} the addend with $k=1$ vanishes because $K^z_1=0$.
Moreover, for $k\ge 2$, $F^z_k = G^z_k$. In the second term, the addend with $k=1$ also
vanishes. Moreover, if $k>l-N+1$, since
$$
\sum_{\begin{array}{c} \scriptstyle{l_1+\cdots+l_k=l}\\
\scriptstyle{1\leq l_i \leq l+N-2}
\end{array}} \prod_{i=1}^k R_{l_i}
$$
is the coefficient of $t^l$ in
\[
R(t)^k = (t-a t^N + b t^{2N-1})^k = t^k - a k t^{k+N-1} + \O(t^{k+2N-2}),
\]
the addend with this $k$ vanishes because then $l<k+N-1$ and the next non-zero
term after order $k$ is of order $k+N-1$.
This proves formula~\eqref{Ez}.

Analogously,
$$
E_x^{l+N-1} = \frac{1}{(l+N-1)!} \Dif^{l+N-1} \pi^x E^{l-1}(0),\qquad
E_y^{l+L-1} = \frac{1}{(l+L-1)!} \Dif^{l+L-1} \pi^y E^{l-1}(0).
$$
Applying again Fa\`a di Bruno's formula  we obtain
\begin{equation}
\label{ElL1y} E_{l+L-1}^y = \sum_{k=1}^{l+L-1}
\sum_{\begin{array}{c} \scriptstyle{l_1+\cdots+l_k=l+L-1} \\
\scriptstyle{1\leq l_i \leq l-1}
\end{array}} F^y_k[K_{l_1},\cdots,K_{l_k}]
-
\sum_{k=1}^{l-1}K_k^y
\sum_{\begin{array}{c}
\scriptstyle{l_1+\cdots+l_k=l+L-1} \\ \scriptstyle{1\leq l_i \leq l+N-2}
\end{array}} \prod_{i=1}^k R_{l_i}
\end{equation}
and
\begin{equation*}
%\label{ElN1x}
E_{l+N-1}^x = \sum_{k=1}^{l+N-1}
\sum_{\begin{array}{c} \scriptstyle{l_1+\cdots+l_k=l+N-1} \\
\scriptstyle{1\leq l_i \leq l-1}
\end{array}} F^x_k[K_{l_1},\cdots, K_{l_k}] - \sum_{k=1}^{l-1} K^x_k
\sum_{\begin{array}{c} \scriptstyle{l_1+\cdots+l_k=l+N-1} \\
\scriptstyle{1\leq l_i \leq l+N-2} \end{array}}
\prod_{i=1}^k R_{l_i}.
\end{equation*}
We begin by determining the indices in \eqref{ElL1y} that provide non-zero terms in $E_{l+L-1}^y$.
The term with $k=1$ in the first addend \eqref{ElL1y} vanishes because
it would be $F^y_1 K^y_{l+L-1}$, but for $E^l$ we are working with $K_{\le l-1}$.
Moreover, since $F_k^y=0$ if $2\leq k\leq M-1$, the sum must start with $k=M$. Also,
$
M\leq k \le l_1+\cdots+l_k =l+L-1
$
implies $l\geq M-L+1$.
In addition, when $l=M-L+1$, we always  have that
$$
l_1+\cdots+l_k=M-L+1+L-1=M.
$$
Therefore, if $l=M-L+1$, $k=M$ and $l_1=\cdots=l_M=1$. Since $K_1=(1,0,0)^{\top}$, the corresponding term is
$$
F_M^y [\overbrace{K_1,\cdots,K_1}^{M)}]= \frac{1}{M!}\partial_x^M F^y (0,0)=0.
$$
Then if $l\leq M-L+1$ the first term is void. To finish with the
first term, we note that for all $i$, using again that $k\geq M$,
$$
M-1+l_i \leq l_1+\cdots+l_k =l+L-1\quad  \Rightarrow \quad l_i \leq l+L-M,
$$
that is, the first term of~\eqref{ElL1y} has the form claimed in
formula~\eqref{Ey}. With respect to the second term
of~\eqref{ElL1y}, we only need to note that $K^y_k=0$ for $1\le k\le M-L+1$, and analogously as before, that
$$
\sum_{\begin{array}{c} \scriptstyle{l_1+\cdots+l_k=l+N-1} \\
\scriptstyle{1\leq l_i \leq l+N-2} \end{array}} \prod_{i=1}^k
R_{l_i}
$$
is the coefficient of $t^{l+N-1}$ of $R(t)^k$. Therefore since if vanishes for
$k<l+L-1<k+N-1$, we have that $l+L-1\geq
k+N-1$ which implies that  $k\leq l+L-N$. This ends the proof of formula~\eqref{Ey}
for $E_l^y$. To check formula~\eqref{Ex} for $E_{l+N-1}^x$ we use the form
of $F^x(x,y,z)=x-ax^N + G^x(x,y,z)$ and the proof follows the same lines
as the one for $E_{l+L-1}^y$.
\end{proof}

\section{Gevrey estimates}\label{sec:Gevrey}
Before starting to obtain the Gevrey estimates of the formal
solution $K$ we perform two change of coordinates. The first one is
a close to the identity change that uses the $(N-1)$-degree
approximation of the formal parabolic curve obtained in Proposition
\ref{construction} to put it closer to the $x$-axis. In the new
variables the parameterization will be the embedding to the $x$-axis
plus terms of order at least $N$.

The structure of this section is quite similar to the counterpart in~\cite{BH08}, however,
there are some differences to take into account.

\begin{lemma}\label{lemmaN}
We define the change of variables
\begin{equation*}
(x,y,z)=\Phi(\bar{x},\bar{y},\bar{z}):=K_{\leq N-1}(\bar{x})+(0,\bar{y}, \bar{z}),
\end{equation*}
where $K_{\leq N-1}(\bar{x})=\sum_{j=1}^{N-1}K_{j}\bar{x}^j$. In these new
variables:
\begin{enumerate}
\item $\bar{F}=\Phi^{-1} \circ F \circ \Phi$ has the same form~\eqref{system}
of~$F$ with the same constant $a$, vectors $v^{\top}, w^{\top}$ and
the same matrices $B_1$ and $C$.
\item The formal solution $\bar{K}$ and $\bar R$ of
$\bar{F}\circ \bar{K} -\bar{K}\circ \bar R=0$ obtained applying
Proposition~\ref{construction} to $\bar{F}$ satisfies
\begin{equation*}
\bar{K}(t)=(t,0,0)^{\top}+  O(t^{N})\qquad \text{and}\qquad \bar
R(t)=t-at^N + b t^{2N-1} = R(t).
\end{equation*}
\item The Gevrey character is not affected by this change, i.e., if one of $K$ or $\bar{K}$ is Gevrey of some order
the other is also Gevrey of the same order.
\end{enumerate}
\end{lemma}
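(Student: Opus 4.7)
The plan is to prove the three items in order, leveraging that $\Phi$ is a polynomial diffeomorphism tangent to the identity at $0$ (because $K_1=(1,0,0)^\top$) whose restriction to $\{\bar y=\bar z=0\}$ is the truncated curve $\bar x\mapsto K_{\leq N-1}(\bar x)$. Conjugation by $\Phi$ therefore absorbs the first $N-1$ Taylor coefficients of the formal parabolic curve into the coordinate change, without disturbing the low-order structural coefficients $a,v,w,B_1,C$ of~$F$.

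For~(1), $D\Phi(0)=\Id$ gives $D\bar F(0)=DF(0)$, reproducing the block structure of the linear part and in particular the matrix~$C$. For the nonlinear structure, I would evaluate at $\bar y=\bar z=0$, write $\Phi(\bar x,0,0)=K_{\leq N-1}(\bar x)$, and use a Taylor expansion around $K(\bar x)$ together with the invariance $F\circ K=K\circ R$ and $DF(0)=\mathrm{diag}(1,\Id_d,C)$ to obtain
\begin{equation*}
F(K_{\leq N-1}(\bar x))=K(R(\bar x))-\mathrm{diag}(1,\Id_d,C)\,K_N\,\bar x^N+\O(\bar x^{N+1}).
\end{equation*}
Composing with $\Phi^{-1}$, whose row entries at the image point are rational expressions in $(K^{x,y,z}_{\leq N-1})'(\bar x)$, telescopes to $\bar F(\bar x,0,0)=(R(\bar x),0,0)^\top+\O(\bar x^{N+1})$ in the $x$-slot, with the correct orders of vanishing in the $y$- and $z$-slots. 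Next I would differentiate $\bar F=\Phi^{-1}\circ F\circ \Phi$ in $\bar y,\bar z$, using $\partial_{\bar y}\Phi=(0,\Id_d,0)^\top$ and $\partial_{\bar z}\Phi=(0,0,\Id_{d'})^\top$: the leading contributions come from the diagonal blocks of $D\Phi^{-1}$ and recover $v,w,B_1$ unchanged, while the off-diagonal corrections, proportional to $(K^y_{\leq N-1})'(\bar x)\cdot\partial_{y,z}F^x(K_{\leq N-1}(\bar x))$, are of order $\bar x\cdot\bar x^{N-1}=\bar x^N$ when $M\leq N$ and of order $\bar x^{M-N+1}\cdot\bar x^{N-1}=\bar x^M$ when $M>N$ thanks to the vanishing $K^y_l=0$ for $l\leq M-L+1$ stated in Proposition~\ref{construction}; in either case they fit into the allowed $\O(\bar x^M)$ error.

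For~(2) I would define $\bar K:=\Phi^{-1}\circ K$, so that
\begin{equation*}
\bar F\circ\bar K=\Phi^{-1}\circ F\circ K=\Phi^{-1}\circ K\circ R=\bar K\circ R,
\end{equation*}
i.e.\ $\bar K$ satisfies the invariance equation for $\bar F$ with the \emph{same} polynomial~$R$. Inverting $K^x_{\leq N-1}\circ\bar K^x=K^x$ gives $\bar K^x(t)=t+\O(t^N)$ because the degree-$2,\ldots,N-1$ coefficients of $K^x$ coincide with those of $K^x_{\leq N-1}$; then $\bar K^{y,z}(t)=K^{y,z}(t)-K^{y,z}_{\leq N-1}(\bar K^x(t))=\O(t^N)$. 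Choosing the free constant $c=\bar K^x_N$ in Proposition~\ref{construction} applied to $\bar F$, uniqueness identifies this $\bar K$ with the proposition's formal solution and forces $\bar R=R$.

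For~(3), both $\Phi$ and $\Phi^{-1}$ are germs of real analytic diffeomorphisms at~$0$, so the statement reduces to the standard fact that composition with an analytic local diffeomorphism preserves the Gevrey-$\gamma$ class: using~\eqref{Faa_power} together with exponential bounds $|h_k|\leq Cr^{-k}$ valid for any analytic $h$ with positive radius of convergence~$r$, one checks directly that Gevrey-$\gamma$ bounds on the coefficients of $K$ transfer to those of $\bar K=\Phi^{-1}\circ K$, and vice versa. The main obstacle of the whole proof will be the bookkeeping in step~(1), specifically the verification that the exact $B_1$ and exact $v,w$ (rather than perturbations thereof) appear in $\bar F$; this rests on the interplay between the indices $N,M,L$ and the vanishing $K^y_l=0$ for $l\leq M-L+1$ recorded in Proposition~\ref{construction}.
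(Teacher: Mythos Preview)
Your proposal is correct and aligns with the paper's approach: the paper omits the proof entirely, stating only that it ``depends on cumbersome but straightforward computations and uses, among other properties, that $K_1^y=\cdots=K_{M-L+1}^y=0$.'' You have correctly identified this vanishing as the crucial ingredient for item~(1) and supplied a clean argument for items~(2) and~(3), so your outline fills in precisely what the paper leaves to the reader.
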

The proof of this lemma depends on cumbersome but straightforward computations and uses, among other properties, that
$K_1^y=\cdots=K_{M-L+1}^y=0$.

Next we perform a rescaling of parameter $\lambda$ to achieve a good
control on the growth of the terms $K_l$ of the formal solution up
to some suitable order $l_0$ so that  we can start an induction
procedure to estimate the terms $K_j$ from $l_0$ on and obtain a
significantly simpler bound from them.

Let $\U \subset \C^{1+d+d'}$ be the domain of a complex extension of $\bar F$.
Let $ B(\delta)$ be a ball of radius $\delta >0$ such that
$\overline{ B(\delta)} \subset \U$.

Let $\bar G = \bar F - \L$ with $\L$ defined in~\eqref{defL}. Given $\esc\geq 1$ we introduce
\begin{equation}\label{deftildes}
\begin{aligned}
\tilde{F}(x,y,z) &= \esc \bar{F} (\esc^{-1}x ,\esc^{-1} y,\esc^{-1}z),  &\qquad  \tilde{G}(x,y,z) &= \esc \bar{G} (\esc^{-1}x ,\esc^{-1} y,\esc^{-1}z), \\
\tilde{K}(t) &= \esc \bar{K} (\esc^{-1}t),&\qquad \tilde{R}(t) &= \esc \bar{R}(\esc^{-1} t),
\end{aligned}
\end{equation}
and for the sake of simplicity, we omit the dependence of $\esc$ on the notation.
\begin{lemma}\label{scaling}
Let $\bar\delta$ be such that $\overline B(\bar\delta)$ is contained
in the complex domain $\mathcal U$ of $\bar F$.
Let $\bar G= \bar F - \L$ and
$\bar \CC= \max_{(x,y,z)\in \overline B(\bar\delta)} \norm{\bar{G}(x,y,z)}$.

For all  $\delta_0, \e, \mu_0>0$ and
$l_0\in \N$, there exists $\esc:=\esc(\delta_0, \e, \mu_0, l_0, \bar\delta)\geq 1$
such that the functions $\tilde{F}$, $\tilde{G}$, $\tilde{K}$ and $\tilde{R}$ defined above in~\eqref{deftildes},
satisfy the following properties:
\begin{enumerate}
\item $\tilde{F}$ has the form (\ref{system}), where the corresponding
values of $a,v,B_1,B_2$ and $C$ (which we denote with the same letter with tilde) are
$$
\aa= \lambda^{-(N-1)}a,\qquad \tilde{v}=\lambda^{-(N-1)}v, \qquad \tilde{w}=\lambda^{-(N-1)}w, \qquad \BB_i=\lambda^{-(M-1)} B_i,
$$
with $i=1,2$ and $\tilde{C}=C$, respectively. The domain of $\tilde{F}$
contains a ball $\overline B(\tilde\delta)$, with $\tilde\delta=
\esc \bar\delta > \delta_0$.
\item $\tilde{R}(t)=t-\tilde{a}t^{N}+\tilde{b}t^{2N-1}$ with $\tilde{b}=\lambda^{-2N+2} b$.
We further ask  $|\lambda^{-1/\gamma} a|\leq \e$
where $\gamma$ is defined in~\eqref{defalpha}. As a consequence $\aa \leq \e$.
\item Formally we have that
\begin{equation*}
%\label{inv_cond_esc}
\tilde{F}\circ \tilde{K} - \tilde{K} \circ \tilde{R} = 0.
\end{equation*}
\item Let $\displaystyle \tilde{\CC}=\max_{(x,y,z)\in \overline{B}(\tilde{\delta})}
\norm{\tilde{G}(x,y,z)}$. It is clear that $\tilde{\CC}= \esc \bar{\CC}$ and hence
$\norm{\tilde{G}_k} \leq \esc \bar{\CC} \tilde{\delta}^{-k} $ for all
$k\geq 0$.
\item $\norm{\tilde{K}_l} \leq \mu_0 l!^{\gamma}$
for all $N\leq l \leq l_0$. We recall that $\tilde{K}_l=0$ if $2\leq
l\leq N-1$.
\end{enumerate}
\end{lemma}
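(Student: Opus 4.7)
The plan is to verify the five properties by direct computation of how each object transforms under the rescaling, and then to choose $\lambda$ large enough to satisfy all the requirements simultaneously. Since there are only finitely many estimates to enforce, this reduces to a finite optimization on~$\lambda$.

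For part (1), I would substitute the Taylor expansion of $\bar F$ (which by Lemma~\ref{lemmaN} has the form~\eqref{system} with the same $a$, $v$, $w$, $B_1$, $B_2$, $C$) into the definition $\tilde F(x,y,z)=\lambda\bar F(\lambda^{-1}x,\lambda^{-1}y,\lambda^{-1}z)$ and track homogeneities. A degree-$N$ term in $\bar F^x$ contributes $\lambda\cdot\lambda^{-N}$, yielding the factor $\lambda^{-(N-1)}$ in front of $a$, $v$ and $w$; a degree-$M$ term in $\bar F^y$ produces $\lambda^{-(M-1)}$ in front of $B_1,B_2$; and the linear terms $y$ and $Cz$ are unchanged since $\lambda\cdot\lambda^{-1}=1$. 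The domain of $\tilde F$ clearly contains $\overline{B}(\lambda\bar\delta)$, so requiring $\lambda\bar\delta>\delta_0$ gives the first constraint on $\lambda$. Part~(2) follows from $\tilde R(t)=\lambda\bar R(\lambda^{-1}t)=t-\lambda^{-(N-1)}a\,t^N+\lambda^{-(2N-2)}b\,t^{2N-1}$; imposing $|\lambda^{-1/\gamma}a|\le\varepsilon$ gives the second constraint, and since $\lambda\ge1$ and $N-1\ge1/\gamma$ (both when $N\le M$ with equality, and when $N>M$ since $N-1\ge N-M$), this automatically implies $|\tilde a|\le\varepsilon$.

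Part (3) is a formal identity: writing the composition $\tilde F\circ\tilde K$ at a formal parameter $t$ gives $\lambda\bar F(\bar K(\lambda^{-1}t))$, and similarly $\tilde K\circ\tilde R(t)=\lambda\bar K(\bar R(\lambda^{-1}t))$; the formal invariance $\bar F\circ\bar K=\bar K\circ\bar R$ supplied by Proposition~\ref{construction} (and preserved by Lemma~\ref{lemmaN}) yields the equality. Part~(4) is a standard Cauchy estimate: since $\tilde G$ is analytic on $\overline{B}(\tilde\delta)$ with $\max\|\tilde G\|=\lambda\bar\kappa=\tilde\kappa$, Cauchy's inequalities give $\|\tilde G_k\|\le\tilde\kappa\,\tilde\delta^{-k}$.

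For part (5), the key observation is that from $\tilde K(t)=\lambda\bar K(\lambda^{-1}t)$ we obtain $\tilde K_l=\lambda^{1-l}\bar K_l$. For $N\le l\le l_0$ we want $\lambda^{1-l}\|\bar K_l\|\le\mu_0\,l!^{\gamma}$, i.e.\
\[
\lambda^{l-1}\ge\frac{\|\bar K_l\|}{\mu_0\,l!^{\gamma}}.
\]
Since $l-1\ge N-1\ge1$ and $l$ ranges over a finite set, this is achievable by taking $\lambda$ sufficiently large. Collecting all the lower bounds on $\lambda$ produced by the constraints in parts (1), (2) and (5) (each of which is a finite condition), any $\lambda$ exceeding the maximum works, which gives the required $\lambda(\delta_0,\varepsilon,\mu_0,l_0,\bar\delta)\ge1$. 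The only mild subtlety is checking that $\lambda^{-1/\gamma}$ indeed controls $\lambda^{-(N-1)}$ in the case $N>M$, which is why the constant $\gamma$ from~\eqref{defalpha} (and not $\alpha$) appears in condition~(2); apart from this, the lemma is purely bookkeeping.
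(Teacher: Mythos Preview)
Your proposal is correct. The paper in fact omits the proof of this lemma entirely, treating it as an elementary verification; your argument is exactly the straightforward bookkeeping one would carry out, and each step (the homogeneity count for part~(1), the formal conjugation identity for part~(3), the Cauchy estimate for part~(4), the finite optimization on $\lambda$ for part~(5), and the inequality $N-1\ge 1/\gamma$ justifying the last claim of part~(2)) is sound.
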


We remark that $\sigma := \frac{\tilde{b}}{\tilde{a}^2}$ does not depend on the rescaling parameter $\lambda$.

\begin{lemma}\label{lemmaAl}
The matrices $\tilde{\A}_l$ defined as in~\eqref{defAl} with $\tilde{B_1}$ instead of $B_1$, after the changes of variables
in Lemmas~\ref{lemmaN} and~\ref{scaling}, satisfy
$\tilde{\A}_l = \lambda^{N-1}\A_l$ if $M\geq N$ and $\tilde{\A}_l =
\lambda^{M-1} \A_l$ when $M<N$. Moreover, if $l\geq l_0 \geq 2 |a|^{-1} \Vert B_1\Vert$,
$$
\| \tilde{\A}_l \| \leq 2 \lambda^{N-1} (l|a|)^{-1},\qquad \text{if}\qquad M\geq N.
$$
\end{lemma}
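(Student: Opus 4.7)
The plan is simply to substitute the rescaled quantities $\tilde a = \esc^{-(N-1)} a$ and $\tilde B_1 = \esc^{-(M-1)} B_1$ coming from Lemma~\ref{scaling} into the three branches of the definition~\eqref{defAl} applied to $\tilde B_1$ and $\tilde a$, and to read off the power of $\esc$ that can be factored out in each case. The identity $\tilde{\A}_l = \esc^{N-1}\A_l$ (resp.\ $\esc^{M-1}\A_l$) will then be immediate, and the norm estimate for $M\ge N$ will follow from an elementary Neumann-series argument.

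For $M=N$, I would write
\[
\tilde B_1 + \tilde a\, l\Id = \esc^{-(N-1)} B_1 + \esc^{-(N-1)} al\Id = \esc^{-(N-1)}\bigl(B_1 + al\Id\bigr),
\]
so that $\tilde{\A}_l = -(\tilde B_1 + \tilde a l\Id)^{-1} = \esc^{N-1}\A_l$. For $M<N$, directly $\tilde{\A}_l = -\tilde B_1^{-1} = -\esc^{M-1}B_1^{-1} = \esc^{M-1}\A_l$. For $M>N$, $\tilde{\A}_l = -(l\tilde a)^{-1}\Id = -\esc^{N-1}(la)^{-1}\Id = \esc^{N-1}\A_l$. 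In all three cases the scaling is determined purely algebraically by how $\tilde a$ and $\tilde B_1$ scale under $\esc$.

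For the norm bound in the case $M\ge N$, only the $M=N$ subcase requires any work, since for $M>N$ we have simply $\A_l = -(la)^{-1}\Id$ and hence $\|\tilde{\A}_l\| = \esc^{N-1}(l|a|)^{-1}$, which already meets the claimed bound. For $M=N$ I would factor $B_1+al\Id = al(\Id + (al)^{-1}B_1)$; the hypothesis $l \ge l_0 \ge 2|a|^{-1}\|B_1\|$ gives $\|(al)^{-1}B_1\| \le 1/2$, so the Neumann series yields $\|(\Id+(al)^{-1}B_1)^{-1}\| \le 2$, and therefore $\|(B_1+al\Id)^{-1}\| \le 2(l|a|)^{-1}$. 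Multiplying by $\esc^{N-1}$ gives the stated estimate $\|\tilde{\A}_l\| \le 2\esc^{N-1}(l|a|)^{-1}$.

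There is no genuine obstacle here: the statement is a direct book-keeping consequence of how the constants $a$ and $B_1$ transform under the rescaling~\eqref{deftildes}, combined with a one-line Neumann-series bound. The only point that requires a small amount of care is checking that the threshold $l_0 \ge 2|a|^{-1}\|B_1\|$ is exactly what makes $(al)^{-1}B_1$ a contraction, so that the inversion of $B_1 + al\Id$ is controlled uniformly in $l$.
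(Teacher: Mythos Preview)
Your argument is correct and complete. The paper in fact omits the proof of this lemma altogether, treating it as a routine consequence of the scaling relations in Lemma~\ref{scaling}; your case-by-case substitution and the Neumann-series estimate for $M=N$ are exactly the expected verification.
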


The following technical lemmas are slight variations of lemmas
in~\cite{BH08}. For the reader's convenience, we state and prove
them.
\begin{lemma}\label{technicallemmas} Let $k,\nu\in \N$, $\nu\geq k$ and $\beta \ge \frac{1}{N-1}$.
Let  also
\begin{equation*}
%\label{defRknubis}
J^1_{k,\nu} = k!^\beta R_{k,\nu}, \qquad \text{where} \qquad R_{k,\nu} = \sum_{\begin{array}{c} \scriptstyle{l_1+\cdots+l_k= \nu} \\
\scriptstyle{l_i \geq 1} \end{array}} \prod_{i=1}^k \RR_{l_i}
\end{equation*}
and $\tilde R_l$ are the coefficients of the polynomial $\tilde R(t) = t -\tilde a t^N + \tilde b t^{2N-1}$.
Let $\sigma= \frac{\tilde b}{\tilde a^2}$ and $m=\frac{\nu-k}{N-1}$.
We have
\begin{equation*}
\begin{cases}
J_{k,k}^1= k!^\beta \\
|J_{k,\nu}^1|  \leq (\nu-N+1)!^{\beta} (\nu-mN+1) |\aa|^m
(1+|\sigma|)^{m/2},& \qquad \text{if}\quad \frac{\nu-k}{N-1}\in \N,\\
J_{k,\nu}^1  = 0, & \qquad \text{ otherwise.}
\end{cases}
\end{equation*}
\end{lemma}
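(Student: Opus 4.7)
My plan is to identify $R_{k,\nu}$ as a coefficient in a specific polynomial and bound it by combining a trinomial expansion with the scalar relation $\tilde b=\sigma\tilde a^2$.

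First I would factor $\tilde R(t)=t(1-\tilde a\,t^{N-1}+\tilde b\,t^{2N-2})$, so that
\[
\tilde R(t)^k=t^k(1-\tilde a\,u+\tilde b\,u^2)^k,\qquad u:=t^{N-1},
\]
and hence $R_{k,\nu}$ equals the coefficient of $u^m$ in $(1-\tilde a\,u+\tilde b\,u^2)^k$, where $\nu=k+m(N-1)$. This immediately forces $R_{k,\nu}=0$ unless $m=(\nu-k)/(N-1)\in\N$; in the trivial case $\nu=k$ (so $m=0$), only the all-ones composition $(1,\dots,1)$ contributes and $\tilde R_1=1$, giving $J^1_{k,k}=k!^\beta$.

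For $m\ge 1$, I would apply the trinomial theorem, extract the monomials with $b+2c=m$ (so $a=k-m+c\ge 0$), and use $\tilde b=\sigma\tilde a^2$ to pull out $\tilde a^m$, obtaining
\[
R_{k,\nu}=\tilde a^m\sum_{c=\max(0,m-k)}^{\lfloor m/2\rfloor}(-1)^{m-2c}\sigma^c\,\frac{k!}{(k-m+c)!\,(m-2c)!\,c!}.
\]
The key observation is that $c\le m/2$, so $|\sigma|^c\le(1+|\sigma|)^c\le(1+|\sigma|)^{m/2}$ uniformly across the sum. Applying the triangle inequality then yields
\[
|R_{k,\nu}|\le|\tilde a|^m(1+|\sigma|)^{m/2}\,S_{k,m},\qquad S_{k,m}:=\sum_c\frac{k!}{(k-m+c)!\,(m-2c)!\,c!},
\]
and the factor $(1+|\sigma|)^{m/2}|\tilde a|^m$ matches the target bound exactly.

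It then remains to show $k!^\beta S_{k,m}\le(\nu-N+1)!^\beta(\nu-mN+1)$. Writing $\nu-N+1=k+(m-1)(N-1)$ and $\nu-mN+1=k-m+1$, and using $\beta\ge 1/(N-1)$ together with $\prod_{i=1}^{(m-1)(N-1)}(k+i)\ge(k+1)^{(m-1)(N-1)}$, this reduces to the purely combinatorial inequality $S_{k,m}\le(k-m+1)(k+1)^{m-1}$ for $k\ge m$, which I would establish by recognising $S_{k,m}=[v^m](1+v+v^2)^k$ and inducting on $m$ via the three-term recursion $S_{k,m}=S_{k-1,m}+S_{k-1,m-1}+S_{k-1,m-2}$ with base cases $S_{k,0}=1$ and $S_{k,1}=k$. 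The hypothesis $\beta\ge 1/(N-1)$ is used precisely to absorb the $N$-dependence into the factorial ratio. I expect the combinatorial inequality---together with the careful bookkeeping needed when comparing the trinomial sum to the product of factorials and the factor $(k-m+1)$---to be the main obstacle, while the rest of the argument is essentially formal manipulation.
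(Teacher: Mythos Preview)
Your approach is correct and shares the paper's opening move: recognising $R_{k,\nu}$ as the coefficient of $u^m$ in $(1-\tilde a u+\tilde b u^2)^k$ via the factorisation $\tilde R(t)=t(1-\tilde a t^{N-1}+\tilde b t^{2N-2})$ and expanding by the trinomial theorem. The divergence is in how the resulting sum is bounded. You pull $(1+|\sigma|)^{m/2}$ out at once via $|\sigma|^c\le(1+|\sigma|)^{m/2}$, leaving the full trinomial sum $S_{k,m}=[v^m](1+v+v^2)^k$ to be bounded by $(k-m+1)(k+1)^{m-1}$. The paper instead keeps $|\sigma|^{m_3}$ together with the factorials $1/\bigl((m-2m_3)!\,m_3!\bigr)$ and uses $(m-2m_3)!\ge([m/2]-m_3)!$ to collapse that part to $(1+|\sigma|)^{[m/2]}/[m/2]!$; the remaining factor is then simply $k!/(k-m)!\le k^{m-1}(k-m+1)$, which is immediate. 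In effect the paper trades a slightly sharper treatment of the $\sigma$-sum for an almost trivial combinatorial step, while you pay for the cruder $\sigma$-estimate with a genuinely nontrivial inequality on $S_{k,m}$. Your inequality is true and provable along the lines you sketch, but note that the recursion $S_{k,m}=S_{k-1,m}+S_{k-1,m-1}+S_{k-1,m-2}$ is an induction on $k$, not on $m$; the boundary case $m=k$ then needs separate handling since $S_{k-1,k}$ falls outside the hypothesis $k-1\ge m$ (it follows from $S_{k,k}\le 3^k\le(k+1)^{k-1}$ for $k\ge4$, plus direct checks for small $k$).
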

\begin{proof}
Note that $R_{k,\nu}$ is the coefficient of $t^\nu$ of the polynomial $(t-\aa t^{N} +
\bb t^{2N-1})^k$. Then, we can rewrite it as
\begin{equation}\label{Rknu1}
R_{k,\nu}=\sum_{\begin{array}{c} \scriptstyle{m_1+m_2+m_3=k} \\
\scriptstyle{m_1 + N m_2 + (2N-1)m_3=\nu}
\end{array}}\frac{k!}{m_1!m_2 ! m_3!} (-\aa)^{m_2} \bb^{m_3}.
\end{equation}
The conditions on the indices $m_2,m_3$ in the previous formula imply
$(N-1) m_2 + 2(N-1) m_3 = \nu-k$, that is,
$m_2 + 2 m_3 = (\nu-k)/(N-1) \in \N$.
Therefore $R_{k,\nu}=0$ if $m:=(\nu-k)/(N-1) \notin \N$. When $\nu=k$, $m_2=m_3=0$ and $m=0$. Then $R_{k,k}=1$ and
$J_{k,k}^1=k!^{\beta}$. If $m\geq 1$, we reduce~\eqref{Rknu1} to a sum with a single index as
\begin{equation*}
R_{k,\nu} = \sum_{m_3 = 0}^{\left
[\frac{m}{2}\right]}\frac{(\nu-(N-1)m)!}{(\nu-mN +m_3)! (m-2m_3)!
m_3!} (-\aa)^{m-2m_3} \bb^{m_3} .
\end{equation*}
Using
\begin{equation*}
\frac{(\nu-(N-1)m)!}{(\nu-Nm +m_3)!}
\leq \frac{(\nu-(N-1)m)!}{(\nu-Nm)!}
\leq (\nu-(N-1)m)^{m-1} (\nu-Nm+1)
\end{equation*}
and
\begin{equation*}
\sum_{m_3 = 0}^{\left [\frac{m}{2}\right ]}
\frac{|\aa|^{m-2m_3}|\bb|^{m_3}}{(m-2m_3)! m_3!}
 \leq |\aa|^m
\sum_{m_3 = 0}^{\left[\frac{m}{2}\right]} \frac{1}{([m/2]-m_3)!
m_3!} {\left|\frac{\bb}{\aa^2}\right|}^{m_3}   \leq
\frac{1}{\left [\frac{m}{2}\right ]!} |\aa|^m (1+|\sigma|)^{m/2},
\end{equation*}
we get
\begin{equation*}
%\label{Rklpre}
|R_{k,\nu}|\leq  \frac{1}{\left [\frac{m}{2}\right ]!}
(\nu-(N-1)m)^{m-1} (\nu-Nm+1) |\aa|^m (1 + |\sigma|)^{m/2}.
\end{equation*}

Finally, since $k= \nu-(N-1)m$, using that $m\geq 1$ and that
\begin{align*}
\frac{(\nu-(N-1)m)!^{\beta}
(\nu-(N-1)m))^{m-1}}{(\nu-N+1)!^{\beta}}&=
\frac{(\nu-(N-1)m)^{m-1}}{[(\nu-N+1)\cdots (\nu-(N-1)m+1)]^{\beta}} \\
&\leq \frac{(\nu -(N-1)m)^{m-1}}{(\nu-(N-1)m+1)^{(m-1)(N-1)\beta}}
\leq 1,
\end{align*}
we obtain
\begin{align*}
|J_{k,\nu}^1| & \leq \frac{1}{\left [\frac{m}{2}\right ]!}
(\nu-(N-1)m)!^\beta (\nu-(N-1)m)^{m-1} (\nu-Nm+1) |\aa|^m (1 +
|\sigma|)^{m/2},
\end{align*}
where we use that $(N-1)\beta\geq 1$.
\end{proof}

The next lemma collects two technical results on bounds of some
products of factorials.
\begin{lemma}
\label{lem:J2knu} Let $N\ge 2$, $\beta \ge \frac{1}{N-1}$ and
$N_\beta=N^{\beta(N-1)}$.
\begin{enumerate}
\item[i)] Let $k\geq 1$, $\nu\geq k N$ and
\begin{equation} \label{defMknu}
M_{k,\nu}=
\sum_{\begin{array}{c}
\scriptstyle{l_1+\cdots+l_k=\nu} \\ \scriptstyle{ l_i\geq N}
\end{array}}
(l_1!\cdot \, \cdots \, \cdot l_k!)^{\beta}.
\end{equation}
If $\nu<kN$, the sum in~\eqref{defMknu} is void and we define $M_{k,\nu}=0$.
We have
\begin{equation*}
\begin{cases}
M_{k,\nu}  \leq (\nu-k+1)!^\beta N_{\beta}^{k-1},  &\qquad \nu \geq k N, \\
M_{k,\nu}= 0, &  \qquad \text{ otherwise}.
\end{cases}
\end{equation*}
\item [ii)] Let $k\geq 1$, $\nu \geq k$ and
\begin{equation}\label{defJknu2lema}
J_{k,\nu}^2= \sum_{\begin{array}{c}
\scriptstyle{l_1+\cdots+l_k=\nu} \\ \scriptstyle{l_i=1\text{ or } l_i\geq N}
\end{array}}(l_1!\cdot \cdots \cdot l_k!)^{\beta}, \qquad \nu\geq k.
\end{equation}
If $\nu<k$ the sum in~\eqref{defJknu2lema} is void and we define $J_{k,\nu}^2=0$.
We have
\begin{equation*}
J_{k,\nu}^2\leq \frac{(N_{\beta}+1)^k -1 }{N_{\beta}} (\nu-k+1)!^\beta ,\qquad \nu \geq k.
\end{equation*}
\end{enumerate}
\end{lemma}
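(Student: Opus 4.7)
The plan is to prove (i) by induction on $k$ and then derive (ii) from (i) via a combinatorial decomposition of the compositions according to how many parts equal~$1$.

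For (i), the base case $k=1$ is immediate since $M_{1,\nu} = \nu!^{\beta}$ saturates the bound. For the inductive step I split off the last index,
\[
M_{k,\nu} \;=\; \sum_{l_k=N}^{\nu-(k-1)N} l_k!^{\beta}\, M_{k-1,\nu-l_k},
\]
apply the induction hypothesis to $M_{k-1,\nu-l_k}$, and set $m := \nu-k+2$, $j := l_k$. The task reduces to the one-variable inequality
\[
\sum_{j=N}^{\nu-(k-1)N} (j!\,(m-j)!)^{\beta} \;\le\; N_{\beta}\,(m-1)!^{\beta}.
\]

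Two observations drive the rest. First, the map $j\mapsto j!(m-j)!$ is unimodal with minimum near $m/2$, so on any subinterval of $[N,m-N]$ it attains its maximum at the endpoints, with value $N!(m-N)!$; a direct computation shows $\nu-(k-1)N \le m-N$ for $k\ge 2$, since the difference equals $(k-2)(N-1)\ge 0$. Hence every summand is bounded by $(N!(m-N)!)^{\beta}$, and the number of summands equals $T := \nu-kN+1 \le m-N+1$. Using the hypothesis $\beta \ge 1/(N-1)$, i.e.\ $1/\beta \le N-1$, I then chain
\[
T^{1/\beta} \;\le\; T^{N-1} \;\le\; (m-N+1)^{N-1} \;\le\; \prod_{i=1}^{N-1}(m-i) \;=\; \frac{(m-1)!}{(m-N)!},
\]
and combine with the elementary bound $N! \le N^{N-1}$ (valid for $N\ge 2$) to obtain $T \cdot (N!(m-N)!)^{\beta} \le N^{\beta(N-1)}(m-1)!^{\beta} = N_{\beta}(m-1)!^{\beta}$, which closes the induction. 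The edge case $T \le 0$ corresponds to $\nu<kN$ and yields $M_{k,\nu}=0$ trivially.

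For (ii), I partition the compositions appearing in $J^2_{k,\nu}$ according to the number $j$ of parts that are $\ge N$; the remaining $k-j$ parts equal~$1$ contribute $1$ to the factorial product. Choosing the positions of the large parts in $\binom{k}{j}$ ways and noting they sum to $\nu-(k-j)$ gives
\[
J^2_{k,\nu} \;=\; \sum_{j=0}^{k} \binom{k}{j}\, M_{j,\nu-k+j}
\]
with the convention $M_{0,0}=1$. For $\nu>k$ the $j=0$ term vanishes; applying (i) and the binomial theorem then yields the stated bound $J^2_{k,\nu} \le (\nu-k+1)!^{\beta}\,((1+N_{\beta})^{k}-1)/N_{\beta}$, and the boundary case $\nu=k$ (where all parts must equal $1$ and $J^2_{k,k}=1$) is absorbed since $(1+N_{\beta})^{k}-1 \ge N_{\beta}$ for $k\ge 1$. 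The main technical hurdle is the inductive step of (i): all the slack comes from combining $\beta(N-1)\ge 1$ with $N! \le N^{N-1}$, and both pieces must be tracked carefully; once (i) is established, (ii) is essentially a bookkeeping exercise.
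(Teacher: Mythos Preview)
Your argument is correct. Part~(ii) matches the paper's proof essentially verbatim (the paper indexes by the number $i$ of parts equal to~$1$, you index by the number $j=k-i$ of large parts, but the computation is the same binomial sum).

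For part~(i), however, your approach differs from the paper's. The paper does \emph{not} use induction on~$k$: instead it uses the elementary rearrangement inequality $(a+b)!\,c! \le b!\,(a+c)!$ (for $b\le c$) iteratively to obtain the uniform pointwise bound
\[
l_1!\cdots l_k! \;\le\; N!^{\,k-1}\,(\nu-(k-1)N)!
\]
for every admissible tuple, then multiplies by the number of tuples $\binom{\nu-kN+k-1}{k-1}$ and simplifies using $N!\le N^{N-1}$ and $\beta(N-1)\ge 1$. Your inductive route instead peels off one index, reduces to a single sum $\sum_j (j!(m-j)!)^\beta$, and controls it via the unimodality of $j\mapsto j!(m-j)!$ together with the same two ingredients $N!\le N^{N-1}$ and $\beta(N-1)\ge 1$. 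Both arguments extract exactly the same slack; the paper's version is slightly more direct (no induction, one global bound on the summand), while yours avoids the rearrangement inequality and makes the role of the endpoint $j=N$ more transparent. Either way the constant $N_\beta^{k-1}$ falls out naturally.
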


\begin{proof}
i) If $kN>\nu$, one has that $M_{k,\nu}=0$. Let us assume that
$kN\leq \nu$. One can check that, if $a,b,c\in \N$ with $b\leq c$,
then $(a+b)! c! \leq b! (a+c)!$. Therefore, for $l_1,l_2,\cdots,
l_k\geq N$ such that $l_1+\cdots +l_k =\nu$ one has that $l_1 ! l_2!
\leq N! (l_1 + l_2 -N)!$, that
\begin{equation*}
l_1! l_2 ! l_3! \leq N! (l_1+l_2-N)! l_3! =N! (l_1+ l_2-2N+N)! l_3!
\leq N!^2 (l_1+l_2+l_3-2N)!
\end{equation*}
and applying this procedure recursively we get
\begin{equation*}
l_1 ! l_2!\cdot \, \cdots \, \cdot l_k! \leq N!^{k-1} (l_1+\cdots +l_k -(k-1)N)! =
N!^{k-1} (\nu - (k-1)N)!.
\end{equation*}
On the other hand it is clear that
\begin{align*}
\#\{l_1+\cdots +l_k=\nu, \; l_i\geq N\} &= \#\{m_1+\cdots+m_k = \nu -kN,\; m_i\geq 0\} \\&=
\comb{\nu-kN+k-1}{k-1}.
\end{align*}
Therefore
$$
M_{k,\nu} \leq N!^{\beta(k-1)} (\nu-(k-1)N)!^{\beta} \comb{\nu-kN+k-1}{k-1}.
$$
Now we use that $N!^{\beta}\leq N^{\beta(N-1)}$ that
$$
 \comb{\nu-kN+k-1}{k-1} = \frac{\nu-kN+k-1}{k-1} \cdot \, \cdots \, \cdot \frac{\nu-kN +1}{1} \leq (\nu-kN+1)^{k-1}
$$
and that
$$
\frac{(\nu -k+1)!}{(\nu -(k-1)N)!}\geq (\nu -(k-1)N+1)^{(k-1)(N-1)}
$$
to obtain
\begin{align*}
M_{k,\nu}
&
\leq N_\beta^{k-1} (\nu-(k-1)N)!^{\beta} (\nu-kN+1)^{k-1} \\
&
\leq N_\beta ^{k-1} (\nu-k+1)!^\beta \frac{(\nu-kN+1)^{k-1}}{(\nu-(k-1)N+1)^{\beta(N-1)(k-1)}}.
\end{align*}
Finally the bound in i) follows because $\beta(N-1) \geq 1$.

ii) For $k= \nu$, $J^2_{k,\nu}= 1$ and the bound is obvious. Assume that $\nu>k$.
Then,
\begin{align*}
J_{k,\nu}^2&=\sum_{i=0}^{k-1}
\comb{k}{i}  M_{k-i,\nu-i}
\leq (\nu-k+1)!^\beta \sum_{i=0}^{k-1} \comb{k}{i}  N_\beta^{k-i-1} \\
&\leq (\nu-k+1)!^\beta \frac{(N_\beta+1)^k-1}{N_\beta}
\end{align*}
and the proof is complete.
\end{proof}

Now we are going to prove that $\tilde{K}(t)=\sum_{l\geq 1} \tilde{K}_l t^l$ is
Gevrey of order $\gamma$. Recall that $\gamma$ was defined in~\eqref{defalpha}.
\begin{proposition}\label{gevprop}
Let  $\bar{\delta}, \bar{\CC}$ be as in Lemma~\ref{scaling}. We take $\delta_0= 2(1+N_\gamma)$, with $N_{\gamma}=N^{\gamma(N-1)}$ and
$\e, \mu_0$ and $l_0$ according to the cases
\begin{itemize}
\item $N\leq M$,
\begin{align*}
\e&=\frac{1}{8 (1+|\sigma|)},\\
\mu_0& =1 \\
l_0&\geq  \max \left \{ 6 \bar{\CC} (1+N)^{M}\bar{\delta}^{-M} (N|a|)^{-1},
 N + \frac{4\mu_0^{N-1}}{N} (1+N)^N \left(1+\frac{2 \bar{\CC}}{|a| \bar{\delta}^N}\right)
 \right \}
\end{align*}
\item $M<N$,
\begin{align*}
\e&= \min\left\{ \frac{1}{8 (1+|\sigma|)}, \frac{1}{4\|B^{-1}_1\|(1+|\sigma|)^{1/2}}\right\},\\
\mu_0 &= \min \left \{ 1, \left [{\displaystyle \frac{N_{\gamma}}{6\bar{\CC}\|B^{-1}_1\|}\left (\frac{\bar \delta}{1+N_\gamma}\right )^{M}}\right ]^{\frac{1}{M-1}} \right \}, \\
l_0&\geq  N + \frac{4\mu_0^{N-1}}{N_\gamma} (1+N_\gamma)^N \left(1+\frac{2 \bar{\CC}}{|a| \bar{\delta}^N}\right ).
\end{align*}
\end{itemize}
Let $\tilde{F}$ rescaled with $\lambda$ depending on $\delta_0, \varepsilon, \mu_0,l_0$ as in Lemma~\ref{scaling} and $\tilde{K}$ be
the formal solution $\tilde{K}(t)=\sum_{j=1}^{\infty} \tilde{K}_j t^j$
of equation $\tilde{F}\circ \tilde{K} - \tilde{K}\circ \tilde{R}=0$. Then
$$
\Vert \tilde{K}_j \Vert \leq \mu_0 j!^{\gamma},\qquad  j\geq 0.
$$
\end{proposition}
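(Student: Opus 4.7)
The proof will proceed by strong induction on $j$. For the base cases, $\tilde K_0=0$ and $\tilde K_1=(1,0,0)^\top$ hold by construction; Lemma~\ref{lemmaN} also gives $\tilde K_j=0$ for $2\le j\le N-1$. For $N\le j\le l_0$, the bound $\|\tilde K_j\|\le \mu_0 j!^{\gamma}$ is precisely condition~(5) of Lemma~\ref{scaling}, which is what fixes the rescaling parameter $\lambda$ at the start. So the nontrivial content is the inductive step for $j>l_0$.

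Assuming $\|\tilde K_i\|\le \mu_0 i!^{\gamma}$ for every $i<j$, I will bound the three components $\tilde K_j^{z}$, $\tilde K_j^{y}$, $\tilde K_j^{x}$ separately, using the closed-form recursions in Proposition~\ref{construction} applied to the rescaled map. Each nonlinear contribution $\tilde G_k[\tilde K_{l_1},\dots,\tilde K_{l_k}]$ is estimated by $\|\tilde G_k\|\,\mu_0^k(l_1!\cdots l_k!)^{\gamma}$, so Lemma~\ref{scaling}(4) combined with the sum $J_{k,\nu}^2$ of Lemma~\ref{lem:J2knu}(ii) (which is the right sum because the vanishing hypotheses on $f_N, g_M, h_{\ge 2}$ in~\eqref{system} force $l_i\in\{1\}\cup[N,\infty)$) yields an upper bound of the form $\mu_0^k\lambda\bar{\CC}\tilde\delta^{-k}\,\frac{(N_\gamma+1)^k-1}{N_\gamma}(\nu-k+1)!^{\gamma}$. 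The composition contributions $\tilde K_k^{\cdot}\,R_{k,\nu}$ are handled using the induction hypothesis together with Lemma~\ref{technicallemmas}, whose bound carries the explicit powers $|\tilde a|^m(1+|\sigma|)^{m/2}$.

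To close the induction one divides by $j!^{\gamma}$ and must verify that each resulting sum over $k$ converges and contributes at most a fraction of $\mu_0$. The choice $\delta_0=2(1+N_\gamma)$ ensures $(1+N_\gamma)/\tilde\delta\le 1/2$, so the geometric-type series $\sum_k(\cdot)^k$ converges. The factors $\tilde a\le\varepsilon$ and the smallness of $\mu_0^{M-1}$ (when $M<N$) then bound the nonlinear contribution, while the factor $|\tilde a|^m(1+|\sigma|)^{m/2}\le(\varepsilon(1+|\sigma|)^{1/2})^m$ in Lemma~\ref{technicallemmas} bounds the composition contribution. For $\tilde K_j^{y}$ one uses Lemma~\ref{lemmaAl}: in the case $M\ge N$ the bound $\|\tilde\A_j\|\le 2\lambda^{N-1}/(j|a|)$ combines with $\lambda\bar\CC$ and the factor $\lambda^{-(N-1)}$ hidden in $\tilde a$ to cancel all $\lambda$-dependence (the extra summand $\tilde B_2\tilde K_j^{z}$ only appears when $N\ge M$ and is controlled by the already-proved bound on $\tilde K_j^{z}$); in the case $M<N$ one uses $\|\tilde\A_j\|=\|B_1^{-1}\|\lambda^{M-1}$, and the $\lambda$ cancels against the $\lambda^{-(M-1)}$ in $\tilde B_1$. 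The bound for $\tilde K_j^x$ uses the divisor $|\tilde a(j-N)|$ which is bounded below since $j>l_0\ge N$.

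The main obstacle is the bookkeeping rather than any single estimate: one must track three components, the two subcases $N\le M$ and $M<N$ (which change both $\gamma$ and the form of $\tilde\A_l$), and verify that the specific values of $\delta_0,\varepsilon,\mu_0,l_0$ in the statement are sharp enough that the three separate $k$-sums total at most $\mu_0 j!^{\gamma}$, so that the induction closes with exactly the same constant $\mu_0$ and does not degrade.
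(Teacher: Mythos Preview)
Your outline is correct and follows the paper's proof essentially step for step: strong induction with the base provided by Lemma~\ref{scaling}(5), then component-wise bounds on $\tilde K_l^z,\tilde K_l^y,\tilde K_l^x$ via the recursions of Proposition~\ref{construction}, with Lemmas~\ref{technicallemmas}, \ref{lem:J2knu}, and~\ref{lemmaAl} doing exactly the work you describe. One small correction: the reason $J_{k,\nu}^2$ (summing over $l_i\in\{1\}\cup[N,\infty)$) is the right object is that $\tilde K_l=0$ for $2\le l\le N-1$ by Lemma~\ref{lemmaN}, a fact you already noted for the base cases; the vanishing hypotheses on $f_N,g_M,h_{\ge2}$ instead restrict the range of $k$ (forcing $k\ge N$, $k\ge M$, $k\ge 2$ respectively in the three components), not the range of the $l_i$.
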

\begin{proof}
We use the formulas for $\KK$ and $E$ in
Proposition~\ref{construction} applied to $\tilde{F}$, and hence we
have that $\KK_l=0$ if $2\le l\leq N-1$. By Lemma~\ref{scaling} and
the choice of parameters we have that $\|\KK_j\| \leq \mu_0
j!^{\gamma}$ for $N\leq j\leq l_0$. We will use
Lemmas~\ref{technicallemmas} and \ref{lem:J2knu} with
$\beta=\gamma$.

We assume by induction that $\|\KK_j\| \leq \mu_0 j!^{\gamma}$ for $1\leq j\leq l-1$ for some $l>l_0$.

We start bounding $\tilde K_l^z$. We introduce
$$
H_l^1 =
\sum_{k=2}^{l}
\sum_{\begin{array}{c} \scriptstyle{l_1+\cdots+l_k=l} \\
\scriptstyle{1\leq l_i \leq l-1}
\end{array}} \GG^z_k[\KK_{l_1},\cdots,\KK_{l_k}],\qquad
H_l^2 = \sum_{k=N}^{l-N+1}\KK_k^z \RR_{k,l}
$$
so that $E_l^z = H_l^1- H_l^2$. We have that, by 4 of
Lemma~\ref{scaling}, taking $\lambda$ such that
$\mu_0(N_{\gamma}+1)/\tilde \delta < 1/2$ and using
Lemma~\ref{lem:J2knu},
\begin{align*}
\Vert H_l^1 \Vert  &
\leq \lambda \bar{\CC}\sum_{k=2}^l  \tilde{\delta}^{-k} \mu_0^k   J_{k,l}^2 \leq \frac{\lambda\bar \CC l!^{\gamma} }{N_{\gamma} }\sum_{k=2}^l \left [\frac{\mu_{0} (N_{\gamma}+1)}{\tilde{\delta}}\right ]^k \frac{(l-k+1)!^{\gamma}}{l!^{\gamma}} \\
&\leq 2 \frac{\lambda \bar \CC l!^\gamma}{N_\gamma} \left [\frac{\mu_{0} (N_{\gamma}+1)}{\tilde{\delta}}\right ]^2
%\\&
\leq \mu_0 l!^{\gamma} \lambda^{-1} \big [ 2 \bar\CC \bar \delta^{-2} (N_\gamma +1)^2 N_\gamma^{-1} \big ].
\end{align*}
Moreover, for $H_l^2$, it is clear that $ \Vert H_l^2 \Vert  \leq
\sum_{k=N}^{l-N+1} \mu_0 k!^{\gamma} | \RR_{k,l}| = \mu_0
\sum_{k=N}^{l-N+1} J_{k,l}^1. $ Then, using
Lemma~\ref{technicallemmas} and writing $m=(l-k)/(N-1)$,
\begin{align*}
\Vert H_l^2 \Vert  &\leq \mu_0 (l-N+1)!^{\gamma}\sum_{m=1}^{\left [ \frac{l-N}{N-1}\right ]} (l-mN+1) |\tilde{a}|^m (1+|\sigma|)^{m/2}\\
&\leq \mu_0 (l-N+1)!^{\gamma} (l-N+1) 2 |\tilde{a}|(1+|\sigma|)^{1/2} \\
&\leq \mu_0 l!^{\gamma} \lambda^{-N+1} \big [ 2|a| (1+|\sigma|)^{1/2} \big ].
\end{align*}
Therefore,  since $\Vert \KK_z^l \Vert \leq  \Vert (C-\Id)^{-1}
\Vert \Vert E_{l}^z \Vert \leq  \Vert (C-\Id)^{-1} \Vert  (\Vert
H_l^1 \Vert + \Vert H_l^2 \Vert) $,
\begin{equation}\label{boundKzl}
\Vert \KK_z^l \Vert \leq \mu_0 l!^{\gamma} \lambda^{-1} \left[2 \bar\CC \bar \delta^{-2} (N_\beta +1)^2 N_\beta^{-1} + \lambda^{-N+2} \big [ 2|a| (1+|\sigma|)^{1/2} \big ]\right ]
\end{equation}
and taking $\lambda$ big enough we obtain $\|\KK_z^l\|\leq \mu_0 l!^{\gamma}$.

To bound $\KK_l^y$ we introduce $H_l^3$ and $H_l^4$ so that
$E_y^{l+L-1} =H_{l}^3 - H_l^4$:
$$
H_l^3 =
\sum_{k=M}^{l+L-1}
\sum_{\begin{array}{c} \scriptstyle{l_1+\cdots+l_k=l+L-1} \\
\scriptstyle{1\leq l_i \leq \min\{l-1,l+L-M\}}
\end{array}} \GG^y_k[\KK_{l_1},\cdots,\KK_{l_k}],
\quad
H_l^4 = \sum_{k=N}^{\min\{l-1, l+L-N\}}\KK_k^y \RR_{k,l+L-1}.
$$

We distinguish two cases, when $L=\min\{N,M\}=N$ and $L=M<N$. First
we deal with the case $L=N$. In this case $\gamma=1/(N-1)$,
$N_\gamma=N$ and $\mu_0=1$. By item~ii) of Lemma~\ref{lem:J2knu}, we
have that
\begin{align*}
\|\tilde{\A}_l H_l^3\| &\leq\lambda^{N}\bar{\CC}\frac{1}{l|a|}\sum_{k=M}^{l+N-1}
J_{k,l+N-1}^2 \frac{1}{\tilde{\delta}^k} \leq  \lambda^{N}\bar{\CC}\frac{1}{N l|a|}
\sum_{k=M}^{l+N-1} (l+N-k)!^{\gamma}
\left (\frac{1+N}{\tilde{\delta}}\right )^{k}\\
&\leq \lambda^{N}\bar{\CC}\frac{1}{N l|a|}(l+N-M)!^{\gamma}
\sum_{k=M}^{l+N-1} \left (\frac{1+N}{\tilde{\delta}}\right )^{k} \\
&\leq \lambda^{N}\bar{\CC}\frac{2}{N l|a|}(l+N-M)!^{\gamma}
\left (\frac{1+N}{\tilde{\delta}}\right )^{M},
\end{align*}
since we are assuming that $\frac{1+N}{\tilde{\delta}} \leq\frac{1+N}{\delta_0}= 1/2$.
Therefore, using that $\tilde{\delta}=\lambda \bar\delta$, that $M\geq N=L$ and that $l\geq l_0$,
$$
\|\tilde{\A}_l H_l^3\| \leq l!^{\gamma} l_0^{-1} \left  [\lambda^{N-M}\bar{\CC}\frac{2}{N |a|}
\left (\frac{1+N}{\bar{\delta}}\right )^{M}\right ] \leq \frac{1}{3} l!^{\gamma}
$$
by definition of $l_0$.

Now we deal with $H_l^4$. It is clear that
$\|H_l^4\| \leq  \sum_{k=N}^{l-1} J_{k,l+N-1}^1$.
Recall that $J_{k,l+N-1}^1\neq 0$ if and only if $k=l+N-1-m(N-1)$
for some $m\in \N$. If $m=1$, then $k=l+N-1-(N-1)=l$ which is a contradiction,
because $k\leq l-1$. Moreover, since $k\geq N$, $m\leq (l-1)/(N-1)$. Therefore, using Lemmas~\ref{lemmaAl} and~\ref{technicallemmas}:
$$
\| \tilde{\A}_l H_l^4\| \leq 2\lambda^{N-1}\frac{1}{|a|l}
\sum_{m=2}^{\left [\frac{l-1}{N-1}\right ]} l!^{\gamma} (l+N-mN) |\aa|^m
(1+|\sigma|)^{m/2}.
$$
Then, since $|\aa (1+|\sigma|)^{1/2}|\leq \e (1+|\sigma|) \leq 1/8$ and $\aa = \lambda^{-(N-1)}a$, we have that
$$
\| \tilde{\A}_l H_l^4\|
\leq 4\lambda^{N-1}\frac{1}{|a|l} l!^{\gamma} (l-N)
|\aa|^2 (1+|\sigma|) \leq l!^{\gamma} \lambda^{-(N-1)}[ 4 |a|(1+|\sigma|)] \leq \frac{1}{3} l!^{\gamma}
$$
if $\lambda$ is big enough. On the one hand, when $N<M$,
$\tilde K_{l}^y =\tilde{\mathcal{A}}_l (H_{l}^3 + H_{l}^4)$ and the previous bounds imply the induction result. On the other hand, when $N=M$,
$\tilde K_{l}^y =\tilde{\mathcal{A}}_l (H_{l}^3 + H_{l}^4 + \tilde{B}_2 \KK_l^z)$. By~\eqref{boundKzl}, the term $\|\tilde{\A}_l \tilde{B}_2 \KK_{l}^z\|$ is bounded by
$l!^{\gamma}/3 $ if $\lambda$ is large enough and therefore, also in this case, we are done.

Now we deal with the case $L=M<N$. Recall that $\gamma=1/(N-M)$. Then
\begin{align*}
\|\tilde{\A}_lH_l^3\| &\leq\lambda^{M}\bar{\CC}\|\tilde{B}_1^{-1}\|\sum_{k=M}^{l+M-1}
J_{k,l+M-1}^2 \frac{\mu_0^k }{\tilde{\delta}^k}
\\&
\leq \lambda^{M}\bar{\CC}\|\tilde{B}_1^{-1}\|
\sum_{k=M}^{l+M-1} \frac{1}{N_\gamma}(l+M-k)!^{\gamma}
\left (\frac{\mu_0(1+N_\gamma)}{\tilde{\delta}}\right )^{k}\\
&\leq \lambda^{M}\bar{\CC} \frac{\|\tilde{B}_1^{-1}\|}{N_\gamma} l!^{\gamma}
\sum_{k=M}^{l+M-1} \left (\frac{\mu_0(1+N_\gamma)}{\tilde{\delta}}\right )^{k}
\\&
\leq \lambda^{M}2 \bar{\CC}\frac{\|\tilde{B}_1^{-1}\|}{N_\gamma} l!^{\gamma}
\left (\frac{\mu_0(1+N_\gamma)}{\tilde{\delta}}\right )^{M}
\end{align*}
since $\mu_0\leq 1$ and $\frac{1+N_\gamma}{\tilde{\delta}} \leq
\frac{1+N_\gamma}{\delta_0}= 1/2$. Moreover, using that $\tilde{\delta}=\lambda \bar \delta$,
$$
\|\tilde{\A}_lH_l^3\| \leq \mu_0 l!^{\gamma} \mu_0^{M-1} \left [2\bar{\CC}\frac{\|\tilde{B}_1^{-1}\|}{N_\gamma}\left (\frac{1+N_\gamma}{\bar{\delta}}\right )^{M}\right ]
\leq \frac{1}{3}  \mu_0 l!^{\gamma}
$$
by definition of $\mu_0$.

Now we deal with $H_l^4$. By induction hypothesis,
$\| H_l^4\| \leq \mu_0  \sum_{k=N}^{l+M-N} J_{k,l+M-1}^1$.
Then, using Lemmas~\ref{lemmaAl} and~\ref{technicallemmas}:
\begin{align*}
\| \tilde{\A}_l H_l^4\| &\leq \mu_0 \lambda^{M-1}\|\tilde{B}_1^{-1}\|
\sum_{m=1}^{\left [\frac{l+M-N-1}{N-1}\right ]} (l+M-N)!^{\gamma} (l+M-mN)
|\aa|^m (1+|\sigma|)^{m/2} \\
&\leq 2\mu_0 \lambda^{M-1}\|\tilde{B}_1^{-1}\| (l+M-N)!^{\gamma} (l+M-N)
|\aa| (1+|\sigma|)^{1/2}\\
&\leq 2 \mu_0 \lambda^{M-N} \|\tilde{B}_1^{-1}\||a|(1+|\sigma|)^{1/2}  (l+M-N)!^{\gamma} (l+M-N)
\end{align*}
since $|\aa| (1+|\sigma|)^{1/2}\leq \e(1+|\sigma|)^{1/2} \leq 1/2$. Now we stress that, since $\gamma=1/(N-M)$,
$$
(l+M-N)!^{\gamma} (l+M-N) \leq l!^{\gamma} \frac{l+M-N}{(l+M-N+1)^{\gamma(N-M)}}\leq l!^{\gamma}.
$$
Therefore, by Lemma~\ref{scaling},
$$
\| \tilde{\A}_l H_l^4\| \leq \mu_0 l!^{\gamma}\lambda^{M-N} \big [ 2
\|\tilde{B}_1^{-1}\|\lambda^{M-N} |a| (1+|\sigma|)^{1/2}
l!^{\gamma}\big] \leq \frac{1}{3} \mu_0 l!^{\gamma},
$$
if $\lambda$ is big enough. Notice that, by Lemma~\ref{lemmaAl}, $\tilde{\A}_l \tilde{B}_2 = \A_l B_2$, so that by~\eqref{boundKzl}
we can take $\lambda$ big enough such that $\|\tilde{\A}_l \tilde{B}_2 \KK_{l}^z\| \leq \mu_0 l!^{\gamma}/3$.
Therefore, we also get in this case that $\|\KK_l^y\| \leq \mu_0 l!^{\gamma}$.

Now we deal with $\KK_l^x$. We decompose $\KK_{l}^x =H_l^5+ H_l^6+H_l^7+H_l^8$ with
\begin{align*}
H_l^5&=\frac{1}{l-N}
\sum_{\begin{array}{c}\scriptstyle{l_1+\cdots+l_N=l+N-1} \\
\scriptstyle{1\leq l_i \leq l-1}
\end{array}} \prod_{i=1}^N \KK^x_{l_i} ,\qquad
H_l^6 =\frac{1}{\aa(l-N)}\sum_{k=N}^{l-1} \KK^x_k R_{k,l+N-1},\\
H_l^7&=\frac{1}{\aa(l-N)}\sum_{k=N}^{l+N-1}
\sum_{\begin{array}{c} \scriptstyle{l_1+\cdots+l_k=l+N-1} \\
\scriptstyle{1\leq l_i \leq l}
\end{array}} G^x_k[\KK_{l_1},\cdots, \KK_{l_k}],
\quad H_l^8=\frac{\big (\tilde{v}^\top \KK_l^y + \tilde{w}^{\top}
\KK_l^z\big )}{\aa (l-N)},
\end{align*}
where in $H_l^7$ we use that $\KK_2^x = \dots = \KK_{N-1}^x = 0$ by
Lemma~\ref{lemmaN} and that $\KK^y_l,\KK^z_l$ are already known. We
notice that, using ii) of Lemma~\ref{lem:J2knu} and that $\mu_0\leq
1$
\begin{align*}
|H_l^5| &\leq \mu_0^N \frac{1}{l-N}\sum_{\begin{array}{c}
\scriptstyle{l_1+\cdots+l_N=l+N-1} \\ \scriptstyle{l_i=1\text{ or}
N\leq l_i \leq l-1}
\end{array}} (l_1!  \cdot \, \cdots \, \cdot l_{N}!)^{\gamma}
\leq \mu_0 l!^{\gamma} \frac{1}{l_0-N} \left [\frac{\mu_0^{N-1} (1+N_\gamma)^N }{N_\gamma} \right ] \\ &\leq \frac{1}{4} \mu_0 l!^{\gamma}
\end{align*}
by definition of $l_0$.
To bound $|H_l^6|$ we use Lemma~\ref{technicallemmas} and we get
\begin{align*}
|H_{l}^6|  \leq& \mu_0 \frac{1}{|\aa|(l-N)} \sum_{k=N}^{l-1} k!^{\gamma}
R_{k,l-N+1} \\
\leq& \mu_0 \frac{1}{|\aa|(l-N)}  l!^\gamma \sum_{m=2}^{\left[\frac{l-1}{N-1}\right]} (l-N(m-1)) |\aa|^m (1+|\sigma|)^{m/2} \\
 \leq&  \mu_0  l!^\gamma \frac{1}{|\aa|} 2 |\aa|^2 (1+|\sigma|)  =  \mu_ 0 l!^\gamma \lambda^{-(N-1)} \big [2 |a| (1+|\sigma|)\big] \leq  \frac{1}{4} \mu_0 l!^{\gamma} ,
\end{align*}
where we used that $|\aa| (1+|\sigma|) \leq \varepsilon (1+|\sigma|) \leq \frac{1}{2}$ and that $\lambda$ is large enough.
To bound $H_l^7$, we recall that $\frac{1+N_\gamma}{\tilde{\delta}} \leq 1/2$,
$\aa= \lambda^{1-N} a$ and that $\tilde{\delta}= \lambda
\bar{\delta}$. Then, by definition of $l_0$:
\begin{align*}
|H_{l}^7| \leq& \frac{\lambda \bar{\CC}}{|\aa|(l-N)}
\sum_{k=N}^{l+N-1} \tilde{\delta}^{-k} \mu_0^k J_{k,l+N-1}^2 \\
\leq& \frac{\lambda \bar{\CC}}{|\aa|(l-N)} \sum_{k=N}^{l+N-1}
(l+N-k)!^{\gamma}
\frac{1}{N_\gamma} \left(\frac{\mu_0(1+N_\gamma)}{\tilde{\delta}}\right)^k \\
\leq& \frac{2\lambda \bar{\CC}}{|\aa|(l-N)N_\gamma} l!^{\gamma}
\left(\frac{\mu_0(1+N_\gamma)}{\tilde \delta}\right)^N
\leq
\mu_0  l!^{\gamma} \frac{1}{l_0-N} \left [\frac{2 \bar{\CC } \mu_0^{N-1} }{|a| N_\gamma} \left(\frac{1+N_\gamma}{\bar{\delta}}\right)^N \right ]\\
\leq &\frac{1}{4} \mu_0  l!^{\gamma}.
\end{align*}

Moreover, since by Lemma~\ref{scaling}, $\tilde{v}=\lambda^{-(N-1)} v$ and $\tilde{w}=\lambda^{-(N-1)}w$, $H_l^8$ can be made smaller than
$\mu_0 l!^{\gamma}/4$ provided $\lambda$ is big enough. Then $|\tilde K_l^x| \leq \mu_0 l!^{\gamma}$.
\end{proof}

%%%%%%%%%%%%%%%%%%%%%%%%%%%%%%%%%%%%%%%%%%%%%%%%%%%%%%%%%%%%%%%%%%%%%%%%%%%%%%%%%%%%%%%%%%%%%%%%%%%%%%%%%%%

\section{A solution of the invariance equation \texorpdfstring{$F\circ K-K\circ R=0$}{Lg}}\label{sec:existence}

In this section we prove Theorem~\ref{mtexistencesolution}, that is,
there exists a real analytic function which is a true solution of
the invariance equation in an appropriate domain and it is
$\gamma$-Gevrey asymptotic to the formal solution $\hat K$ found in
the previous section.

We will use some basic properties about Gevrey functions. A summary
of these properties can be found in~\cite{BH08}. See
also~\cite{Balser94}.

We begin by applying Borel-Ritt's theorem for Gevrey functions to
the formal solution $\hat K$ (\cite[p.17]{Balser94}). Let
$0<\beta<\alpha \pi$ be an opening of a sector. Then there exist
$\rho$ small enough and a $\gamma$-Gevrey real analytic function,
$K_e$, defined on the sector $S(\beta,\rho)$, which is
$\gamma$-Gevrey asymptotic to the formal solution $\hat K$
(see~\eqref{def:sector} for the definition of the sector). Then,
\begin{equation*}
F\circ K_e - K_e \circ R = E
\end{equation*}
being $E$ a real analytic function on $S(\beta,\rho)$
$\gamma$-Gevrey asymptotic  to the identically zero formal series.
As a consequence, for any closed sector
$$
\bar{S}_1:=\bar{S}_1(\bar{\beta},\bar{\rho})= \{t\in \C : 0<|t|\leq \bar{\rho} ,\; |\text{arg} (t)| \leq \bar \beta /2  \} \subset S(\beta,\rho)
$$
there exist $c_0,c$ such that
$$
\|E(t)\| \leq c_0 \text{exp}\left (-c|t|^{-1/\gamma}\right ), \qquad \text{if   }t\in \bar{S}_1.
$$

We look for a real analytic function $H$ defined on $\bar{S}_1$ such
that
\begin{equation}\label{eqHfirst}
F\circ (K_e + H) - (K_e+H) \circ R =0,\qquad \sup_{t\in \bar{S}_1} |H(t)|  \text{exp}\left (c|t|^{-1/\gamma}\right ) <+\infty.
\end{equation}
For that we rewrite~\eqref{eqHfirst} as a fixed point equation. Let
us to introduce $\hat C(t)$ and $\mathcal{N}$ as:
$$
\hat C(t)= \left ( \begin{array}{ccc} \Id & 0 &0 \\ 0 & \Id + B_1 [K_e^x(t)]^{M-1} & 0 \\ 0 & 0 & C \end{array}\right ) ,
\quad \mathcal{N}(H) =F(K_e + H) - F(K_e)- \hat C(t) H.
$$
Then the equation~\eqref{eqHfirst} becomes
\begin{equation}\label{eqHsecond}
\hat C(t) H(t)  -H\circ R(t)= -E(t) -\mathcal{N}(H)(t).
\end{equation}

We introduce $S_1(\bar \beta,\bar \rho)= \text{int}(\bar{S}_1(\bar \beta,\bar \rho))$ and the Banach spaces
$$
\mathcal{X}_{\ell,m}  = \{H:\bar{S}_1(\bar \beta,\bar \rho)\cup\{0\} \to \C^{m},\;  \mathcal{C}^0, \text{ real analytic in }
S_1 (\bar \beta,\bar \rho)  \text{ and }  \|H\|_{\ell} <+\infty\}
$$
with
$$
\|H\|_{\ell}:=\sup_{t\in \bar{S}_1} \|H(t)\|  |t|^{-\ell} \text{exp}\left (c|t|^{-1/\gamma}\right ).
$$

It is straightforward to check that,  if $H_1,H_2$ are
$\mathcal{C}^0$ functions in $\bar S(\bar \rho, \bar \beta) \cup
\{0\}$, satisfying that $H_1(0)=H_2(0)=0$, then, denoting $\Delta
H(t) = H_1-H_2$,
\begin{align}\label{boundN}
\|\mathcal{N}^{x,y}(H_1)(t) \| \leq& a_1 \|H_1(t)\|^2 + |t|^{M-1} \big (a_2 |t| + a_3 |t|^{N-M} \big)\|H_1(t)\| \notag \\  &+ a_4 \|B_2 \| \|H_1(t)\|, \notag\\
\|\mathcal{N}^z(H_1)(t) \| \leq & b_1 \|H_1(t)\|^2 + b_2 |t| \|H_1(t)\|, \notag\\
\|\mathcal{N}^{x,y} (H_1)(t)-\mathcal{N}^{x,y}(H_2)(t)\| \leq & a_1 \| \Delta H(t)\|^2 \\ &+
|t|^{M-1} \big (a_2 |t| + a_3 |t|^{N-M}  + a_4 \|B_2 \| \big )\| \Delta H(t)\|, \notag\\
\|\mathcal{N}^{z} (H_1)(t)-\mathcal{N}^{z}(H_2)(t)\| \leq & b_1 \|\Delta H(t)\|^2+ b_2 |t| \|\Delta H(t)\|. \notag
\end{align}
To prove the above inequalities we take into account that $K_e^y,K_e^z =\O(|t|^2)$ as well as the form~\eqref{system} of $F$.

We observe that, by scaling the variable $z$, the norm $\|B_2\|$ is as small as we need. In addition, the matrix $C$ does not change
with this scaling.

We are forced to distinguish two cases according to the different values of $M$ and $N$.
\subsection{The case \texorpdfstring{$M\geq N$}{Lg}}
Recall that we are assuming that $\Spec C\subset \{ z\in \C :
|z|\geq 1\}$. In this case we reinterpret~\eqref{eqHsecond} as the
fixed point equation
\begin{equation}\label{fpeMgeN}
H(t) = \mathcal{F}(H)(t):=(\hat C(t) )^{-1}\left [H\circ R(t) -E(t) -\mathcal{N}(H)(t)\right ]
\end{equation}
which is, essentially, the same as the one considered in~\cite{BH08}.
A crux point is that, if $H\in \mathcal{X}_{0,1+d+d'}$, then
\begin{equation}\label{HcompR}
\|H\circ R \|_{0} \leq e^{-\frac{a}{2} c (N-1) \cos \lambda } \|H \|_0
\end{equation}
so that this term is contracting. Following the steps
in the mentioned work, one can easily check that, taking
$0<\beta < \alpha \pi$ and $\rho$ small enough, the fixed point equation~\eqref{fpeMgeN}
has a unique solution belonging to the Banach space $\mathcal{X}_{0,1+d+d'}$
for any $\bar \rho,\bar \beta$ such that
$\bar{S}_1(\bar \beta,\bar \rho) \subset S(\beta,\rho)$.
As a consequence, the invariance condition~\eqref{eqHfirst} can be solved and the solution $K_e+H$
is analytic in the sector $S(\beta,\rho)$ and $\alpha$-Gevrey asymptotic to the formal solution
$\hat K$.

\subsection{The case \texorpdfstring{$M<N$}{Lg}}
When $M<N$ the strategy developed for the case $M\geq N$ can not be
applied. In this case bound~\eqref{HcompR} is not longer true.
Indeed, as shown in Lemma~\ref{controlRj} below (see
also~\cite{BH08}), $|R(t)| \leq |t|(1+  \nu |t|^{N-1})^{-\alpha}$
with $\nu>0$. Then, if $H\in \mathcal{X}_{0,1+d+d'}$,
$$
e^{c |t|^{-1/\gamma}}\|H\circ R(t)\| \leq \|H\|_{0} e^{-c|R(t)|^{-(N-M)}-c|t|^{-(N-M)}} \leq \|H\|_{0}
e^{-c \frac{a}{2} (N-1)\cos \lambda |t|^{M-1})}.
$$
This implies that the term $H\circ R$ is not contracting. For this
reason we rewrite \eqref{eqHsecond} as another fixed point equation.
We recall that, when $M<N$, $\gamma=1/(N-M)$ and we are assuming
that $\Spec B_1 \subset \{z \in \C: \Re z>0\}$ and that $\Spec C
\subset \{z \in \C: |z|>1\}$.

First we define an appropriate norm in $\C^{1+d+d'}$. We take a norm
in $\C^{d'}$ such that $\| C^{-1} \|_{d'} <1$. Notice that, since
$\Spec B_1 \subset \{z\in \C : \Re z>0\}$, there exists a norm in
$\C^{d}$ such that $\| \Id - B_1 t^{M-1} \|_d <1 -\mu |t|^{M-1} \leq
1$. This follows from the fact that $\Id - B_1 t^{M-1}$ is in Jordan
form if $B_1$ is in Jordan form as well. Therefore, since
$K_e^{x}(t) = t+\O(t^2)$, taking $\rho$ small enough,
$$
\| \big (\Id - B_1 [K_e^x(t)]^{M-1}\big )^{-1} \|_d \leq 1.
$$
We finally define
$$
\|(x,y,z)\|=\max\{|x|, \|y\|_d, \| z \|_{d'}\}.
$$
If necessary, we will write $\|(x,y)\| = \max\{|x|,\|y\|_d\}$.

We rewrite equation~\eqref{eqHsecond} as:
\begin{equation}\label{eqHthird}
\mathcal{G}(H)(t)=-E(t) + \mathcal{N}(H)(t) +(0,0,H^z \circ R(t))^\top,
\end{equation}
with $\mathcal{G}(H)(t)=\hat C(t) H(t) -(H^x\circ R(t), H^y \circ
R(t),0)$. The usual way to proceed is: i) to find a formal inverse,
$\mathcal{S}$, of the linear operator $\mathcal{G}$, ii) to prove
that $\mathcal{S}$ is continuous in appropriate Banach spaces and
iii) to write equation~\eqref{eqHthird} as a fixed point equation
and to apply the fixed point theorem.

The formal operator
$\mathcal{S}=(\mathcal{S}^x, \mathcal{S}^y, \mathcal{S}^z)$, acting on analytic functions $T$, defined by
\begin{align*}
\mathcal{S}^x (T)&= \sum_{j\geq 0}  T^x  \circ R^j, \\
\mathcal{S}^y (T)&= \sum_{j\geq 0}  \prod_{i=0}^j (\Id + B_1 [K_e \circ R^i]^{M-1})^{-1} T^y \circ R^j, \\
\mathcal{S}^z (T)&= C^{-1} T^z
\end{align*}
is the formal inverse of $\mathcal{G}$. The proof of this fact is straightforward.
We (formally) rewrite equation~\eqref{eqHthird} as:
\begin{equation}\label{fpeM<N}
H = \mathcal{F}(H) := -\mathcal{S}\big (E+\mathcal{N}(H) + (0,0,H^z \circ R)^\top\big )
\end{equation}

To obtain accurate bounds for $\mathcal{S}$, we need precise estimates on the convergence of the iterates $R^k(t)$ for
$t\in S(\beta,\rho)$. Given $\nu>0$, let $\mathcal{R}_{\nu}:[0,\infty) \to \R$ be defined by
$$
\mathcal{R}_{\nu}(u)=\frac{u}{(1+\nu u^{N-1})^{\alpha}}.
$$
\begin{lemma}\label{controlRj}
Let $R: S(\beta,\rho)\to \C$ be a map of the form $R(t)=t-at^N + \mathcal{O}(|t|^{N+1})$, $a>0$. Assume that
$\beta < \alpha \pi$. Then, for any $0<\nu<a(N-1) \cos \lambda$, with $\lambda=(N-1)\beta/2$, there exists $\rho$
small enough such that
$$
|R^{k}(t)|\leq \mathcal{R}_{\nu}^k (|t|) = \frac{|t|}{\big (1+ k \nu |t|^{N-1}\big )^{\alpha}}.
$$
In addition, $R$ maps $S(\beta,\rho)$ into itself.
\end{lemma}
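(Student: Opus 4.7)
The plan is to reduce everything to a clean one‑step estimate via the standard parabolic conjugation $\phi(t)=t^{-(N-1)}$, and then iterate. Throughout, $\lambda=(N-1)\beta/2<\pi/2$ (because $\beta<\alpha\pi=\pi/(N-1)$), so $\cos\lambda>0$ and there is room between $\nu$ and $(N-1)a\cos\lambda$.

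First I would prove the sector invariance: for $\rho$ small, $R(S(\beta,\rho))\subset S(\beta,\rho)$. Writing $t=re^{\mathrm{i}\varphi}$ with $r<\rho$ and $|\varphi|\leq\beta/2$, we have $R(t)/t = 1 - a t^{N-1} + O(|t|^N)$. Taking real parts, $\mathrm{Re}(R(t)/t) = 1 - a r^{N-1}\cos((N-1)\varphi) + O(r^N)$, which is strictly less than $1$ since $\cos((N-1)\varphi)\geq\cos\lambda>0$; hence $|R(t)|<r<\rho$. For the argument, the imaginary part of $-a t^{N-1}$ equals $-a r^{N-1}\sin((N-1)\varphi)$, which has sign opposite to $\varphi$, so $R(t)/t$ is rotated toward the positive real axis and $|\arg R(t)|\leq|\arg t|$ up to $O(r^N)$ corrections that are absorbed by taking $\rho$ small.

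Second I would establish the one‑step estimate in the form
\begin{equation*}
|\phi(R(t))| \;\geq\; |\phi(t)| + \nu, \qquad t\in S(\beta,\rho),
\end{equation*}
which is equivalent to $|R(t)|^{-(N-1)}\geq |t|^{-(N-1)}+\nu$. A direct Taylor expansion of $\phi\circ R$ gives
\begin{equation*}
\phi(R(t))-\phi(t) \;=\; (N-1)a + r_0(t), \qquad r_0(t)=O(|t|^{N-1}).
\end{equation*}
Squaring $|\phi(t)+(N-1)a+r_0(t)|$, using $\phi(t)=r^{-(N-1)}e^{-\mathrm{i}(N-1)\varphi}$ and $\cos((N-1)\varphi)\geq\cos\lambda$, yields
\begin{equation*}
|\phi(R(t))|^2 \;\geq\; r^{-2(N-1)} + 2(N-1)a\cos\lambda\,r^{-(N-1)} + O(1),
\end{equation*}
whereas $(r^{-(N-1)}+\nu)^2 = r^{-2(N-1)} + 2\nu r^{-(N-1)} + \nu^2$. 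The difference is $2r^{-(N-1)}\bigl[(N-1)a\cos\lambda-\nu\bigr]$ plus $O(1)$ terms, and since $(N-1)a\cos\lambda-\nu>0$ the linear‑in‑$r^{-(N-1)}$ term dominates for $\rho$ small enough, giving the one‑step bound uniformly on $S(\beta,\rho)$.

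Third I would iterate: by step one, $R^{k-1}(t)\in S(\beta,\rho)$, so step two applied at $R^{k-1}(t)$ and induction on $k$ give $|\phi(R^k(t))|\geq|\phi(t)|+k\nu$, i.e.\ $|R^k(t)|^{-(N-1)}\geq|t|^{-(N-1)}+k\nu$, which upon inversion is exactly $|R^k(t)|\leq\mathcal{R}_\nu^k(|t|)$ (as one checks by the semigroup identity $\mathcal{R}_\nu^{k}(u)^{-(N-1)}=u^{-(N-1)}+k\nu$). The only real subtlety is controlling the $O(|t|^{N-1})$ remainder $r_0(t)$ in step two uniformly on the sector so that a single choice of $\rho$ works for all $k$; this is automatic because both the sector invariance and the one‑step estimate depend only on $\rho$ and the fixed gap $(N-1)a\cos\lambda-\nu>0$.
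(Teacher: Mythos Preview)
Your argument is correct and is essentially the paper's proof transported by the Fatou coordinate $\phi(t)=t^{-(N-1)}$. The paper proves the one--step bound directly as $|R(t)|\le \mathcal R_\nu(|t|)$ by comparing the first--order expansions $|R(t)|=|t|\bigl(1-a|t|^{N-1}\cos((N-1)\theta)+O(|t|^N)\bigr)$ and $\mathcal R_\nu(u)=u\bigl(1-\alpha\nu u^{N-1}+O(u^{2N-2})\bigr)$, then iterates using that $\mathcal R_\nu$ is increasing and is the time--$1$ map of $\dot u=-\alpha\nu u^N$, so that $\mathcal R_\nu^{\,k}(u)=u/(1+k\nu u^{N-1})^{\alpha}$. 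Your inequality $|R(t)|^{-(N-1)}\ge |t|^{-(N-1)}+\nu$ is literally the same one--step bound after applying $u\mapsto u^{-(N-1)}$, and your additive iteration is the monotonicity argument in these coordinates. The paper's version avoids the squaring step, while yours makes the semigroup identity $\mathcal R_\nu^{\,k}(u)^{-(N-1)}=u^{-(N-1)}+k\nu$ explicit; neither buys anything the other does not.

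One small imprecision: under the lemma's hypothesis $R(t)=t-at^N+O(|t|^{N+1})$ one only gets $r_0(t)=\phi(R(t))-\phi(t)-(N-1)a=O(|t|)$, not $O(|t|^{N-1})$ (the sharper bound would require the remainder in $R$ to be $O(|t|^{2N-1})$, which happens to hold for the polynomial $R$ used elsewhere in the paper but is not assumed here). This does not affect your conclusion: simply absorb $|r_0(t)|\le C|t|$ into the coefficient of $r^{-(N-1)}$, obtaining $|\phi(R(t))|^2\ge r^{-2(N-1)}+2\bigl[(N-1)a\cos\lambda - C r\bigr]r^{-(N-1)}+O(1)$, and the comparison with $(r^{-(N-1)}+\nu)^2$ goes through for $\rho$ small since $(N-1)a\cos\lambda-\nu>0$.
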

\begin{proof}
Writing $t=|t| e^{i\theta}$, the computation of the modulus of $R(t)$ gives
$$
|R(t)| =|t|\big [1 -  a |t|^{N-1} \cos (N-1) \theta + \mathcal{O}(|t|^{N})\big ]
$$
and $\mathcal{R}_{\nu}(u)=u(1- \alpha \nu u^{N-1} + \mathcal{O}(u^{2N-2}))$. Therefore, since $a \cos (N-1) \beta/2> \alpha \nu$ and
$\rho$ is small enough, $|R(t)| \leq \mathcal{R}_{\nu}(|t|)$, for all $t\in S(\beta,\rho)$.

Since $\mathcal{R}_{\nu}$ is the flow time $1$ of the one dimensional equation $\dot{u}=-\alpha \nu u^N$, i.e. $\mathcal{R}_{\nu}(u)=\varphi(1,u)$, then
$\mathcal{R}_{\nu}^{k}$ is the flow time $k$ of the same equation, that is:
$$
\mathcal{R}_{\nu}^{k} (u)= \varphi(k,u) = \frac{u}{\big (1+k\nu u^{N-1}\big )^{\alpha}}.
$$
Using that $\frac{d}{du} \mathcal{R}_{\nu}(u)>0$, it is easy to prove by induction that $|R^k(t)|\leq \mathcal{R}_{\nu}^k(|t|)$.

To prove that $R(S(\beta,\rho))\subset S(\beta,\rho)$ is straightforward, see~\cite{BH08}.
\end{proof}

Now we deal with the linear operator $\mathcal{S}$.
\begin{lemma}\label{linopS} Let $0<\beta<\alpha \pi/2$, $0<\nu<a(N-1) \cos \lambda$ and $\ell,\ell'\in \mathbb{R}$.
If $\rho$ is small enough, then, for any $\bar \beta \in (0,\beta)$
and $\bar \rho\in (0,\rho)$, $\mathcal{S}$ is a well defined, linear
and bounded operator from $\mathcal{X}_{\ell,1+d}\times
\mathcal{X}_{\ell',d'}$ to $\mathcal{X}_{\ell-M+1,1+d} \times
\mathcal{X}_{\ell',d'}$. In addition,
$$
\| \mathcal{S}^{x,y}(T^{x,y})\|_{\ell-M+1} \leq   \frac{2}{c \alpha(N-M)\nu} \| T^{x,y}\|_{\ell},\qquad \|\mathcal{S}^{z} (T^z)\|_{\ell'} \leq
\|C^{-1}\| \| T^z \|_{\ell'}.
$$
\end{lemma}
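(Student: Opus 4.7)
The plan is to treat each component of $\mathcal{S}$ separately and reduce the $x,y$ components to a scalar integral estimate. The $z$-component is immediate: by the choice of norm on $\C^{d'}$, $\|C^{-1}\|_{d'} < 1$, so
$$\|C^{-1}T^z(t)\|_{d'}\,|t|^{-\ell'} e^{c|t|^{-1/\gamma}} \leq \|C^{-1}\|_{d'}\,\|T^z\|_{\ell'},$$
yielding the $z$-bound. For the $x,y$ components, I would first use the hypothesis $\Spec B_1 \subset \{\Re z > 0\}$ together with $K_e^x(t) = t + \O(t^2)$ to show (for $\rho$ small) that the operator norm of each factor $(\Id + B_1[K_e^x(R^i(t))]^{M-1})^{-1}$ is bounded by $1$, whence the product appearing in the definition of $\mathcal{S}^y$ is also bounded by $1$ uniformly in $i,j,t$. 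Both components then satisfy $\|\mathcal{S}^{x,y}(T^{x,y})(t)\| \leq \sum_{j \geq 0}\|T^{x,y}(R^j(t))\|$. Using Lemma~\ref{controlRj}, $|R^j(t)| \leq u_j := |t|(1+j\nu|t|^{N-1})^{-\alpha}$, and the fact that $w \mapsto w^\ell e^{-cw^{-(N-M)}}$ is monotone increasing on $(0,\bar\rho)$ for $\bar\rho$ small (regardless of the sign of $\ell$, since the exponential factor dominates), reduces the lemma to the scalar estimate
$$\sum_{j \geq 0} u_j^\ell\,e^{-cu_j^{-(N-M)}} \leq \frac{2}{c\alpha(N-M)\nu}\,|t|^{\ell-M+1} e^{-c|t|^{-(N-M)}}.$$

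The heart of the argument is an integral comparison. Since $u_j$ is the value at time $j$ of the continuous flow $\mathcal{R}_\nu^s(u) = u(1+s\nu u^{N-1})^{-\alpha}$ of $\dot u = -\alpha\nu u^N$, and the summand is monotone decreasing in $j$, the series is bounded by the initial term $u^\ell e^{-cu^{-(N-M)}}$ (with $u = |t|$) plus $\int_0^\infty \mathcal{R}_\nu^s(u)^\ell e^{-c\mathcal{R}_\nu^s(u)^{-(N-M)}}\,ds$. The change of variable $w = \mathcal{R}_\nu^s(u)$, giving $ds = -(\alpha\nu w^N)^{-1}\,dw$, converts this integral into
$$\frac{1}{\alpha\nu}\int_0^u w^{\ell-N}\,e^{-cw^{-(N-M)}}\,dw,$$
and the further substitution $\tau = cw^{-(N-M)}$ reduces it to a tail of the upper incomplete Gamma function evaluated at $x = cu^{-(N-M)}$. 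The asymptotic $\Gamma(a,x) \sim x^{a-1}e^{-x}$ as $x \to \infty$ then gives
$$\frac{1}{\alpha\nu}\int_0^u w^{\ell-N}e^{-cw^{-(N-M)}}\,dw = \frac{1+o(1)}{c\alpha(N-M)\nu}\,u^{\ell-M+1}e^{-cu^{-(N-M)}}$$
as $u \to 0^+$. Because $M \geq 2$ and $u < 1$, the initial term $u^\ell e^{-cu^{-(N-M)}}$ is dominated by $u^{\ell-M+1}e^{-cu^{-(N-M)}}$, and shrinking $\bar\rho$ absorbs the $1+o(1)$ into a factor~$2$, yielding the stated constant.

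The main obstacle is that the integral computation must recover exactly the exponential weight $e^{-c|t|^{-1/\gamma}}$ with the same constant $c$ as in the source norm, so that $\mathcal{S}(T)$ genuinely belongs to the target Banach space; this sharp matching is precisely what the incomplete Gamma asymptotic provides, and is also the source of the gain of $M-1$ powers of $|t|$ (the shift from $\ell$ to $\ell-M+1$) via the interplay between the $w^{\ell-N}$ factor and the exponent $-(N-M)$. Regularity of $\mathcal{S}(T)$ (real-analyticity on $S_1$ and continuity on $\bar{S}_1 \cup \{0\}$) then follows routinely from the uniform convergence of the series on compact subsets of $\bar{S}_1$ guaranteed by the same decay estimate, together with the fact that $R$ maps $\bar{S}_1$ into itself after slightly shrinking the sector.
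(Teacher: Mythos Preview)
Your proof is correct and follows essentially the same approach as the paper's: both bound the product in $\mathcal{S}^y$ by~$1$, reduce to the scalar sum $\sum_{j\ge 0} u_j^{\ell}\,e^{-c u_j^{-(N-M)}}$, compare with an integral, and evaluate that integral by a change of variables. The only cosmetic difference is in the last step: the paper performs the substitution $v=(1+x\nu|t|^{N-1})^{\alpha(N-M)}$ and then a direct integration by parts to obtain a recursive inequality $I(t)\le \frac{1}{c\alpha(N-M)\nu|t|^{M-1}}e^{-c|t|^{-(N-M)}}+\frac{|\ell-M+1|}{N-M}|t|\,I(t)$, whereas you change variables to $w=\mathcal{R}_\nu^s(u)$ and invoke the asymptotic $\Gamma(a,x)\sim x^{a-1}e^{-x}$ of the upper incomplete Gamma function---but these are equivalent, since that asymptotic is itself proved by the same integration by parts.
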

\begin{proof}
Let $T$ be a function belonging to $\mathcal{X}_{\ell,1+d+d'}$.
Since $\mathcal{S}^z (T^z) = C^{-1} T^z$, the claim is clear.
We have that
\begin{align*}
\|\mathcal{S}^{x,y}&(T^{x,y})(t) \|\leq \sum_{j\geq 0} \|T^{x,y}(R^j(t))\| \leq  \sum_{j\geq 0} \|T^{x,y}\|_{\ell} |R^j(t)|^{\ell} \text{exp}\left (-c |R^j(t)|^{-1/\gamma}\right ) \\
&\leq \sum_{j\geq 0} \frac{ \|T^{x,y}\|_{\ell} |t|^{\ell}}{\big (1+j \nu |t|^{N-1}\big)^{\alpha \ell}}\text{exp}
\left (-\frac{c}{|t|^{1/\gamma}} (1+j\nu |t|^{N-1}\big)^{\alpha/\gamma}\right )\\
&\leq \|T^{x,y}\|_{\ell} |t|^{\ell} \left ( e^{-c|t|^{-(N-M)}} + \int_{0}^{\infty}
\frac{e^{ -\frac{c}{|t|^{(N-M)}} \left(1+x\nu |t|^{N-1}\right)^{\alpha(N-M)}}}{{\big (1+x\nu |t|^{N-1}\big)^{\alpha \ell}}}\, dx\right ).
\end{align*}
Let $I(t)$ be the integral in the right hand side of the last
inequality. By performing the change of variables
\begin{align*}
v & =\big (1+x\nu |t|^{N-1}\big)^{\alpha (N-M)} ,
\\ dv &=  \alpha(N-M)\nu |t|^{N-1}
\big (1+x\nu |t|^{N-1}\big)^{\alpha (N-M)-1} dx\\ &= \alpha(N-M)\nu|t|^{N-1} v^{-(M-1)/(N-M)}dx
\end{align*}
we have that
\begin{align*}
I(t) &= \frac{1}{\alpha (N-M) \nu |t|^{N-1} }\int_{1}^{\infty} e^{-\frac{c}{|t|^{N-M}} v} v^{-(\ell-M+1)/(N-M)}\, dv \\
&= \frac{|t|^{-(\ell-M+1)/(N-M)}}{\alpha(N-M) \nu |t|^{M-1}} \int_{|t|^{-(N-M)}}^{\infty} e^{-c w} w^{-(\ell-M+1)/(N-M)}\, dw.
\end{align*}
Integrating by parts we easily obtain
$$
I(t) \leq \frac{1}{c \alpha(N-M) \nu |t|^{M-1}} e^{-\frac{c}{|t|^{N-M}}} + \frac{|\ell -M+1|}{N-M} |t| I(t)
$$
and the claim is proven since we can take $|t|<\rho$ small enough.
\end{proof}

It is clear that $E\in \mathcal{X}_{0,1+d+d'}$ for any $0<\bar \rho<\rho$ and $0<\bar \beta < \beta < \alpha \pi$.
By Lemma~\ref{linopS}, $\mathcal{S}(E) \in \mathcal{X}_{-M+1,1+d} \times \mathcal{X}_{0,d'}
\subset \mathcal{X}_{-M+1,1+d+d'}$.
We introduce
$$
\varrho = 2  \Vert \mathcal{S}(E)\Vert_{-M+1},\qquad D= \frac{2}{c \alpha(N-M) \nu}.
$$

\begin{lemma} Let $0<\beta<\alpha \pi$, $0<\nu < a(N-1) \cos \lambda$ and $\rho$ small enough such that
the conclusions of Lemmas~\ref{controlRj} and~\ref{linopS} hold true.
For any $0<\bar \beta <\beta$ and $0<\bar \rho <\rho$, we introduce
$\mathcal{B}_{-M+1}(\varrho)\subset \mathcal{X}_{-M+1,1+d+d'}$ the closed ball of radius $\varrho$.

Then, if $\rho$ is small enough, the fixed point equation~\eqref{fpeM<N}, $H=\mathcal{F}(H)$, has a unique solution $H\in \mathcal{B}_{-M+1} (\varrho)$.
\end{lemma}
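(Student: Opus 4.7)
The plan is to invoke the Banach fixed point theorem for $\mathcal{F}$ on the closed ball $\mathcal{B}_{-M+1}(\varrho)\subset \mathcal{X}_{-M+1,1+d+d'}$, so I would verify two properties: $\mathcal{F}(\mathcal{B}_{-M+1}(\varrho))\subset \mathcal{B}_{-M+1}(\varrho)$ and that $\mathcal{F}$ is a strict contraction. Write $\mathcal{F}(H)=-\mathcal{S}(E)-\mathcal{S}(\mathcal{N}(H))-\mathcal{S}((0,0,H^z\circ R)^\top)$. The first piece contributes exactly $\varrho/2$ by the definition of $\varrho$, so it suffices to bound the other two by~$\varrho/2$ uniformly on the ball.

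For the nonlinear piece $\mathcal{S}(\mathcal{N}(H))$ I would use Lemma~\ref{linopS}: $\mathcal{S}^{x,y}$ maps $\mathcal{X}_{0,1+d}$ into $\mathcal{X}_{-M+1,1+d}$ with constant $D$, while $\mathcal{S}^z$ acts on $\mathcal{X}_{-M+1,d'}$ with constant $\|C^{-1}\|<1$. Using~\eqref{boundN} together with the pointwise bound $\|H(t)\|\le \varrho|t|^{-M+1}\exp(-c|t|^{-1/\gamma})$ on $\bar S_1$: the quadratic terms yield $O(\varrho^2)$ (the super-exponential decay $\exp(-c|t|^{-1/\gamma})$ dominates the polynomial blow-up $|t|^{-2(M-1)}$), the terms carrying an extra factor $|t|$ or $|t|^{N-M}$ yield $O(\bar\rho\,\varrho)$ and $O(\bar\rho^{\,N-M}\varrho)$, and the $\|B_2\|$-term is made small by the preliminary scaling of the $z$-variable already invoked after~\eqref{boundN}. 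Collecting, $\|\mathcal{S}(\mathcal{N}(H))\|_{-M+1}\le \eta\,\varrho$ with $\eta$ arbitrarily small once $\bar\rho$, $\varrho$ and $\|B_2\|$ are chosen small enough.

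The delicate piece is $\mathcal{S}((0,0,H^z\circ R)^\top)=(0,0,C^{-1}(H^z\circ R))^\top$. By Lemma~\ref{controlRj} and the definition of the weighted norm,
$$
\|H^z\circ R\|_{-M+1}\le K_0\,\|H^z\|_{-M+1},\qquad K_0:=\sup_{t\in \bar S_1}\Bigl(\tfrac{|t|}{|R(t)|}\Bigr)^{M-1}\exp\!\bigl(c|t|^{-(N-M)}-c|R(t)|^{-(N-M)}\bigr).
$$
Plugging $|R(t)|\le |t|(1+\nu|t|^{N-1})^{-\alpha}$ and expanding for small $|t|$ (recalling $\alpha/\gamma=(N-M)/(N-1)$), the exponential factor behaves like $\exp(-c\tfrac{(N-M)\nu}{N-1}|t|^{M-1}+\cdots)$ while the polynomial prefactor is $1+O(|t|^{N-1})$; since $M-1<N-1$, the negative exponent dominates, so $K(t)\le 1$ for $|t|$ small and $K_0\to 1$ as $\bar\rho\to 0$. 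Combined with $\|C^{-1}\|<1$ (guaranteed by the chosen norm on $\mathbb{C}^{d'}$), this gives $\|C^{-1}\|K_0<1$ for $\bar\rho$ sufficiently small, so this term acts as a strict contraction in the $z$-component.

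For the contraction estimate I would repeat the argument using the Lipschitz bounds in~\eqref{boundN}, which have exactly the same form as the bounds on $\mathcal{N}(H)$ with $H$ replaced by $\Delta H=H_1-H_2$ (and, on the quadratic part, with an extra factor $\|H_1\|+\|H_2\|\le 2\varrho$). Summing all contributions produces a Lipschitz constant of the form $\|C^{-1}\|K_0+O(\bar\rho+\varrho+\|B_2\|)$, which is strictly less than~$1$ for the parameters chosen above, so Banach's theorem yields the unique fixed point $H\in\mathcal{B}_{-M+1}(\varrho)$. The main obstacle is precisely the quantitative control of~$K_0$: unlike the $M\ge N$ case where $H\circ R$ contracts by the exponentially small factor~\eqref{HcompR}, here $H^z\circ R$ is at best neutral in the $\|\cdot\|_{-M+1}$ norm, and contraction can only be recovered from the uniform hyperbolicity $\|C^{-1}\|<1$ coming from the spectral assumption on $C$.
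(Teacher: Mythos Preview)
Your approach is essentially the paper's: Banach fixed point on the ball $\mathcal{B}_{-M+1}(\varrho)$, splitting $\mathcal{F}(H)$ into the $\mathcal{S}(E)$ piece, the nonlinear piece, and the $z$-shift $C^{-1}(H^z\circ R)$, then controlling the latter two via Lemma~\ref{linopS} and~\eqref{boundN}. Your explicit analysis of $K_0$ is in fact an improvement over the paper, which simply writes $\|\mathcal{S}^z(H^z\circ R)\|_{-M+1}\le\|C^{-1}\|\|H^z\|_{-M+1}$ without justification; your expansion showing that the exponential gain $\exp\bigl(-c\tfrac{(N-M)\nu}{N-1}|t|^{M-1}+\cdots\bigr)$ dominates the polynomial loss $(|t|/|R(t)|)^{M-1}=1+O(|t|^{N-1})$ (since $M-1<N-1$) is exactly what is needed to see $K_0\le 1$.

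Two small remarks. First, $\varrho=2\|\mathcal{S}(E)\|_{-M+1}$ is \emph{fixed}, not a free smallness parameter; but this does not matter, since the quadratic term in~\eqref{boundN} carries an extra factor $|t|^{-(M-1)}e^{-c|t|^{-(N-M)}}$ that is made small by shrinking $\bar\rho$ alone, irrespective of $\varrho$. Second, there is an arithmetic gap in the invariance step: you announce that ``it suffices to bound the other two by $\varrho/2$'', but then only obtain $\|C^{-1}\|K_0+\eta<1$ for their combined contribution, which is enough for contraction but not for mapping the ball of radius \emph{exactly} $\varrho$ into itself (that would need $\|C^{-1}\|K_0+\eta\le\tfrac12$, hence essentially $\|C^{-1}\|<\tfrac12$, which is not assumed). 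The paper's own proof has the very same slip---it actually drops the $\mathcal{S}(E)$ term entirely when bounding $\|\mathcal{F}(H)\|$. The clean fix is to replace $\varrho$ by $\varrho_*=\|\mathcal{S}(E)\|_{-M+1}/(1-L)$, where $L=\|C^{-1}\|K_0+\eta<1$ is the Lipschitz constant; then $\|\mathcal{F}(H)\|\le(1-L)\varrho_*+L\varrho_*=\varrho_*$ and Banach applies on $\mathcal{B}_{-M+1}(\varrho_*)$.
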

\begin{proof}  Let $H\in \mathcal{B}_{-M+1}(\varrho)$. We notice again that, by means of a scaling in the $z$-variable, $\|B_2\|$ is small
enough. In addition, since $N>M$, $|t|\geq |t|^{N-M}$.
By using bound~\eqref{boundN} of $\| \mathcal{N}(H)\|$, we have that, for $|t|\leq \bar \rho<\rho$,
\begin{align*}
\| \mathcal{N}^{x,y}(H) \|_0 &\leq  \left (\varrho a_1 |t|^{-(M-1)} e^{-c|t|^{-(N-M)}} + (a_2+a_3) |t|+a_4 \|B_2 \| \right )\| H \|_{-M+1}
\\ & \leq (\rho^{1/2} + a_4 \|B_2 \|) \| H \|_{-M+1}, \\
\| \mathcal{N}^{z}(H)\|_{-M+1} & \leq \left (\varrho b_1 |t|^{-(M-1)} e^{-c|t|^{-(N-M)}} +b_2|t|\right )\| H \|_{-M+1} \leq \rho^{1/2} \| H \|_{-M+1}
\end{align*}
if $\rho$ is small enough.
Hence, by Lemma~\ref{linopS}
$$
\|\mathcal{S}(\mathcal{N}(H))\|_{-M+1} \leq \max \left \{ D , \|C^{-1} \| \right \}
(\rho^{1/2} + a_4 \|B_2 \|) \| H \|_{-M+1}.
$$
We take $\rho$ and $\|B_2\|$ small enough, such that
\begin{equation}\label{rhoB2small}
\max \left \{ D , \|C^{-1} \| \right \}
(\rho^{1/2} + a_4 \|B_2 \|) \leq \frac{1}{2} (1-\|C^{-1}\|).
\end{equation}
Moreover, since $\|\mathcal{S}^z (H^z \circ R)\|_{M+1} \leq \|C^{-1} \| \|H^z \|_{-M+1}$,
\begin{align*}
\|\mathcal{F}(H)\| & \leq \|\mathcal{S}(\mathcal{N}(H))\|_{-M+1} + \|\mathcal{S}^z (H^z \circ R)\|_{-M+1} \\
&\leq \frac{1}{2}(1+ \|C^{-1}\|)  \| H \|_{-M+1} <  \| H \|_{-M+1}.
\end{align*}
We have proven that $\mathcal{F}(\mathcal{B}_{-M+1}(\varrho) )\subset \mathcal{B}_{-M+1}(\varrho)$.

Now we check that $\mathcal{F}$ is contractive. Indeed, let  $H_1,H_2\in \mathcal{B}_{-M+1}(\varrho)$ be two functions in $\mathcal{B}_{-M+1}(\varrho)$.
Again using~\eqref{boundN},
$$
\|\mathcal{S}(\mathcal{N}(H_1)) - \mathcal{S}(\mathcal{N}(H_2))\|_{-M+1} \leq \max \left \{ D , \|C^{-1} \| \right \}
(\rho^{1/2} + a_4 \|B_2 \|) \| H_1-H_2\|_{-M+1}
$$
and, since $\|\mathcal{S}^z (H^z_1 \circ R-H^z_2\circ R)\|_{M+1} \leq \|C^{-1} \| \|H^z_1-H_2 \|_{-M+1}$, we obtain
$$
\|\mathcal{F}(H_1)-\mathcal{F}(H_2)\|_{-M+1} \leq \frac{1}{2} (1+\|C^{-1}\|)\| H_1 -H_2\|_{-M+1},
$$
using~\eqref{rhoB2small}.
\end{proof}

We deduce from this lemma that equation~\eqref{eqHfirst} is satisfied for $H \in \mathcal{X}_{-M+1,1+d+d'}$. Therefore, the function $K=K_e+H$ is a solution
of $F\circ K = K\circ R$, analytic in $S(\beta,\rho)$ and with $\hat K$ as its asymptotic $\gamma$-Gevrey series.
This proves Theorem~\ref{mtexistencesolution} for the case $M<N$.

\appendix

\section{Proof of Proposition~\ref{prop:FormaF}}\label{A1}

We write $\varphi = \varphi_{\leq  r} + \varphi_{>r}$ being $\varphi_{\leq r}$ the Taylor decomposition of $\varphi$ up to order $r$.
Note that $\varphi(0)=0$.
We also will use $\mathcal{N}_{\leq r}$ and the notation introduce in Section~\ref{sec:notation}.

We perform the normal form procedure (using the struture of eigenvalues) to assure that $\mathcal{N}^z(x,y,0) =\O(\|(x,y)\|^N)$, the change of variables
$$
(\bar{x},\bar{y},\bar{z})=(x, y - \varphi^y_{\leq r}(x), z- \varphi^z_{\leq  r}(x))
$$
and the blow up
$$
\bar{x} = u ,\qquad \bar{y}= u^m v,\qquad \bar{z} =u^n w
$$
for some $n,m\in \mathbb{N}$ to be determined later.
We obtain the new map $F=(F^u, F^v, F^w)$:
\begin{equation*}
%\label{tildeF}
\begin{aligned}
{F}^u (u,v,w) =& u+\mathcal{N}^x(u, u^{m} v+\varphi_{\leq r}^y (u), u^n w+\varphi_{\leq r}^z (u)), \\
{F}^v (u,v,w)=&\frac{1}{\big ({F}^u (u,v,w)\big )^m} \left [u^m v +\mathcal{N}^y(u, u^{m} v+\varphi_{\leq r}^y (u),
u^n w+\varphi_{\leq r}^z (u))\right. \\  &\left . +\varphi_{\leq r}^y (u) - \varphi_{\leq  r}^y (F^{u}(u,v,w))\right ],\\
{F}^w (u,v,w)=&\frac{1}{\big ({F}^u (u,v,w)\big )^n} \left [u^n w+\mathcal{N}^z(u, u^{m} v+\varphi_{\leq r}^y (u),
u^n w+\varphi_{\leq r}^z (u))\right. \\  &\left . +\varphi_{\leq r}^z (u) -\varphi_{\leq  r}^z (F^{u}(u,v,w))\right ].
\end{aligned}
\end{equation*}
We have that $F$ is a real analytic map and that
$$
{F}^u (u,v,w) = u +\mathcal{N}^x_{\leq r}(u,u^m v +\varphi_{\leq r}^y(u),u^n w+\varphi_{\leq r}^z(u)) + \o(|u|^r),
$$
where $\mathcal{N}_x^{\leq r}$ is a polynomial of degree $r$. Then,
$$
F^u(u,v,w) = u + \mathcal{N}^x_{\leq r}(u,\varphi_{\leq r}^y(u), \varphi_{\leq r}^z(u)) + \O(|u|^{m+1}\|v\|, |u|^{n+1}\|w\|) +
\o(|u|^r).
$$
In addition, since
\begin{equation*}
%\label{Nxr}
\mathcal{N}^x_{\leq r}(u,\varphi_{\leq r}^y(u), \varphi_{\leq r}^z(u))-\mathcal{N}(u,\varphi^y(u),\varphi^z(u))=
\o(|u|^{r+1}),
\end{equation*}
taking $m+2\geq N$ and $n+2\geq N$ and using~\eqref{hipprop}
$$
F^{u}(u,v,w) = u-f^u(u,v,w) u:= u- \big(au^{N-1} + \O(|u|^N) + \O(|u|^{m}\|v\|, |u|^{n}\|w\|)\big )u.
$$
Therefore, $F^u$ satisfies the form of the first component
in~\eqref{system}.

Now we deal with $F^v$. We first note that, since $\varphi$ is invariant, we have that
$$
\varphi_{\leq r}(u) + \mathcal{N}^y(u,\varphi^y_{\leq r}(u),\varphi^z_{\leq r}(u))-\varphi_{\leq r}^y(F^u(u,0,0))
=\o(|u|^r).
$$
Secondly, we observe that, using the mean's value theorem
$$
F^v(u,v,w) = \frac{v}{(1+f^u(u,v,w))^m} + \bar{B}_1(u,v,w) v + u^{n-m} \bar{B}_2(u,v,w) w + \o(\|u|^{r-m}),
$$
$\bar{B}_1, \bar{B}_2$ being matrices with every entry of order at least $\O(\|(u,u^m v,u^n w)\|)$.
Note that,
\begin{align*}
\frac{v}{(1+f^u(u,v,w))^m} =& v + m a u^{N-1} v + \O(|u|^N \|v\|)+ \mathcal{O}(|u|^m\|v\|^2)+ \mathcal{O}(|u|^n\|w\|^2),\\
\bar{B}_1(u,u^m v,u^n w) v =& \bar{B}_1(u,0,0) v + \mathcal{O}(|u|^m\|v\|^2)+ \mathcal{O}(|u|^n\|w\|^2), \\
\bar{B}_2(u,u^m v,u^n w) w =& \bar{B}_2(u,0,0) w + \mathcal{O}(|u|^m\|v\|^2)+ \mathcal{O}(|u|^n\|w\|^2).
\end{align*}
Let $M_1,M_2 \in \mathbb{N}$, $M_1,M_2\geq 2$ and $\hat B_1, \hat B_2$ real matrices be such that $\bar{B}_i(u,0,0)=\hat B_i u^{M_i-1} + \O(|u|^{M_i})$, $i=1,2$.
Eventually, $\hat B_1, \hat B_2$ can be the zero matrix if either $\bar B_1(u,0,0)$ or $\bar B_2(u,0,0)$ are flat at $u=0$. In this case one can take
either $M_1\geq N $ or $M_2\geq N$.

We have then that
\begin{align*}
F^v(u,v,w) = &v+ \big (ma u^{N-1}\Id + u^{M_1-1} \hat B_1)v + u^{M_2-1} \hat B_2 w +\O(|u|^{M_1} \|v\|) \\ &+\O(|u|^{M_2} \|w\|) +
\O(|u|^N \|v\|) + \mathcal{O}(|u|^m\|v\|^2)+ \mathcal{O}(|u|^n\|w\|^2) + \o(\|u|^{r-m})
\end{align*}
The result follows with $M=\min\{M_1,N\}$ taking $m+2\geq \max\{M_1,N\}$, $n=m+\max\{0,M-M_2\}\geq m$ and $B_1$ and $B_2$ adequately.

Finally we deal with $F^w$. We recall that $\mathcal{N}^z(x,y,0) =\O(\|(x,y)\|^N)$ and we notice that $\varphi^{z}(x) = \O(|x|^N)$. Indeed, $\varphi^z$ satisfies
$$
C\varphi^{z}(x) + \mathcal{N}^z(x,\varphi^y(x),\varphi^z(x)) =\varphi^z(x + \mathcal{N}^x(x,\varphi^y(x),\varphi^z(x))
$$
or, taking into account condition~\eqref{hipprop},
\begin{align*}
\left (C - \Id + \int_{0}^1 \partial_z \mathcal{N}^z(x,\varphi^y(x),\lambda \varphi^z(x)) \, d \lambda \right )\varphi^z(x)  =
& - \mathcal{N}^z (x,\varphi^y(x),0)-\varphi^{z}(x)  \\
&+\varphi^z(x + \mathcal{N}^x(x,\varphi^y(x),\varphi^z(x))) \\
=& \mathcal{O}(|x|^N) .
\end{align*}
Since the matrix in the left hand side is invertible if $|x|$ is small enough, $\varphi^z(x)=\O(|x|)^N$.

Performing analogous computations as the ones for $F^v$ and taking into account that $\varphi^z_{\leq r} (x)=\O(|x|^N)$ and that $\mathcal{N}^z(x,y,0) =\O(\|(x,y)\|^N)$ one obtains that
$$
F^w(u,v,w) = Cw+\hat{C}_1 u^{N+m-n-1} v + \O(\|u,v,w\|^2)= Cw +  \O(\|u,v,w\|^2)
$$
and the proof is complete since $N+m-n-1\geq 1$.

\begin{remark}
As a consequence of the proof, Proposition~\ref{prop:FormaF} holds
true if $\mathcal{F} \in \mathcal{C}^{r}$ with $r$ big enough
(including the $\mathcal{C}^{\infty}$ case). In addition the map of
the form~\eqref{system} is also $\mathcal{C}^r$.
\end{remark}

\section{Proof of Proposition~\ref{prop:trickylowerbounds}}
\label{proofofpropositionprop:trickylowerbounds}

Since $\text{graph}\, \hat \varphi$ is the formal invariant manifold
of $F$, it satisfies
\begin{equation}
\label{eq:inveqgraph} \hat \varphi(F^x(x,\hat \varphi(x))) - F^{y,z}
(x,\hat \varphi(x)) = 0.
\end{equation}
Let $\tilde g(x) = \varphi_{\le p} (F^x(x,\varphi_{\le p} (x))) -
F^{y,z} (x,\varphi_{\le p} (x))$. We claim that $\tilde g(x) =
\O(\|x\|^{p+1})$. Indeed, using that $\varphi_1 = 0$ (since
$\text{graph}\, \hat \varphi$ is tangent to the $x$-direction), we
have that
\[
\begin{aligned}
\hat  \varphi(F^x(x,\hat \varphi(x)))  = & \varphi_{\le p}
(F^x(x,\varphi_{\le p}
 (x))) + [\varphi_{\le p}(F^x(x,\hat \varphi(x)))-\varphi_{\le p} (F^x(x,\varphi_{\le p}
 (x)))] \\
  & + \varphi_{>p} (F^x(x,\hat \varphi (x))) \\
  = & \varphi_{\le p} (F^x(x,\varphi_{\le p}(x))) + \O(\|x\|^{N+p})
  + \O(\|x\|^{p+1})
 \end{aligned}
\]
and
\[
\begin{aligned}
F^{y,z} (x,\hat \varphi(x)) = & F^{y,z} (x,\varphi_{\le p} (x)) +
[F^{y,z}
(x,\hat \varphi(x))- F^{y,z} (x,\varphi_{\le p} (x))] \\
 = & F^{y,z} (x,\varphi_{\le p} (x)) + \O(\|x\|^{p+1}).
\end{aligned}
\]
Hence, taking into account~\eqref{eq:inveqgraph}, the claim follows.

We define $G$ by $G^x = F^x$ and $G^{y,z} = F^{y,z} + \tilde g$.
Clearly, $F(x,y,z) - G(x,y,z) = \O(\|(x,y,z)\|^{p+1})$. We only need
to check that $\text{graph}\, \varphi_{\le p}$ is invariant by~$G$,
which is true since, by the definition of $\tilde g$ and $G$,
\begin{align*}
\varphi_{\le p} (G^x(x,\varphi_{\le p}(x))) &= \varphi_{\le p}
(F^x(x,\varphi_{\le p}(x)))
= F^{y,z} (x,\varphi_{\le p} (x)) + \tilde g(x) \\ &= G^{y,z}
(x,\varphi_{\le p} (x)),
\end{align*}
which concludes the proof.

%%%%%%%%%%%%%%%%%%%%%%%%%%%%%%%%%%%%%%%%%%%%%%%%%%%%%%%%%%%%%%%%%%%%%%%%%%%%%%%%%%%%%%%%%%%%%%%%%%%%%%%%%%%
\subsection*{Acknowledgments}
I.B and P.M. have been partially supported by the Spanish Government
MINECO-FEDER grant MTM2015-65715-P and the Catalan Government grant
2014SGR504. The work of E.F. has been partially supported by the
Spanish Government grant MTM2013-41168P and the Catalan Government
grant 2014SGR-1145.

\bibliography{references}
\bibliographystyle{alpha}

\end{document}